\DeclareMathOperator{\GL}{GL}
\DeclareMathOperator{\PGL}{PGL}
\DeclareMathOperator{\SL}{SL}
\DeclareMathOperator{\Sp}{Sp}
\DeclareMathOperator{\Spin}{Spin}
\DeclareMathOperator{\DD}{\mathcal D}
\DeclareMathOperator{\HH}{H}
\DeclareMathOperator{\HZ}{Z}
\DeclareMathOperator{\MM}{M}
\DeclareMathOperator{\RR}{\mathcal R}
\DeclareMathOperator{\Lie}{Lie}
\DeclareMathOperator{\Cent}{Cent}
\DeclareMathOperator{\Cocent}{Cocent}
\DeclareMathOperator{\Spec}{Spec}
\DeclareMathOperator{\Gm}{{\mathbf G}_m}
\DeclareMathOperator{\Ker}{Ker\,}
\DeclareMathOperator{\Aut}{Aut\,}
\DeclareMathOperator{\Hom}{Hom\,}
\DeclareMathOperator{\Isomext}{Isomext\,}
\DeclareMathOperator{\Isomint}{Isomint\,}
\DeclareMathOperator{\Dyn}{Dyn}
\DeclareMathOperator{\Br}{Br}
\DeclareMathOperator{\Pic}{Pic}
\DeclareMathOperator{\ind}{ind}
\DeclareMathOperator{\cores}{cores}
\DeclareMathOperator{\tf}{\mathfrak{t}}
\DeclareMathOperator{\La}{\Lambda}
\DeclareMathOperator{\Lar}{\Lambda_r}
\DeclareMathOperator{\cd}{cd}
\DeclareMathOperator{\ZZ}{{\mathbb Z}}
\DeclareMathOperator{\QQ}{{\mathbb Q}}
\DeclareMathOperator{\CC}{{\mathbb C}}
\DeclareMathOperator{\UF}{\mathfrak{U}}
\theoremstyle{plain}
\newtheorem{lem}{Lemma}
\newtheorem*{thm*}{Theorem}
\newtheorem{thm}{Theorem}
\newtheorem{prop}{Proposition}
\newtheorem*{cor*}{Corollary}
\begin{document}

\title{Tits indices over semilocal rings}

\thanks{The first author is partially supported by PIMS fellowship, RFBR~08-01-00756,
RFBR~09-01-90304 and RFBR~09-01-91333. Both authors are supported by RFBR~09-01-00878.}
\author{V. Petrov}
\address{Max-Planck-Institut f\"ur Mathematik, Bonn, Germany}\email{victorapetrov@googlemail.com}
\author{A. Stavrova}\address{St. Petersburg State University, St. Petersburg, Russia\\
and Mathematisches Institut, Ludwig-Maximilians Universit\"at, M\"unchen, Germany}
\email{a\_stavrova@mail.ru}

\maketitle

\begin{abstract}
We give a simplified proof of Tits' classification of semisimple
algebraic groups that remains valid over semilocal rings.
In particular, we provide explicit necessary and sufficient
conditions that anisotropic groups of a given type appear as
anisotropic kernels of semisimple groups of a given Tits index.
We also give a new proof of the existence of all indices of exceptional inner type using the notion
of canonical dimension of projective homogeneous varieties.
\end{abstract}



\section{Introduction}

In his famous paper \cite{Tits66} Jacques Tits showed that any semisimple group $G$ over a field is
determined by its anisotropic kernel and a combinatorial datum called the \emph{Tits index} of $G$.
Some arguments were sketched or omitted there, and appeared in later papers. Namely, Selbach~\cite{Selb}
clarified the proof of the completeness of the list of Tits indices, and in~\cite{Tits90} Tits himself
has finished the proof of the existence of all indices; see also~\cite{Spr} and~\cite[Appendix]{TitsWeiss}
for more detailed expositions.

The goal of
the present paper is to show that the Tits classification carries over to arbitrary connected semilocal rings.
We do not make use of the field case, but rather provide a shortened and simplified version
of Tits' arguments. We also give a new proof of the existence of all indices of inner exceptional type
using the notion of canonical dimension of projective homogeneous varieties of semimple algebraic groups.

Our proof of Tits classification consists of two parts: combinatorial and re\-presen\-tation-theoretic.
Combinatorial restrictions
follow from the presence of the opposition involution on the extended Dynkin diagram.
These restrictions allow to exclude most of the ``wrong''
indices (Proposition~\ref{prop:listcomb}). Representation-theoretic arguments
allow to define Tits algebras of a semisimple group in the same fashion as this was done by Tits~\cite{Tits71}
over fields (Theorem~\ref{thm:algtits}). In Theorem~\ref{thm:levi} we give
a necessary and sufficient condition in terms of Tits algebras that a semisimple group scheme $H$
can be embedded into a larger semisimple group $G$ as the derived subgroup of a Levi subgroup of
a fixed parabolic subgroup of $G$.
Combining this result with the combinatorial restrictions, we obtain the list of all possible indices,
and show that the existence of a group with a given index is equivalent to the existence of an anisotropic group
(its anisotropic kernel) subject to certain explicitly stated conditions (\S~\ref{sec:ind}, Theorem~3).
In the field case, these conditions appeared earlier in~\cite{Selb,Tits90,Gar,TitsWeiss} for all Tits indices
except those where the $*$-action of the Galois group on the set of ``circled''
vertices is non-trivial.

Our proof of the existence of all indices of inner exceptional type (Theorem~\ref{th:ex})
is based on the knowledge of the list of maximal possible values of the $p$-relative canonical dimension of
the variety of Borel subgroups of simple algebraic groups corresponding to generic torsors. This list was
obtained in~\cite{PSZ} by means of the J-invariant of algebraic groups.

\section{Semisimple group schemes}

In this section we reproduce some definitions and results of~\cite{SGA}. Throughout the paper,
all references starting with Exp.~YZ are to this source.

Let $S$ be a scheme (not necessarily separated). A group scheme $G$ over $S$ is called \emph{reductive}
if it is affine and smooth over $S$, and its geometric fibers $G_{\overline{k(s)}}$ are connected
reductive groups
in the usual sense for all $s\in S$ (Exp.~XIX D\'ef.~4.7).
When $S$ is reduced, the smoothness can be replaced by
the condition that $G$ is finitely presented over $S$ and the dimension of a fiber is locally constant
(see Exp.~$\mathrm{VI_B}$, Cor.~4.4). The \emph{type} of $G$ at $s\in S$ is the root datum of $G_{\overline{k(s)}}$.
The type is locally constant (Exp.~XXII Prop.~2.8). To simplify the exposition,
in the sequel we consider
reductive group schemes of constant type only. Thus the type of a reductive group scheme $G$ is a root datum
$\RR=(\Phi,\Lambda,\Phi^*,\Lambda^*)$, where $\Phi$ is a root system, called
the {\it root system} of $G$, $\Lambda$ is a $\ZZ$-lattice
containing $\Phi$, called the {\it lattice of weights} of $G$, and $\Phi^*$ and $\Lambda^*$ are the dual
objects (Exp. XXI D\'ef. 1.1.1). A reductive group $G$ is {\it semisimple}, if the rank of $\Phi$ equals that of $\Lambda$.
We also usually include in the type a fixed subset of positive roots $\Phi^+$ in $\Phi$,
which determines a system of simple roots of $\Phi$ and, therefore, a Dynkin diagram $D$.

Over any scheme $S$ there exists a unique \emph{split} group scheme $G_0$ of a given type $\RR$, which actually comes from
a group scheme over $\Spec\ZZ$ known as the Chevalley -- Demazure group scheme
(Exp.~XXV Thm.~1.1). \emph{Quasi-split} group schemes over $S$ of the same type as $G_0$ are parametrized by
$\HH^1(S,\,\Aut(\RR,\,\Phi^+))$, where $\Aut(\RR,\,\Phi^+)$ is the group of automorphisms of $\RR$ preserving
$\Phi^+$ (cf. Exp.~XXIV Thm.~3.11).
All cohomology groups we consider are with respect to the fpqc
topology (but note that $\HH^1(S,\,H)=\HH^1_{\text{\'et}}(S,\,H)$ when $H$ is smooth).

Every semisimple group scheme $G$ is an inner twisted form of a uniquely determined quasi-split group $G_{qs}$,
given by a cocycle $\xi\in\HZ^1(S,\,G_{qs}^{ad})$, where $G_{qs}^{ad}$ is the adjoint group acting on $G_{qs}$ by
inner automorphisms. Cocycles in the same class in $\HH^1(S,\,G_{qs}^{ad})$ produce isomorphic group schemes
(Exp.~XXIV 3.12.1); however, distinct classes of cocycles may correspond to the same isomorphism class of groups
(see below).

A Dynkin diagram $D$ is nothing but a finite set of vertices together with a subset $E\subseteq D\times D$
of edges and a length function $D\to\{1,\,2,\,3\}$ (in other words, a colored graph). The scheme-theoretic
counterpart of this notion is called a \emph{Dynkin scheme} (Exp.~XXIV \S~3). So a Dynkin scheme
over $S$ is a twisted finite scheme $\DD$ over $S$ together with a subscheme $\mathcal{E}\subseteq\DD\times_S\DD$
and a map $\DD\to\{1,\,2,\,3\}_S$. Isomorphisms, base extensions and constant Dynkin schemes are
defined in an obvious way. We denote by $D_S$ the constant Dynkin scheme over $S$
corresponding to a Dynkin diagram $D$. By $\Aut(\DD)$ we always mean the scheme of automorphisms of $\DD$ over $S$
as a Dynkin scheme; it is a twisted constant group scheme over $S$.

To any semisimple group scheme $G$ one associates the Dynkin scheme $\Dyn(G)$. For a quasi-split group $\Dyn(G_{qs})$
is a twisted form of $D_S$ corresponding to the image in $\HZ^1(S,\,\Aut(D))$ of a cocycle
$\xi\in\HZ^1(S,\,\Aut(\RR,\,\Phi^+))$ defining $G_{qs}$ under the map induced by the canonical map
$\Aut(\RR,\,\Phi^+)\to\Aut(D)$ (Exp.~XXIV 3.7). When $G_{qs}$ is simply connected or adjoint, the latter map is an
isomorphism.

In general, $\Dyn(G)$ is isomorphic to $\Dyn(G_{qs})$, but the isomorphism is not canonical. By an \emph{orientation}
$u$ on $G$ we mean a choice of an element $u\in\Isomext(G_{qs},\,G)(S)$, that is of an isomorhism between
$\Dyn(G_{qs})$ and $\Dyn(G)$. A notion of an isomorphism of oriented group schemes is defined obviously. Exp.~XXIV Rem.~1.11
shows that $\HH^1(S,\,G_{qs}^{ad})$ is in bijective correspondence with the set of isomorphism classes of
\emph{oriented} inner twisted forms of $G_{qs}^{ad}$.

Let $T/S$ be a Galois covering that splits $\Dyn(G)$, i.e. $\Dyn(G)_T\simeq D_T$. For example,
one can take as $T$ the torsor corresponding to the cocycle in $\HZ^1(S,\,\Aut(D))$.
Every element $\sigma\in\Aut(T/S)$
acts on $\Dyn(G)_T$ and therefore defines some $\varphi_\sigma\in\Aut(D)(T)$ such that the diagram
$$
\xymatrix{
D_T\ar[d]\ar[r]^-{\varphi_\sigma}&D_T\ar[d]\\
T\ar[r]^-\sigma&T
}
$$
commutes. By Galois descent this action (which is called {\it the $*$-action}) completely determines
$\Dyn(G)$. If $S$ is connected, the $*$-action can be considered as an action of $\Aut(T/S)$ on the
Dynkin diagram $D$, and extends by $\QQ$-linearity to the $*$-action on $\Lambda$.

A subgroup scheme $P$ of $G$ is called \emph{parabolic} if it is smooth and $P_{\overline{k(s)}}$ is
a parabolic subgroup of $G_{\overline{k(s)}}$ in the usual sense for every $s\in S$  (Exp.~XXVI D\'ef.~1.1).
To a parabolic subgroup $P$ one can attach the \emph{type} $\tf(P)$ of $P$
which is a clopen subscheme of $\Dyn(G)$
(Exp.~XXVI 3.2). Note that the clopen subschemes of $\Dyn(G)$ are in one-to-one correspondence with the
$*$-invariant clopen subschemes of $D_T$, where $T/S$ is as above.

If $L$ is a Levi part of $P$, we have a canonical map $\Dyn(L)\to\Dyn(G)$ depending only on $L$ and $G$.
In particular, an orientation on $G$ \emph{induces} an orientation on $L$.

\section{Representation-theoretic lemmas}

By a \emph{representation} of a group scheme $G$ over $S$ we mean a homomorphism of algebraic groups
$\rho\colon G\to\GL_1(A)$, where $A$ is an Azumaya algebra (more formally, a sheaf of Azumaya algebras)
over $S$.

Let $G_0$ be a split semisimple group scheme over a scheme $S$, and let
$G_0\to\GL(V)$ be a representation of $G_0$ on a projective module
(more formally, a locally free sheaf of modules) $V$
of finite rank
over $S$.
Fix a maximal split torus $T_0$
of $G_0$ and let $\La$ and $\Lar$ be its lattices of weights and roots respectively.
Then $V$ decomposes into a direct sum $\bigoplus_{\lambda\in\La}V_\lambda$ so that for any scheme
$S'$ over $S$, any $t\in T_0(S')$, and any $v\in V_\lambda(S')$ one has $\rho(t)v=\lambda(t)v$
(Exp.~I Prop.~4.7.3). A character $\lambda$ with $V_\lambda\ne 0$ is called a \emph{weight} of $V$.

The \emph{cocenter} $\Cocent(G)$ of $G$ is the group scheme $\Hom(\Cent(G),\,\Gm)$. When $G$ is split it
can be identified with the constant group scheme $(\La/\Lar)_S$. Descent shows that
$\Cent(G)$ is isomorphic to $\Cent(G_{qs})$, and therefore $\Cocent(G)$ is isomorphic to $\Cocent(G_{qs})$.
The isomorphism depends only on the orientation of $G$.

A representation $\rho\colon G\to\GL_1(A)$ will be called \emph{center preserving} if
$\rho(\Cent(G))\subseteq\Cent(\GL_1(A))$.
In this case $\rho$ induces a homomorphism $\rho^{ad}\colon G^{ad}\to\PGL_1(A)$
and determines an element $\lambda_\rho\in\Cocent(G)(S)$, which is the restriction of
$\rho$ to $\Cent(G)$ composed with the natural isomorphism $\Cent(\GL_1(A))\simeq\Gm$.

\begin{lem}\label{lem:cent}
\begin{enumerate}
\item\label{item:cent} $G\to\GL(V)$ is center preserving if and only if over a splitting covering $\coprod S_\tau\to S$ every two weights of $V$ differ by an element of $\Lar$.
\item\label{item:dual} The dual $G\to\GL(V^*)$ of a center preserving representation $G\to\GL(V)$ is center preserving.
\item\label{item:tensor} The tensor product $G\to\GL(V_1\otimes V_2)$ of center preserving representations $G\to\GL(V_1)$ and $G\to\GL(V_2)$ is center preserving.
\item\label{item:summand} For any representation $\rho\colon G\to\GL(V)$ and an element $\lambda\in\Cocent(G)(S)$,
the submodule $W\subseteq V$ defined by
$$
W(S')=\{v\in V\times_SS'\mid c\cdot v=\lambda(c)v\text{ for all fpqc }S''/S'\text{ and }c\in\Cent(G)(S'')\}
$$
is a $G$-invariant direct summand of $V$. Moreover, the representation $\rho'\colon G\to\GL(W)$
is center preserving and $\lambda_{\rho'}=\lambda$ if $W\ne 0$.
\end{enumerate}
\end{lem}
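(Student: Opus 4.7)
For part~(\ref{item:cent}), I would pass to a splitting covering $T\to S$ over which $G$ becomes split, with a maximal split torus $T_0\subseteq G_T$. The center is then the intersection of kernels of roots inside $T_0$, so a character of $T_0$ is trivial on $\Cent(G_T)$ precisely when it lies in $\Lar$; equivalently, two characters of $T_0$ restrict to the same character of $\Cent(G_T)$ iff their difference lies in $\Lar$. Since $\Cent(G_T)$ acts on the weight space $V_\mu$ by the scalar $\mu|_{\Cent(G_T)}$, the inclusion $\rho_T(\Cent(G_T))\subseteq\Gm$ holds iff all weights of $V_T$ have a common restriction to $\Cent(G_T)$, that is, iff any two of them differ by an element of $\Lar$. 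Fpqc descent then returns the criterion to the original base $S$.

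Parts~(\ref{item:dual}) and~(\ref{item:tensor}) follow immediately from~(\ref{item:cent}): over a splitting covering the weights of $V^*$ are the negatives of the weights of $V$, and the weights of $V_1\otimes V_2$ are sums of a weight of $V_1$ and a weight of $V_2$. In both cases differences of weights remain in $\Lar$.

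For part~(\ref{item:summand}), I would construct $W$ first over a splitting covering and then descend. Over $T$ the class $\lambda\in\Cocent(G)(S)$ determines a coset $\lambda+\Lar\subseteq\La$, and I set $W_T=\bigoplus_{\mu\in\lambda+\Lar}V_\mu$, which is patently a direct summand of $V_T$. For $G_T$-invariance I would use that the split group $G_T$ is generated by $T_0$ and by the root subgroups $U_\alpha$ with $\alpha\in\Phi\subseteq\Lar$: the torus $T_0$ preserves each $V_\mu$, while $U_\alpha$ sends $V_\mu$ into $\bigoplus_{n\geq 0}V_{\mu+n\alpha}$, and adding any multiple of $\alpha\in\Lar$ keeps us in the coset $\lambda+\Lar$. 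The defining condition for $W$ in the statement only involves the action of $\Cent(G)$ and the character $\lambda$, both defined over $S$, so it is compatible with base change and with the $\Aut(T/S)$-action; hence $W_T$ descends to a $G$-stable direct summand $W\subseteq V$ that represents the given subfunctor. That $\rho'$ is center preserving and $\lambda_{\rho'}=\lambda$ is then immediate from the construction.

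The main obstacle is part~(\ref{item:summand}): one must simultaneously establish the direct-summand property (transparent from the weight decomposition) and the $G$-invariance, which depends in an essential way on the fact that root subgroups shift weights by elements of $\Lar$, so that exactly the sum of weight spaces along a single $\Lar$-coset is a $G$-stable subspace. The descent step is straightforward because the defining subfunctor is already written over $S$, but one has to be careful that the coset $\lambda+\Lar\subseteq\La$ picked out by $\lambda$ over $T$ is $\Aut(T/S)$-invariant, which is guaranteed because $\lambda$ is defined over $S$.
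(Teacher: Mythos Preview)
Your treatment of parts~(\ref{item:cent})--(\ref{item:tensor}) matches the paper's proof essentially verbatim.

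For part~(\ref{item:summand}) your argument is correct, but it takes a longer road than the paper's. You deduce $G$-invariance of $W$ from the structure theory of the split group, using that $T_0$ preserves each weight space and that each root subgroup $U_\alpha$ shifts weights by multiples of $\alpha\in\Lar$. The paper bypasses this entirely: since $\Cent(G)$ is central, for any $g\in G(S')$, $v\in W(S')$, and $c\in\Cent(G)(S'')$ one has $c\cdot(gv)=g\cdot(cv)=\lambda(c)\,gv$, so $gv\in W$ tautologically. Thus your remark that $G$-invariance ``depends in an essential way'' on the root-subgroup shifting property is not quite right; that property is sufficient but not needed. Your route has the mild cost that one must argue carefully (e.g.\ via the big cell) that $T_0$ and the $U_\alpha$ generate $G_T$ as a sheaf over a general base, whereas the centrality argument works uniformly with no such care.

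For the direct-summand assertion the paper also proceeds slightly differently: rather than forming $\bigoplus_{\mu\in\lambda+\Lar}V_\mu$ over a splitting cover and descending, it defines the complement $W'$ directly over $S$ as a subsheaf (sections that, fpqc-locally, decompose as sums of eigenvectors for cocenter characters $\neq\lambda$), observes $W'$ is $G$-invariant by the same centrality trick, and then checks $V=W\oplus W'$ over a splitting cover. The two constructions yield the same objects; the paper's has the advantage that both $W$ and $W'$ are written as subfunctors over $S$ from the outset, so no separate descent step is required.
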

\begin{proof}
For \eqref{item:cent} observe that since the condition $\rho(\Cent(G))\subseteq\Cent(\GL(V))$ is
local with respect to fpqc topology, we can assume that $G$ is split.
Then $V$ is center preserving if and only if restrictions of every two weights $\lambda$ and $\mu$
of $V$ to $\Cent(G)$ coincide. This means exactly that $\lambda-\mu$ belongs to $\Lar$ (Exp.~XXII Rem.~4.1.8).
Parts \eqref{item:dual} and \eqref{item:tensor} follow from \eqref{item:cent}.

To prove \eqref{item:summand}, define $W'(S')$ as the set of all $v\in V(S')$ such that there exist an fpqc covering $\coprod S'_\tau\to S'$
and, for each $\tau$, a finite number of elements $\lambda_1,\,\ldots,\,\lambda_k\in\Cocent(G)(S'_\tau)$ distinct from $\lambda$
and elements $v_1,\,\ldots,\,v_k\in V\times_SS'_\tau$ such that $v=v_1+\ldots+v_k$ and $cv_i=\lambda_i(c)v_i$ for all fpqc $S''_\tau/S'_\tau$
and $c\in\Cent(G)(S''_\tau)$. Obviously $W$ and $W'$ are $G$-invariant (sheaves of) submodules of $V$. Over a splitting covering of $G$ it is easily
seen that $V=W\oplus W'$; therefore it is also true over the base $S$. By construction the representation $\rho'\colon G_{qs}\to W$ is center
preserving and $\lambda_{\rho'}=\lambda$.
\end{proof}\medskip

\begin{lem}\label{lem:weightrepr} Let $G_{qs}$ be a quasi-split group over $S$. Then any element of $\Cocent(G_{qs})(S)$
appears as $\lambda_\rho$ for some center preserving representation $\rho\colon G_{qs}\to\GL(V)$.
\end{lem}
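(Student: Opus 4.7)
The plan is to pass to a Galois cover splitting $G_{qs}$, construct a Weyl-module representation realising a dominant lift of $\lambda$ over that cover, and descend the direct sum over the $*$-orbit back to $S$.

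Let $T/S$ be a Galois cover splitting both $G_{qs}$ and $\Dyn(G_{qs})$, so $G_{qs}\times_ST\simeq G_0\times_ST$ for the split form $G_0$. Under the resulting identification $\Cocent(G_{qs})(T)=\La/\Lar$, the section $\lambda$ becomes a $*$-invariant class $[\mu]$. Fix a dominant lift $\mu\in\La$ and let $\mu=\mu_1,\dots,\mu_r$ be its orbit under the $*$-action. Because this action preserves both the chosen system of positive roots and the lattice $\Lar$, each $\mu_i$ is dominant and represents the class $\lambda$. Now form the representation $\rho_T\colon G_{0,T}\to\GL(V_T)$ on $V_T=\bigoplus_{i=1}^rV(\mu_i)$, where $V(\mu_i)$ is the Weyl module of highest weight $\mu_i$. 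Every weight of $V_T$ differs from $\mu$ by an element of $\Lar$, so Lemma~\ref{lem:cent}(\ref{item:cent}) shows that $\rho_T$ is center preserving with $\lambda_{\rho_T}=[\mu]$.

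The descent to $S$ proceeds as follows. The cocycle $\xi$ defining $G_{qs}$ acts on the root datum of $G_0$ via the $*$-action of $\Aut(T/S)$, which by construction permutes the orbit $\{\mu_i\}$ and hence the summands $V(\mu_i)$ of $V_T$. Choosing, for each $\sigma\in\Aut(T/S)$, intertwining isomorphisms of $G_0$-modules $V(\mu_i)\to V(\sigma\mu_i)$ yields a descent datum on $V_T$ compatible with the one defining $G_{qs}$, and thereby produces a representation $\rho\colon G_{qs}\to\GL(V)$ where $V$ is the descent of $V_T$. Both center-preservation and the identity $\lambda_\rho=\lambda$ descend fpqc-locally from $T$: for any central $c\in\Cent(G_{qs})$ the element $\rho(c)$ acts on each summand $V(\mu_i)$ by the scalar $\mu_i(c)=\mu(c)$ (since $\mu_i-\mu\in\Lar$ restricts trivially to the center), so its image in $\Gm$ is $\mu(c)$, yielding $\lambda_\rho=[\mu]=\lambda$.

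The main obstacle is making the descent step rigorous: one has to verify that the outer twist of $G_0$ by $\xi_\sigma$ genuinely carries $\rho_T$ to its $\sigma$-pullback — essentially the statement that $\sigma^*V(\mu_i)\simeq V(\sigma\mu_i)$ as $G_0$-modules — and then to check that the scalar ambiguity in the chosen intertwiners, a priori a $2$-cocycle with values in $\Gm$, does not obstruct the descent. In the quasi-split setting one expects this obstruction to vanish, so that the output is a genuine $\GL(V)$-representation rather than merely a $\GL_1(A)$-representation for a nontrivial Azumaya algebra $A$; this will be the delicate point to pin down.
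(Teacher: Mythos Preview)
Your overall strategy matches the paper's, and you have correctly located the crux: the $2$-cocycle obstruction in the descent datum is exactly what must be killed, and ``one expects this obstruction to vanish'' is not yet a proof.

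The paper closes this gap with two moves you did not make. First, rather than taking an arbitrary dominant lift $\mu$ and summing over its $*$-orbit, it observes (using \cite{Bu} and \cite{Tits71}) that every class in $\La/\Lar$ is represented by a minuscule weight and that $(\La/\Lar)^*=\La^*/\Lar^*$, so one may choose a \emph{single $*$-invariant} minuscule weight $\lambda$. This collapses your orbit to a singleton and eliminates the permutation of summands entirely. Second, working with the Weyl module $V(\lambda)=\UF^-_{\ZZ}v_+$ over $\ZZ$, for each $\gamma\in\Gamma\le\Aut(\RR,\Phi^+)$ fixing $\lambda$ the paper constructs the intertwiner $\varphi$ between $\rho$ and $\rho\circ\gamma$ over $\CC$ and normalizes it by $\varphi(v_+)=v_+$. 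This normalization pins $\varphi$ down uniquely (no scalar freedom remains), and the computation $\varphi(\UF^-_{\ZZ}v_+)\subseteq\gamma(\UF^-_{\ZZ})\varphi(v_+)=\UF^-_{\ZZ}v_+$ shows that $\varphi$ preserves the $\ZZ$-form. One thus obtains a genuine group homomorphism $\psi\colon\Gamma\to\GL(V(\lambda))$, and twisting by the cocycle $\xi\in\HZ^1(S,\Gamma)$ defining $G_{qs}$ produces an honest $\GL(V)$-representation rather than a $\GL_1(A)$-valued one.

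Your orbit-sum approach could in principle be salvaged by the same normalization trick---fix a highest weight vector in each $V(\mu_i)$ and require intertwiners to carry them to one another---but the paper's reduction to a single $*$-invariant weight is shorter and sidesteps the bookkeeping altogether.
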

\begin{proof}
Over a splitting covering of $G_{qs}$ choose a weight $\lambda\in\Lambda$ that represents a given element of $\Cocent(G_{qs})(S)$.
Obviously $\lambda+\Lar$ is $*$-invariant.
It is known (see \cite[Ch. VI, Exerc. 5 du \S 2]{Bu}) that any weight is equivalent modulo $\Lar$ to
a minuscule weight. On the other hand, by \cite[3.1]{Tits71} we have $(\La/\Lar)^*=\La^*/\Lar^*$.
So we may assume that $\lambda$ is a $*$-invariant minuscule weight.

Consider first the split group $G_0$ over $\ZZ$. Recall briefly the construction of a Weyl module
$V(\lambda)$ for $G_0$ (see \cite{Jan} for details). We start from a finite dimensional
irreducible $(G_0)_{\CC}$-module
with the highest weight $\lambda$; we fix a vector $v_+$ of the weight $\lambda$ (which is unique up to
a scalar). Denote by $\UF$ the universal enveloping algebra of the Lie algebra of $(G_0)_{\CC}$, by $\UF^+$ and
$\UF^-$ its subalgebras generated by the positive (respectively, negative) root subspaces, and
by $\UF_{\ZZ}$, $\UF^+_{\ZZ}$, $\UF^-_{\ZZ}$ their $\ZZ$-forms used in the Chevalley's construction of split reductive groups. Then
$V(\lambda)$ is defined as $\UF^-_{\ZZ} v_+$. Note that $V(\lambda)$ is center preserving by Lemma~\ref{lem:cent}, \eqref{item:cent}.

Let $\Gamma$ be a subgroup of $\Aut(\RR,\,\Phi^+)$ preserving
$\lambda$. Then any element $\gamma\in\Gamma$ induces an automorphism of
$\UF_{\ZZ}$ which preserves $\UF^+_{\ZZ}$ and $\UF^-_{\ZZ}$. Since $\gamma$ preserves $\lambda$, the representations
$\rho\colon(G_0)_{\CC}\to\GL(V(\lambda)_{\CC})$ and $\rho\circ\gamma\colon(G_0)_{\CC}\to\GL(V(\lambda)_{\CC})$ are equivalent, and
their differentials are equivalent as well. Therefore, there exists $\varphi\in\GL(V(\lambda)_{\CC})$ such that
$\gamma(g)\varphi(v)=\varphi(gv)$ for every $v\in V(\lambda)_{\CC}$ and $g\in\UF$; moreover, $\varphi$ is unique up to a scalar.
It is easy to see that $\varphi$ preserves the
line spanned by $v_+$, and we can normalize
$\varphi$ so that $\varphi(v_+)=v_+$.
Now,
$$
\varphi(\UF^-_{\ZZ}v_+)\le\gamma(\UF^-_{\ZZ})\varphi(v_+)=\UF^-_{\ZZ}v_+,
$$
so $\varphi$ induces an automorphism $\varphi_{\ZZ}$ of $V(\lambda)$ compatible with $\gamma$ and preserving $v_+$. Since $\ZZ[G_0]$
is a Hopf subalgebra of ${\mathbb Q}[G_0]$ and $V(\lambda)$ is a subcomodule of $V(\lambda)_{\mathbb Q}$, and $\CC/{\mathbb Q}$ is faithfully flat,
$\varphi_{\ZZ}$ is an equivalence of the representations
$\rho\colon G_0\to\GL(V(\lambda))$ and $\rho\circ\gamma\colon G_0\to\GL(V(\lambda))$. Moreover, since $\varphi_{\ZZ}$ is uniquely
determined by $\gamma$, we obtain a homomorphism $\psi\colon\Gamma\to\GL(V(\lambda))$.

Now let $\xi$ be a cocycle in $\HZ^1(S,\,\Gamma)$ producing $G_{qs}$.
The cocycle $\psi_*(\xi)$ then defines a projective module $V$ together with a representation $G_{qs}\to\GL(V)$ we need.
\end{proof}\medskip

\section{Tits algebras}

\begin{thm}\label{thm:algtits}
Let $(G,\,u)$ be an oriented semisimple group scheme of constant type over $S$ corresponding
to the class $[\xi]\in\HH^1(S,\,G^{ad}_{qs})$.
\begin{enumerate}
\item There exist two natural mutually quasi-inverse equivalences $F_u$, $F'_u$ between
the categories of group schemes over $S$ with $G_{qs}^{ad}$-action (by
group automorphisms) and group schemes over $S$ with $G^{ad}$-action.
In particular, each center preserving representation $\rho\colon G_{qs}\to\GL(V)$ gives rise to
a center preserving representation $F_u(\rho)\colon G\to\GL_1(A_{u,\,\rho})$ for some Azumaya algebra
$A_{u,\,\rho}$.

\item\label{item:brauer} The class $[A_{u,\,\rho}]$ in the Brauer group $\Br(S)$ depends only on $\lambda_{F_u(\rho)}$
and not on the particular choice of $u$ and $\rho$. Its image in $\HH^2(S,\,\Gm)$
coincides with $(\lambda_{\rho})_*\delta([\xi])$, where
$$
(\lambda_\rho)_*\colon\HH^2(S,\,\Cent(G_{qs}))\to\HH^2(S,\,\Gm),
$$
and $\delta$ is the connecting homomorphism in the long exact sequence arising from the sequence
$$
\xymatrix{
1\ar[r]&\Cent(G_{qs})\ar[r]&G_{qs}\ar[r]&G^{ad}_{qs}\ar[r]&1.
}
$$
\end{enumerate}
\end{thm}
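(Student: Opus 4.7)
The plan is to construct $F_u$ by the standard inner-twist construction. Fix a cocycle $\xi\in\HZ^1(S,\,G_{qs}^{ad})$ whose class corresponds to $(G,\,u)$. For any group scheme $X$ equipped with a $G_{qs}^{ad}$-action, let $F_u(X)$ be the $\xi$-twist of $X$, which automatically carries an action of $G^{ad}=F_u(G_{qs}^{ad})$; the quasi-inverse $F'_u$ is the twist by the opposite torsor. Different representatives of $[\xi]$ give canonically isomorphic outputs, and fpqc descent furnishes the required equivalence of categories. Now take a center preserving $\rho\colon G_{qs}\to\GL(V)$ and view $\GL(V)$ as a group scheme with the $G_{qs}^{ad}$-action by inner automorphisms through $\rho^{ad}$; the identity $\rho(ghg^{-1})=\rho(g)\rho(h)\rho(g)^{-1}$ makes $\rho$ equivariant, so applying $F_u$ produces $F_u(\rho)\colon G\to F_u(\GL(V))$. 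Extending $F_u$ in the obvious way to Azumaya algebras with inner $G_{qs}^{ad}$-action, the algebra $A_{u,\rho}:=F_u(\End(V))$ is Azumaya and $F_u(\GL(V))=\GL_1(A_{u,\rho})$. The central character is preserved by the twist, so $F_u(\rho)$ is again center preserving.

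For part (\ref{item:brauer}), the crucial observation is that $F_u$ commutes with tensor products and duals of representations. Given center preserving $\rho_1$ and $\rho_2$ with equal central character, Lemma~\ref{lem:cent}, parts (\ref{item:dual}) and (\ref{item:tensor}), shows that $\rho_1\otimes\rho_2^*$ is center preserving with trivial $\lambda$, hence factors through $G_{qs}^{ad}$. Thus the underlying module $V_1\otimes V_2^*$ itself carries a $G_{qs}^{ad}$-action, and its twist by $\xi$ descends to a locally free sheaf whose endomorphism algebra is $A_{u,\rho_1}\otimes A_{u,\rho_2}^{\mathrm{op}}$; in particular this product is Brauer trivial, so $[A_{u,\rho_1}]=[A_{u,\rho_2}]$. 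Independence from the orientation $u$ follows because two choices of $u$ produce cohomologous cocycles, hence isomorphic twisted algebras.

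To identify the class $[A_{u,\rho}]$ in $\HH^2(S,\,\Gm)$ with $(\lambda_\rho)_*\delta([\xi])$, consider the commutative diagram of central extensions
\[
\xymatrix{
1\ar[r]&\Cent(G_{qs})\ar[r]\ar[d]^{\lambda_\rho}&G_{qs}\ar[r]\ar[d]^\rho&G_{qs}^{ad}\ar[r]\ar[d]^{\rho^{ad}}&1\\
1\ar[r]&\Gm\ar[r]&\GL(V)\ar[r]&\PGL(V)\ar[r]&1
}
\]
and invoke the naturality of the connecting homomorphism in nonabelian cohomology. Writing $\delta'$ for the connecting map of the bottom row, naturality yields $(\lambda_\rho)_*\delta([\xi])=\delta'(\rho^{ad}_*[\xi])$; on the other hand, $\rho^{ad}_*[\xi]\in\HH^1(S,\,\PGL(V))$ is by construction the class that twists $\End(V)$ into $A_{u,\rho}$, and $\delta'$ sends this class to its Brauer image $[A_{u,\rho}]\in\HH^2(S,\,\Gm)$. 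The principal obstacle is the fpqc descent bookkeeping needed to guarantee that $F_u$ respects the tensor, dual, and unit-group structures simultaneously, and that naturality of the nonabelian $\delta$ holds in this generality; once these categorical compatibilities are in place, the theorem reduces to standard twisting calculus.
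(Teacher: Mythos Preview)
Your construction of $F_u$ by twisting and your derivation of the cohomological formula via the commutative ladder of central extensions match the paper's proof essentially verbatim; the paper phrases the twist as the contracted product $I\times^{G^{ad}_{qs}}(-)$ with the bitorsor $I=\Isomint_u(G_{qs},G)$, but this is the same operation. Your tensor-product argument for independence from $\rho$ (with $u$ fixed) is a valid alternative, though note that once you have the formula $[A_{u,\rho}]=(\lambda_\rho)_*\delta([\xi])$ it is immediate that the class depends only on $\lambda_\rho$, so the paper gets this for free.

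There is, however, a genuine gap in your treatment of orientation-independence. It is \emph{not} true that two orientations $u,v$ of the same $G$ yield cohomologous cocycles in $\HH^1(S,G^{ad}_{qs})$: the paper states explicitly that $\HH^1(S,G^{ad}_{qs})$ is in bijection with isomorphism classes of \emph{oriented} inner forms, so distinct orientations typically give distinct classes (e.g.\ for $G=\SL_1(A)$ the two orientations correspond to $[A]$ and $[A^{\mathrm{op}}]$). What is true is that the classes $[\xi_u]$ and $[\xi_v]$ differ by an outer automorphism of $G_{qs}$. The paper handles this by setting $\rho'=F'_v(F_u(\rho))$, which is $\rho$ precomposed with that outer automorphism; then $F_v(\rho')\simeq F_u(\rho)$ tautologically, and one checks $\lambda_{\rho'}=\lambda_{F_u(\rho)}\circ v=\lambda_{F_v(\sigma)}\circ v=\lambda_\sigma$, whence $[A_{v,\sigma}]=[A_{v,\rho'}]=[A_{u,\rho}]$ by the $\rho$-independence already established for the fixed orientation $v$. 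You need an argument of this shape; the assertion about cohomologous cocycles does not hold.
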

\begin{proof}
{\bf 1.} Consider the left $G^{ad}$- and right $G^{ad}_{qs}$-torsor $I=\Isomint_u(G_{qs},\,G)$
(see Exp.~XXIV Rem.~1.11). Let $H$ be a group scheme with a $G^{ad}_{qs}$-action.
Then $F_u(H)=I\times^{G^{ad}_{qs}}H$ is a group scheme over $I/G^{ad}_{qs}\simeq S$
with a left $G^{ad}$-action. Similarly, $F'_u$ is defined by
$F'_u(H')=I'\times^{G^{ad}}H'$, where $I'=\Isomint_{u^{-1}}(G,\,G_{qs})$. Further,
we have isomorphisms
$I'\times^{G^{ad}}I\simeq G^{ad}_{qs}$ and $I\times^{G^{ad}_{qs}}I'\simeq G^{ad}$, hence $F_u$ and $F'_u$ are mutually quasi-inverse.

{\bf 2.} The cohomological class in $\HH^1(S,\,\PGL(V))$ corresponding to $A_{u,\,\rho}$
is nothing but $\rho^{ad}_*([\xi])$, where $\rho^{ad}\colon G^{ad}_{qs}\to \PGL(V)$ is the representation
induced by $\rho$.
Now the last assertion of the Theorem follows from the commutativity of the diagram
$$
\xymatrix{
\HH^1(S,\,G^{ad}_{qs})\ar[d]_{\rho^{ad}_*}\ar[r]^-{\delta}&\HH^2(S,\,\Cent(G_{qs}))\ar[d]^{(\lambda_\rho)_*}\\
\HH^1(S,\,\PGL(V))\ar[r]&\HH^2(S,\,\Gm),
}
$$
which comes from the diagram
$$
\xymatrix{
1\ar[r]&\Cent(G_{qs})\ar[d]_{\lambda_\rho}\ar[r]&G_{qs}\ar[d]_\rho\ar[r]&G^{ad}_{qs}\ar[d]_{\rho^{ad}}\ar[r]&1\\
1\ar[r]&\Gm\ar[r]&\GL(V)\ar[r]&\PGL(V)\ar[r]&1.
}
$$
Thus, once $u$ is fixed, the class of $A_{u,\,\rho}$ depends only on $\lambda_\rho$. Let $v$ be another
orientation on $G$. Then $\rho'=F'_v(F_u(\rho))$ is $\rho$ composed with the corresponding outer automorphism
of $G_{qs}$; in particular, its target is still $\GL(V)$. Obviosly $F_{v}(\rho')\simeq F_u(\rho)$. Now, if
$\sigma$ is another representation of $G_{qs}$ with $\lambda_{F_v(\sigma)}=\lambda_{F_u(\rho)}$, then
$$
\lambda_\sigma=\lambda_{F_v(\sigma)}\circ v=\lambda_{F_u(\rho)}\circ v=\lambda_{F_v(\rho')}\circ v=\lambda_{\rho'},
$$
hence
$$
[A_{v,\,\sigma}]=[A_{v,\,\rho'}]=[A_{u,\,\rho}].
$$
\end{proof}\medskip

The Azumaya algebra $A_{u,\,\rho}$ will be called the \emph{Tits algebra} of $G$ corresponding to
a center preserving representation $\rho\colon G_{qs}\to\GL(V)$. We denote by $\beta_G$ the homomorphism
\begin{align*}
\beta_G\colon\Cocent(G)(S)&\to\Br(S)\\
\lambda&\mapsto[A_{u,\,\rho}]\text{ with }\lambda_{F_u(\rho)}=\lambda.
\end{align*}
It is well-defined in view of Lemma~\ref{lem:weightrepr} and Theorem~\ref{thm:algtits}. To see that $\beta_G$ is indeed a homomorphism one can use either the
tensor product of representations or the fact that $\Br(S)$ is a subgroup in $\HH^2(S,\,\Gm)$.

If the orientation $u$ is fixed, we will consider $\beta_G$ as a homomorphism from $\Cocent(G_{qs})$ to $\Br(S)$.
Further, for an element $\lambda$ of $\La^*$ we will write $\beta_G(\lambda)$ instead of $\beta_G(\lambda|_{\Cent(G_{qs})})$.

\medskip

The Dynkin scheme $\Dyn(G)$ is the disjoint union of its minimal clopen subschemes which will be called \emph{orbits} for brevity;
they indeed correspond to orbits of the $*$-action on a set of simple roots.

Assume that $G$ is simply connected. Let $T_{qs}$ be a fixed maximal torus of $G_{qs}$, $T_{qs}^{ad}$ be the respective torus in $G_{qs}^{ad}$.
Over a splitting covering we have two canonical homomorphisms
\begin{align*}
&\omega\colon\Dyn(G)\to\Hom(T_{qs},\Gm),\\
&\alpha\colon\Dyn(G)\to\Hom(T_{qs}^{ad},\Gm),
\end{align*}
that associate to each vertex $i$ of the Dynkin diagram the fundamental weight $\omega_i$ or, respectively,
the simple root $\alpha_i$. By faithfully flat descent these homomorphisms are defined over the base scheme $S$.

Let $O$ be an orbit in $\Dyn(G)$. Composing $\omega$ (resp., $\alpha$) with the inclusion $O\to\Dyn(G)$, we obtain
a weight $\omega_O\colon (T_{qs})_O\to\Gm$ (resp., a root $\alpha_O\colon (T_{qs}^{ad})_O\to\Gm$),
which will be called the \emph{canonical weight}
(resp., the {\it canonical root}) corresponding to $O$ (cf. Exp. XXIV 3.8). It is easy to see that $\alpha_O$ and $\omega_O$
are $*$-invariant weights of $G_O$.

Recall that the Weil restriction $R_{S'/S}$ ($\prod_{S'/S}$ in the notation of~\cite{SGA}) is the right adjoint to the base change
functor. So we have homomorphisms
\begin{align*}
&\bar\omega_O\colon T_{qs}\to R_{O/S}(\Gm),\\
&\bar\alpha_O\colon T_{qs}^{ad}\to R_{O/S}(\Gm).
\end{align*}

If $O$ splits over an extension $S'/S$ into a disjoint union $\coprod_i O_i$, then $(\bar\omega_O)_{S'}$ (resp. $(\bar\alpha_O)_{S'}$)
is equal to $\prod_i\omega_{O_i}$ (resp., $\prod_i\alpha_{O_i}$) composed with the natural isomorphism
$\prod_i R_{O_i/S'}(\Gm)\simeq R_{\coprod_i O_i/S'}(\Gm)$. In particular, over a splitting covering $\bar\omega_O$ (resp. $\bar\alpha_O$)
can be identified with an appropriate product of $\omega_i$ (resp., $\alpha_i$).

\begin{prop}\label{prop:canweight}
\begin{enumerate}
\item\label{item:qstor} In the above setting we have the isomorphism
$$
\prod_O\bar\omega_O\colon T_{qs}\simeq \prod_O R_{O/S}(\Gm)
$$
(cf. Exp.~XXIV Prop.~3.13).

\item\label{item:levicent} If $L'_{qs}$ is the standard Levi subgroup of a standard parabolic subgroup $P$ in $G_{qs}^{ad}$, then we have
the isomorphism
$$
\prod_{O\colon O\not\subset\tf(P)}\bar\alpha_O\colon\Cent(L'_{qs})\simeq\prod_{O\colon O\not\subset\tf(P)}R_{O/S}(\Gm).
$$

\item\label{item:qslevi} We have
$$
L'_{qs}=\Cent_{G_{qs}}(Q)=\Cent_{G_{qs}}(Q_{diag}),
$$
where $Q$ is the natural split subtorus
$\prod_{O\colon O\not\subset\tf(P)}\Gm$ of\, $\prod_{O\colon O\not\subset\tf(P)}R_{O/S}(\Gm)$, and $Q_{diag}$
is the split torus of rank $1$ embedded diagonally into $Q$.
\end{enumerate}
\end{prop}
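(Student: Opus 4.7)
For part~\eqref{item:qstor}, the plan is to descend from a Galois cover $T/S$ that splits $\Dyn(G)$. Over $T$ the group $(G_{qs})_T$ is split, and since $G$ is simply connected the fundamental weights $\{\omega_i : i \in D\}$ form a $\ZZ$-basis of $X^*((T_{qs})_T) = \La$; hence the product $\prod_i \omega_i$ gives an isomorphism $(T_{qs})_T \simeq \prod_{i \in D}\Gm$. The $*$-action of $\Aut(T/S)$ permutes the factors according to its action on $D$; grouping factors by $*$-orbits and applying faithfully flat descent, the product $\prod_{i\in O_T}\Gm$ descends to the Weil restriction $R_{O/S}(\Gm)$ for each orbit $O$, yielding the asserted isomorphism $\prod_O\bar\omega_O$.

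For part~\eqref{item:levicent}, the key point is that since $G$ is simply connected, $T_{qs}^{ad}$ has character lattice $\Lar$ with $\ZZ$-basis $\{\alpha_i : i \in D\}$. The standard Levi $L'_{qs}$ has $\{\alpha_i : i \in \tf(P)\}$ as its own simple roots, so its center is the identity component of $\bigcap_{i \in \tf(P)}\Ker\alpha_i$, and therefore has character lattice $\Lar/\langle\alpha_i : i \in \tf(P)\rangle$, which is free with basis given by the images of $\{\alpha_i : i \not\in \tf(P)\}$. Over the splitting cover this gives $\Cent(L'_{qs})_T\simeq\prod_{i \not\in \tf(P)}\Gm$; the subset $\tf(P)$ being $*$-invariant, the $*$-action permutes the remaining simple roots according to the orbits $O \not\subset \tf(P)$, and descending as in part~\eqref{item:qstor} converts each orbit block into a factor $R_{O/S}(\Gm)$.

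For part~\eqref{item:qslevi}, the plan is to prove the chain
$$
\Cent_{G_{qs}}(Q_{diag}) \subseteq L'_{qs} \subseteq \Cent_{G_{qs}}(Q),
$$
the right inclusion being immediate from $Q \subseteq \Cent(L'_{qs})$ via part~\eqref{item:levicent}. For the left inclusion, work over the splitting cover and use the standard description of the centralizer of a subtorus in a reductive group as the subgroup generated by the maximal torus together with those root subgroups $U_\alpha$ for which $\alpha$ is trivial on the subtorus. Chasing $Q_{diag} \hookrightarrow Q \hookrightarrow \prod_O R_{O/S}(\Gm) \simeq \Cent(L'_{qs})$ through the identifications of the previous parts, a root $\alpha = \sum_i n_i\alpha_i$ of $(G_{qs})_T$ restricts to $Q_{diag}$ as the character $t \mapsto t^{\sum_{i \not\in \tf(P)}n_i}$. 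Since all nonzero $n_i$ in any given root share a common sign, this exponent vanishes precisely when $n_i = 0$ for every $i \not\in \tf(P)$, i.e.\ when $\alpha$ is already a root of $L'_{qs}$; this forces $\Cent_{G_{qs}}(Q_{diag}) \subseteq L'_{qs}$.

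The main obstacle is the bookkeeping in part~\eqref{item:qslevi}: one has to chase $Q_{diag}$ carefully through the descent isomorphisms of parts~\eqref{item:qstor} and~\eqref{item:levicent} in order to read off how a root restricts to it, after which the argument collapses to the elementary sign-of-coefficients observation above. Parts~\eqref{item:qstor} and~\eqref{item:levicent} themselves are formal descent arguments once the correct $\ZZ$-basis of the relevant lattice has been identified.
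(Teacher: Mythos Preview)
Your proposal is correct and follows essentially the same route as the paper: reduce to a splitting cover, use that the $\omega_i$ (resp.\ the $\alpha_i$) form a $\ZZ$-basis of $\La$ (resp.\ $\Lar$) to get the split isomorphism, descend orbit-by-orbit to Weil restrictions, and for part~\eqref{item:qslevi} compare roots via the sign-of-coefficients observation. The paper organizes the inclusions as $L'_{qs}\le\Cent(Q)\le\Cent(Q_{diag})$ and cites SGA (Exp.~XXVI Prop.~6.1, Exp.~XXII 5.4.1) for the structure of the centralizer, but the content is identical; your explicit remark that all nonzero coefficients of a root share a sign is exactly what the paper uses implicitly when it asserts the two root sets coincide.
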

\begin{proof}
Let's prove \eqref{item:levicent}. Note that $\Cent(L'_{qs})$ is contained in $T_{qs}^{ad}$, so the map is well-defined.
Over each element $S_\tau$ of a splitting covering of $S$ the Dynkin scheme can be identified with
a set $D$
and $\tf(P)$ with a subset $D\setminus J$. The map $\prod_{O\colon O\not\subset\tf(P)}\bar\alpha_O$ becomes $\prod_{i\in J}\alpha_i$,
and $\Cent(L'_{qs})_{S_\tau}$ equals $\bigcap_{i\in D\setminus J}\Ker\alpha_i$. But
$$
\prod_{i\in D}\alpha_i\colon (T_{qs}^{ad})_{S_\tau}\to\prod_{i\in D}\Gm
$$
is an isomorphism, and \eqref{item:levicent} follows. Part \eqref{item:qstor} can be proved similarly and even easier.

We have obvious inclusions
$$
L'_{qs}\le\Cent_{G_{qs}}(Q)\le\Cent_{G_{qs}}(Q_{diag}),
$$
so to prove \eqref{item:qslevi} it suffices to show that $H=\Cent_{G_{qs}}(Q_{diag})$ is contained in $L'_{qs}$. We can pass to a splitting covering.
By Exp.~XXVI Prop.~6.1 $H_{S_\tau}$ is smooth with connected fibers; clearly it contains $(T_{qs}^{ad})_{S_\tau}$.
By Exp.~XXII 5.4.1 such subgroup is uniquely determined by the set of roots $\alpha$ such that the generator $e_\alpha$ of $\Lie((G_{qs})_{S_\tau})$
is contained in its Lie algebra. Note that the restriction of a simple root $\alpha_i$ to $Q_{diag}$ is identity when $i\in J$ and is trivial otherwise.
So $e_\alpha$ belongs to $\Lie(H_{S_\tau})$ if and only if the sum of its coefficients at $\alpha_i$ with $i\in J$ is zero. But $(L'_{qs})_{S_\tau}$
is also smooth with connected fibers and corresponds to the same set of roots, hence $L'_{qs}=H$.
\end{proof}\medskip

\begin{prop}\label{prop:strin}
In the setting of Theorem~{\rm\ref{thm:algtits}}, assume moreover that $G$ is simply connected
and $\Pic(\Dyn(G))=0$. Then $[\xi]$ comes from an element in $\HH^1(S,\,G_{qs})$ if and only if $\beta_{G_O}(\omega_O)=0$
for each orbit $O$.
\end{prop}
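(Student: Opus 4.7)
The plan is to combine the long exact sequence of $1\to\Cent(G_{qs})\to G_{qs}\to G_{qs}^{ad}\to 1$ with Theorem~\ref{thm:algtits} to reduce the assertion to the injectivity of a single cohomology map. By the non-abelian long exact sequence, $[\xi]$ comes from $\HH^1(S,G_{qs})$ if and only if $\delta([\xi])=0$ in $\HH^2(S,\Cent(G_{qs}))$. By Theorem~\ref{thm:algtits}(\ref{item:brauer}) applied over each orbit $O$ together with the functoriality of $\delta$, the class $\beta_{G_O}(\omega_O)\in\Br(O)$ is the image of $\delta([\xi])|_O$ under the pushforward $(\omega_O|_{\Cent(G_{qs})_O})_*\colon\HH^2(O,\Cent(G_{qs})_O)\to\HH^2(O,\Gm)$, so the ``only if'' direction is immediate.

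For the converse I would assemble these pushforwards into a single map out of $\HH^2(S,\Cent(G_{qs}))$. By the adjunction between base change to $O$ and the Weil restriction $R_{O/S}$, each character $\omega_O|_{\Cent(G_{qs})_O}$ corresponds to a homomorphism $\bar\omega_O|_{\Cent(G_{qs})}\colon\Cent(G_{qs})\to R_{O/S}(\Gm)$, and Shapiro's lemma identifies $\HH^i(S,R_{O/S}(\Gm))$ with $\HH^i(O,\Gm)$. The resulting map
$$
\Theta\colon\HH^2(S,\Cent(G_{qs}))\to\prod_O\HH^2(O,\Gm)
$$
sends $\delta([\xi])$ to $(\beta_{G_O}(\omega_O))_O$, so the task reduces to showing that $\Theta$ is injective. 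By Proposition~\ref{prop:canweight}(\ref{item:qstor}), the product $\prod_O\bar\omega_O$ is an isomorphism $T_{qs}\simeq\prod_O R_{O/S}(\Gm)$, hence $\Theta$ factors as $\HH^2(S,\Cent(G_{qs}))\to\HH^2(S,T_{qs})\simeq\prod_O\HH^2(O,\Gm)$. The kernel of the first arrow, by the long exact sequence of $1\to\Cent(G_{qs})\to T_{qs}\to T_{qs}^{ad}\to 1$, is the image of $\HH^1(S,T_{qs}^{ad})$.

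The main step, and the only place where the hypothesis $\Pic(\Dyn(G))=0$ enters, will be the vanishing of $\HH^1(S,T_{qs}^{ad})$. Here I would invoke the analogue of Proposition~\ref{prop:canweight}(\ref{item:qstor}) for the adjoint torus: the simple roots $\alpha_i$ form a basis of $\Lar$, and their grouping by orbits yields an isomorphism $T_{qs}^{ad}\simeq\prod_O R_{O/S}(\Gm)$ via $\prod_O\bar\alpha_O$. Combined with Shapiro's lemma, this gives $\HH^1(S,T_{qs}^{ad})\simeq\prod_O\Pic(O)=\Pic(\Dyn(G))=0$, which establishes the injectivity of $\Theta$. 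The rest of the argument is essentially a diagram chase together with the functorial description of Tits algebras recorded in Theorem~\ref{thm:algtits}.
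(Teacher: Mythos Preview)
Your proof is correct and follows essentially the same route as the paper's: both reduce the lifting question to the vanishing of $\delta([\xi])$, push this class into $\HH^2(S,T_{qs})\simeq\prod_O\HH^2(O,\Gm)$ via Shapiro, and control the kernel by $\HH^1(S,T_{qs}^{ad})\simeq\Pic(\Dyn(G))=0$ using the exact sequence of $1\to\Cent(G_{qs})\to T_{qs}\to T_{qs}^{ad}\to 1$. The only cosmetic difference is that the paper obtains the adjoint-torus isomorphism by specializing Proposition~\ref{prop:canweight}(\ref{item:levicent}) to the Borel subgroup rather than rederiving it, and does not name the assembled map $\Theta$ explicitly.
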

\begin{proof}
If $[\xi]$ belongs to the image of $\HH^1(S,\,G_{qs})\to\HH^1(S,\,G^{ad}_{qs})$ then $\delta([\xi]_O)=0$ and
therefore $\beta_{G_O}=0$ for each $O$. Conversely, assume that $\beta_{G_O}(\omega_O)=0$ for each $O$.
Proposition~\ref{prop:canweight} applied to the Borel subgroup implies that $T_{qs}\simeq\prod_O R_{O/S}(\Gm)$
and $T_{qs}^{ad}\simeq\prod_O R_{O/S}(\Gm)$. Now the Shapiro lemma (cf. Exp. XXIV Prop. 8.2) implies that the image of $\delta([\xi])$ in
$\HH^2(S,\,T_{qs})$ is trivial, while $\HH^1(S,\,T^{ad}_{qs})=\Pic(\Dyn(G))=0$. Now
the claim follows from the exact sequence
$$
\xymatrix{
\HH^1(S,\,T^{ad}_{qs})\ar[r]&\HH^2(S,\,\Cent(G_{qs}))\ar[r]&\HH^2(S,\,T_{qs}),
}
$$
which comes from the sequence
$$
\xymatrix{
1\ar[r]&\Cent(G_{qs})\ar[r]&T_{qs}\ar[r]&T^{ad}_{qs}\ar[r]&1.
}
$$
\end{proof}\medskip

\begin{thm}\label{thm:levi}
\begin{enumerate}
\item\label{item:titsrestr} Let $(G,\,u)$ be an oriented semisimple group scheme of constant type over $S$,
$P$ be its parabolic subgroup admitting a Levi subgroup $L$, $H$ be the derived subgroup of $L$ with the induced orientation.
Denote by $\La$ the lattice of weights of $G_{qs}$. For every $\lambda\in\La^*$ denote by $\lambda'$ the restriction of $\lambda$ to
$\Cent(H_{qs})$. Then $\beta_G(\lambda)=\beta_H(\lambda')$. In particular, for any $\alpha\in\Lar^*$ one has
$\beta_H(\alpha')=0$.

\item\label{item:leviembed} Let $G_{qs}$ be a quasi-split simply connected group, $P_{qs}$ be a standard parabolic subgroup
of $G_{qs}$, $L_{qs}$ be its standard Levi subgroup, $H_{qs}$ be the derived subgroup of $L_{qs}$. Assume that $(H,\,v)$ is an
oriented inner form of $H_{qs}$, satisfying $\beta_{H_O}(\alpha'_O)=0$ for all $O\not\subset\tf(P_{qs})$. Then there exist an
oriented inner form $(G,\,v)$ of $G_{qs}$ and its parabolic subgroup $P$ admitting a Levi subgroup $L$ such that
the derived subgroup of $L$ with the induced orientation is isomorphic to $H$.

\item\label{item:unique} In the setting of {\rm\eqref{item:leviembed}}, assume that $\Pic(\Dyn(S))=0$. Then $(G,\,u)$ is unique up to an isomorphism.

\item\label{item:bij} In the setting of {\rm\eqref{item:leviembed}}, assume that $S$ is semilocal. Then $(G,\,u)$ determines $(H,\,v)$ up to an isomorphism.
\end{enumerate}
\end{thm}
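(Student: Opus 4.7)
\emph{Proof plan.} I would tackle the four parts in order, with \eqref{item:titsrestr} providing the functoriality used throughout \eqref{item:leviembed}--\eqref{item:bij}. For \eqref{item:titsrestr}, start with a center-preserving representation $\rho\colon G_{qs}\to\GL(V)$ satisfying $\lambda_{F_u(\rho)}=\lambda|_{\Cent(G_{qs})}$, restrict it to $H_{qs}$ (using $\Cent(G_{qs})\subseteq\Cent(H_{qs})$), and apply Lemma~\ref{lem:cent}\eqref{item:summand} to split off the $\lambda'$-isotypic summand $W\subseteq V$, producing a center-preserving $\rho'\colon H_{qs}\to\GL(W)$ with $\lambda_{\rho'}=\lambda'$. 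Since $P$ admits a Levi $L$, the cocycle $[\xi]$ lifts to some $[\xi_{L'}]\in\HH^1(S,L'_{qs})$ with $L'_{qs}:=L_{qs}/\Cent(G_{qs})$, and its image under $L'_{qs}\twoheadrightarrow L^{ad}_{qs}=H^{ad}_{qs}$ is $[\xi_H]$. The equality $[A_{u,\rho}]=[A_{v,\rho'}]$ then reduces, via the commutative diagram of Theorem~\ref{thm:algtits}\eqref{item:brauer}, to the naturality of the connecting map $\delta$ with respect to the induced map between the center-adjoint sequences of $G_{qs}$ and $H_{qs}$. The final assertion is immediate: any $\alpha\in\Lar^*$ vanishes on $\Cent(G_{qs})$, so $\beta_G(\alpha)=0$.

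For \eqref{item:leviembed}, the task is to lift $[\xi_H]\in\HH^1(S,H_{qs}^{ad})$ along the short exact sequence
\[
1\to\Cent(L'_{qs})\to L'_{qs}\to L_{qs}^{ad}=H_{qs}^{ad}\to 1.
\]
The obstruction $\delta([\xi_H])\in\HH^2(S,\Cent(L'_{qs}))$ decomposes, by Proposition~\ref{prop:canweight}\eqref{item:levicent} combined with Shapiro's lemma, as an element of $\prod_{O\not\subset\tf(P_{qs})}\HH^2(O,\Gm)$; the naturality argument used in \eqref{item:titsrestr} identifies its $O$-component with $\beta_{H_O}(\alpha'_O)\in\Br(O)\subseteq\HH^2(O,\Gm)$. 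The vanishing hypotheses then yield a lift $[\xi_{L'}]\in\HH^1(S,L'_{qs})$. Pushing $[\xi_{L'}]$ forward along $L'_{qs}\hookrightarrow G_{qs}^{ad}$ gives a class $[\xi]$, and hence a twisted group $(G,u)$; since the twisting cocycle is supported in the Levi of $P_{qs}^{ad}$, the group $G$ automatically contains a parabolic $P$ of type $\tf(P_{qs})$ whose Levi $L$ has derived subgroup (with the induced orientation) isomorphic to $(H,v)$.

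Parts \eqref{item:unique} and \eqref{item:bij} are comparatively formal. For \eqref{item:unique}, two lifts in $\HH^1(S,L'_{qs})$ of $[\xi_H]$ differ by an element in the image of $\HH^1(S,\Cent(L'_{qs}))=\prod_O\Pic(O)=\Pic(\Dyn(G))$, which vanishes by hypothesis; hence the lift, and therefore the class $[\xi]\in\HH^1(S,G_{qs}^{ad})$, is unique, so $(G,u)$ is uniquely determined. For \eqref{item:bij}, over a semilocal base any two parabolic subgroups of $G$ of type $\tf(P_{qs})$ are $G(S)$-conjugate (a standard consequence of the results of Exp.~XXVI applied over a semilocal base), and any two Levi subgroups of a fixed parabolic are conjugate under its unipotent radical; hence the derived subgroup of the Levi, with the induced orientation, is determined by $(G,u)$ up to isomorphism.

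The most delicate step is the identification, used in \eqref{item:titsrestr} and \eqref{item:leviembed}, of the $O$-component of $\delta([\xi_H])\in\HH^2(S,\Cent(L'_{qs}))$ with $\beta_{H_O}(\alpha'_O)$. This demands careful tracking of the centers through the chain $\Cent(G_{qs})\subseteq\Cent(H_{qs})\hookrightarrow\Cent(L'_{qs})$, of the canonical root $\alpha_O$ under the Weil-restriction description of $\Cent(L'_{qs})$ furnished by Proposition~\ref{prop:canweight}\eqref{item:levicent}, and of the naturality of $\delta$ across the two short exact sequences relating $H_{qs}^{ad}$ respectively to $H_{qs}$ and to $L'_{qs}$.
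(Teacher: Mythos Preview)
Your plan for parts \eqref{item:leviembed}--\eqref{item:bij} matches the paper's, and your geometric argument for \eqref{item:bij} (conjugacy of parabolics of a given type and of their Levi subgroups over a semilocal base) is a valid alternative to the paper's appeal to the injectivity of $\HH^1(S,L'_{qs})\to\HH^1(S,G_{qs}^{ad})$ from Exp.~XXVI Cor.~5.10. The gap is in part \eqref{item:titsrestr}.

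Your assertion ``$\Cent(G_{qs})\subseteq\Cent(H_{qs})$'' is false in general: the center of $G_{qs}$ sits in $\Cent(L_{qs})$ but need not lie in $H_{qs}=[L_{qs},L_{qs}]$ at all (take $G_{qs}=\SL_n$ with a maximal parabolic; the scalars $\mu_n$ are typically not contained in $\SL_k\times\SL_{n-k}$). Consequently there is no map between the center--adjoint sequences of $G_{qs}$ and $H_{qs}$ to which ``naturality of $\delta$'' could directly apply, and the chain $\Cent(G_{qs})\subseteq\Cent(H_{qs})\hookrightarrow\Cent(L'_{qs})$ in your final paragraph is likewise wrong. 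A cohomological argument along your lines can be made to work, but the comparison must be routed through $\Cent(L_{qs})$, using the two inclusions $\Cent(G_{qs})\hookrightarrow\Cent(L_{qs})\hookleftarrow\Cent(H_{qs})$ together with the factorization $L_{qs}\twoheadrightarrow L'_{qs}\twoheadrightarrow L_{qs}^{ad}=H_{qs}^{ad}$; this is the bookkeeping your sketch does not supply.

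The paper sidesteps this entirely by a different device. After splitting $V|_{H_{qs}}=U\oplus U'$ into the $\lambda'$-isotypic piece and its complement (your $W$), it observes that $U$ and $U'$, being sums of weight spaces for $T_{qs}\cap H_{qs}$, are automatically stable under the whole Levi $L_{qs}$, not merely under $H_{qs}$. Hence the composite $L'_{qs}\hookrightarrow G_{qs}^{ad}\xrightarrow{\rho^{ad}}\PGL(V)$ factors through $(\GL(U)\times\GL(U'))/\Gm$, and both Brauer classes $[A_{u,\rho}]$ and $[A_{v,\rho'}]$ are read off from two commuting squares sharing this intermediate group. This $L_{qs}$-stability is exactly what ties the $G$-side and the $H$-side together via the common class $[\xi_{L'}]\in\HH^1(S,L'_{qs})$, and is the substantive point missing from your sketch.
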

\begin{proof}

{\bf 1.} Let $\xi$ be a cocycle in $\HZ^1(S,\,G^{ad}_{qs})$ corresponding to $G$, given by elements
$g_{\sigma\tau}\in G^{ad}_{qs}(S_\sigma\times_SS_\tau)$ for some covering $\coprod S_\tau\to S$ that quasi-splits $G$.
Over each $S_\tau$ one can (possibly, passing to a finer covering) conjugate $P_{S_\tau}$ and $L_{S_\tau}$ by some
element of $G^{ad}_{qs}$ to $P_{qs}$ and $L_{qs}$, where $P_{qs}$ is a standard parabolic subgroup of $G_{qs}$ and
$L_{qs}$ is its standard Levi subgroup. Adjusting $\xi$ by the coboundary given by these elements,
we can assume that all $g_{\sigma\tau}$'s belong to $L'_{qs}$, where $L'_{qs}$ is the image of $L_{qs}$ in $G_{qs}^{ad}$,
by Exp.~XXVI Prop.~1.15 and Cor.~1.8 (cf. Exp.~XXVI 3.21)

Let $\rho\colon G_{qs}\to\GL(V)$ be a center preserving representation with a weight $\lambda$. Consider its
restriction to $H_{qs}$ and denote by $U$ the center preserving direct summand corresponding to $\lambda'$
and by $U'$ its complement invariant under $H_{qs}$ (see Lemma~\ref{lem:cent}, \eqref{item:summand}). Denote by $T_{qs}$ the standard
maximal torus of $L_{qs}$ and by $T'_{qs}$ its intersection with $H_{qs}$. Note that $U$ and $U'$, being sums of weight
subspaces of $T'_{qs}$, are stable under $T_{qs}$ and, therefore, are invariant under the action of $L_{qs}$.
Hence the map $\HH^1(S,\,L'_{qs})\to\HH^1(S,\,\PGL(V))$ factors through $\HH^1(S,\,(\GL(U)\times\GL(U'))/\Gm)$,
where $\Gm$ is embedded into $\GL(U)\times\GL(U')$ diagonally.

Now the claim is obtained by comparing the diagrams
$$
\xymatrix@C=30pt{
\HH^1(S,\,(\GL(U)\times\GL(U'))/\Gm)\ar[r]\ar[d]&\HH^2(S,\,\Gm)\ar@{=}[d]\\
\HH^1(S,\,\PGL(V))\ar[r]&\HH^2(S,\,\Gm)
}
$$
and
$$
\xymatrix@C=30pt{
\HH^1(S,\,(\GL(U)\times\GL(U'))/\Gm)\ar[r]\ar[d]&\HH^2(S,\,\Gm)\ar@{=}[d]\\
\HH^1(S,\,\PGL(U))\ar[r]&\HH^2(S,\,\Gm),
}
$$
which come from the sequences
$$
\xymatrix@C=30pt{
1\ar[r]&\Gm\ar[r]\ar@{=}[d]&\GL(U)\times\GL(U')\ar[r]\ar[d]&(\GL(U)\times\GL(U'))/\Gm\ar[d]\ar[r]&1\\
1\ar[r]&\Gm\ar[r]&\GL(V)\ar[r]&\PGL(V)\ar[r]&1.
}
$$
and
$$
\xymatrix@C=30pt{
1\ar[r]&\Gm\ar[r]\ar@{=}[d]&\GL(U)\times\GL(U')\ar[r]\ar[d]&(\GL(U)\times\GL(U'))/\Gm\ar[d]\ar[r]&1\\
1\ar[r]&\Gm\ar[r]&\GL(U)\ar[r]&\PGL(U)\ar[r]&1.
}
$$

{\bf 2.} Let $[\zeta]$ be the class in $\HH^1(S,\,H^{ad}_{qs})=\HH^1(S,\,L^{ad}_{qs})$ corresponding to $H$. Denote by $L'_{qs}$
and $H'_{qs}$ the images of $L_{qs}$ and $H_{qs}$ in $G^{ad}_{qs}$. Let us compute the image $\delta([\zeta])\in\HH^2(S,\,\Cent(L'_{qs}))$.
Using the assumption, Theorem~\ref{thm:algtits} \eqref{item:brauer}, and the commutative diagram
$$
\xymatrix{
\HH^1(S,\,H^{ad}_{qs})\ar@{=}[d]\ar[r]^-\delta&\HH^2(S,\,\Cent(H'_{qs}))\ar[d]\\
\HH^1(S,\,L^{ad}_{qs})\ar[r]^-\delta&\HH^2(S,\,\Cent(L'_{qs})),
}
$$
we see that $(\alpha_O)_*\delta([\zeta_O]))=0$ for any $O\not\subset\tf(P_{qs})$. Now Proposition~\ref{prop:canweight} \eqref{item:levicent}
and the Shapiro lemma show that $\delta([\zeta])=0$. It means that $[\zeta]$ comes from some $[\xi]\in\HH^1(S,\,L'_{qs})$,
and the image of $[\xi]$ in $\HH^1(S,\,G^{ad}_{qs})$ defines the desired $G$.

{\bf 3.} Let $(G,\,u)$ be such a group; denote by $[\xi]$ the corresponding class in $\HH^1(S,\,G^{ad}_{qs})$.
As we have seen in the proof of \eqref{item:titsrestr}, $[\xi]$ comes from an element of $\HH^1(S,\,L'_{qs})$, say $[\zeta]$. We have to show
that $[\zeta]$ (and a fortioti $[\xi]$) is completely determined by its image in $\HH^1(S,\,L^{ad}_{qs})$, or,
in other words, that the canonical map $\pi_*\colon\HH^1(S,\,L'_{qs})\to\HH^1(S,\,L^{ad}_{qs})$ is injective. Since $\Cent(L'_{qs})$
is central in $L'_{qs}$, $\HH^1(S,\,\Cent(L'_{qs}))$ acts on $\HH^1(S,\,L'_{qs})$, and the orbits of the action coincide with
the fibers of $\pi_*$. But $\HH^1(S,\,\Cent(L'_{qs}))$ by Proposition~\ref{prop:canweight} \eqref{item:levicent} and the Shapiro lemma
injects into $\Pic(\Dyn(G))$, which is trivial by the assumption.

{\bf 4.} Follows from the proof of \eqref{item:unique} and the fact that the map $\HH^1(S,\,L'_{qs})\to\HH^1(S,\,G^{ad}_{qs})$ is
injective (Exp.~XXVI Cor.~5.10).
\end{proof}\medskip

\section{Combinatorial restrictions}\label{sec:comb}

From now on we assume that $S=\Spec R$, where $R$ is a connected semilocal ring.
Recall that in this case
all minimal parabolic subgroups $P_{\min}$ of $G$ are conjugate under $G(S)$ and hence have the same type
$\tf_{\min}=\tf(P_{\min})$, which is a clopen subscheme of $\Dyn(G)$ (Exp.~XXVI Cor.~5.7).
 By Exp. XXVI Lemme 3.8 $P\mapsto \tf(P)$ is a bijection between
parabolic subgroups $P$ of $G$ containing $P_{\min}$ and clopen subschemes
$\tf$ of $\Dyn(G)$ containing $\tf_{\min}$.

Since $S$ is affine, for any parabolic subgroup $P$ of $G$ there exists
a Levi subgroup $L$ (Exp.~XXVI Cor.~2.3) of $P$, and a
unique parabolic subgroup $P^-$ which is opposite to $P$ with respect to $L$, i.e.
satisfies $P^-\cap P=L$ (Exp.~XXVI Th.~4.3.2). The type $\tf(P^-)$ is the image $s_G(\tf(P))$
of $\tf(P)$ under
an automorphism $s_G$ of $\Dyn(G)$ called the {\it opposition involution} (Exp.~XXIV
Prop.~3.16.6 and Exp.~XXVI 4.3.1; cf.~\cite{Tits66} 1.5.1). The corresponding automorphism
$s_G\in\Aut(D)$ is induced by the automorphism $\alpha\mapsto -w_0(\alpha)$ of the root system $\Phi$
of $G_0$, where $w_0$ is the unique element of maximal length in the Weyl group of $\Phi$.
In fact $s_G$ acts nontrivially only on irreducible components of $\Phi$ of type $A_n$, $n\ge 2$, $D_{2n+1}$,
$n\ge 1$, or $E_6$, where it coincides with the unique nontrivial automorphism of the component.

By the \emph{Tits index} of $G$ we mean the pair $(\Dyn(G),\,\tf_{\min})$.
Clearly, we have $\tf_{\min}=s_G(\tf_{\min})$, since if $P=P_{\min}$ is a minimal parabolic subgroup,
then $P^-$ is also minimal.

The group $G$ is quasi-split if $\tf_{\min}$ is empty. In the opposite case
when $\tf_{\min}=\Dyn(G)$ we say that $G$ is \emph{anisotropic}. The
\emph{anisotropic kernel} $G_{an}$ of $G$ is defined as the derived subgroup
of a Levi part of $P_{\min}$, which is indeed anisotropic by Exp.~XXVI Prop.~1.20.

Tits indices can be described in the set-theoretic style as follows. The assumption that $S=\Spec R$
is connected allows us to identify $D_T$ with $D$, and a clopen $*$-invariant subscheme of $D_T$ with a $*$-invariant
subset of $D$. Let $J\subseteq D$ be the complement of the subset corresponding to $(\tf_{\min})_T$.
Then the Tits index of $G$ is determined by the pair $(D,\,J)$ together with a $*$-action on $D$, represented
by a subgroup $\Gamma$ of $\Aut(D)$. Usually we indicate $\Gamma$ by writing its order as the upper left index attached to $D$
(for example, ${}^2E_6$, ${}^6D_4$ and so on). The group $G$ is \emph{of inner type} if $\Dyn(G)\simeq\Dyn(G_0)$,
or, in other words, $\Gamma=\{1\}$.

From now on, we fix a minimal parabolic subgroup $P=P_{\min}$ of $G$, a Levi subgroup
$L$ of $P$, and a maximal split subtorus $Q$ of $G$ such that $L=\Cent_G(Q)$, which exists by
Exp. XXVI Cor.~6.11 (or by Proposition~\ref{prop:canweight}~\eqref{item:qslevi} and descent).
Let $M$ be the lattice of characters of $Q$.
The Lie algebra $\Lie(G)$ of $G$ decomposes under the action of $Q$
into a direct sum of weight subspaces:
$$
\Lie(G)=\Lie(L)\oplus\bigoplus_{\alpha\in M\setminus\{0\}}\Lie(G)^{\alpha}.
$$
We denote by $\Psi$ the set of elements $\alpha\in M\setminus\{0\}$ such that $\Lie(G)^{\alpha}\neq 0$.
By Exp. XXVI Th. 7.4 $\Psi$ is a root system, which is called the \emph{relative root system} of $G$ with respect to $Q$.
One readily sees that the simple roots of $\Psi$ correspond bijectively to the $*$-orbits contained in $J$.

Denote by $\hat D$ the extended Dynkin diagram (one adds a vertex corresponding to minus
the maximal root to each irreducible component of $D$), and by $\hat J$ the union $J\cup(\hat D\setminus D)$.

\begin{lem}\label{lem:opinv}
Let $G$ be a semisimple algebraic group over $S$, and let $(D,\,J)$ be the Tits index of $G$.
Then any $*$-orbit $O\subseteq\hat J$ is invariant
under the opposition involution of the Dynkin diagram $(\hat D\setminus\hat J)\cup O$.
\end{lem}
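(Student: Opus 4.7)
The plan is to reduce the statement to the already-established fact that the type $\tf_{\min}^H$ of a minimal parabolic of a semisimple group scheme $H$ over the semilocal base $S=\Spec R$ is invariant under its opposition involution $s_H$. For this I will construct, for each $*$-orbit $O\subseteq\hat J$, a semisimple subgroup $H\subseteq G$ whose Dynkin diagram is $(\hat D\setminus\hat J)\cup O$ and whose set of ``circled'' vertices coincides with $O$; then $s_H(O)=O$ is immediate. Note that since the $*$-action preserves the set of irreducible components and sends highest roots to highest roots, any $*$-orbit in $\hat D$ is either contained in $D$ or contained in $\hat D\setminus D$, which gives a clean case split.

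When $O\subseteq J$, the subgroup $H$ is an ordinary derived Levi. Indeed, $\tf:=(D\setminus J)\cup O$ is a $*$-invariant clopen subset of $D$ containing $\tf_{\min}$ (differing from $\tf_{\min}$ by one $*$-orbit), so Exp.~XXVI Lemme~3.8 produces a parabolic $P\supseteq P_{\min}$ of $G$ with $\tf(P)=\tf$. Since $S$ is affine, $P$ admits a Levi subgroup $L$ (Exp.~XXVI Cor.~2.3); setting $H=L^{\mathrm{der}}$, the Dynkin diagram of $H$ is $(D\setminus J)\cup O$, and $P_{\min}\cap L$ is a minimal parabolic of $L$ of type $\tf_{\min}\cap\tf=D\setminus J$ (its derived Levi equals $G_{an}$, hence is anisotropic). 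Thus the circled vertices of $H$ are exactly $O$, and opposition-invariance of $\tf_{\min}^H$ gives $s_H(O)=O$.

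When $O\subseteq\hat D\setminus D$, the same strategy applies, but $H$ must now be a pseudo-Levi of maximal rank with Dynkin diagram $(D\setminus J)\cup O$ (a genuinely new closed sub-root-datum, not coming from a parabolic of $G$). I would build it by passing to a Galois covering $T/S$ that splits the $*$-action so that $O_T$ becomes a disjoint collection of single extended vertices $\tilde\alpha_c$, constructing the required Borel--de~Siebenthal subgroup component-by-component inside $G_T$ (each piece being a classical semisimple subgroup of maximal rank of an irreducible factor of $G_T$), and then descending back to $S$ using the $*$-invariance of $O$ and of $D\setminus J$. Once $H$ exists over $S$, its minimal parabolic is again realized inside $P_{\min}$ with type $D\setminus J$, its circled vertices are $O$, and the opposition argument concludes exactly as in the first case.

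The main obstacle is the scheme-theoretic pseudo-Levi construction in the second case: SGA~3 directly supplies only Levis, so the BdS-type subgroup scheme has to be produced by hand, e.g.\ as the identity component of the centralizer of a suitable finite-order element or finite diagonalisable subgroup of the maximal torus, with Galois-equivariance under the cover $T/S$ checked explicitly. Modulo this construction, everything reduces uniformly to the opposition-stability of minimal parabolics, which is itself a formal consequence of the uniqueness of opposite parabolics over the affine base $S$.
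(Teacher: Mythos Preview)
Your strategy is sound and your Case~1 ($O\subseteq J$) is essentially the paper's argument, just phrased as a Levi construction rather than as a centralizer. The paper, however, avoids the case split and the acknowledged gap in your Case~2 entirely by working with the \emph{relative} root system $\Psi$ of $G$ with respect to the maximal split torus $Q$. To each orbit $O\subseteq\hat J$ corresponds a single relative root $A\in\Psi$ (a simple one if $O\subseteq J$, the negative of the highest root if $O\subseteq\hat D\setminus D$), and the closed subset $\ZZ A\cap\Psi$ determines, via Exp.~XXVI Prop.~6.1, a reductive subgroup $G'\le G$---namely the centralizer of the codimension-one split subtorus $(\Ker A)^\circ\subseteq Q$---together with a parabolic $P'$ of $G'$ whose Levi is $L=\Cent_G(Q)$. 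Since $L$ has anisotropic derived subgroup, $P'$ is minimal in $G'$; over a splitting covering one reads off that $\Dyn(G')=(\hat D\setminus\hat J)\cup O$ with circled set $O$, and the opposition argument finishes exactly as you say.

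The key difference is that SGA3 already supplies reductive centralizers of \emph{split subtori} uniformly over the semilocal base, so no ad~hoc Borel--de~Siebenthal construction or Galois descent is needed. Your proposed realisation of the pseudo-Levi as the centralizer of a finite-order element or finite diagonalisable subgroup would require genuine extra work over a non-field base (smoothness, fibrewise connectedness, and compatibility of the construction across components of $D$ under the $*$-action are not automatic), whereas the centralizer of a split subtorus is handled directly by Exp.~XXVI Prop.~6.1. In short, the obstacle you flag dissolves once one recognises that the desired subgroup in both cases is the rank-one relative-root subgroup attached to $A$, which SGA3 provides for free.
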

\begin{proof}
Let $A\in\Psi$ be the relative root corresponding to $O$ (it is simple if $O\subseteq J$ and the opposite to
the maximal otherwise). By Exp.~XXVI Prop.~6.1 the subsets $\ZZ A\cap\Psi$ and $\ZZ A\cap\Psi^+$ correspond
to certain subgroups $G'$ and $P'$ of $G$; moreover, $G'$ is reductive and $P'$ is a parabolic subgroup of $G'$
having $L$ as a Levi subgroup. Since $L$ is anisotropic, $P'$ is a minimal parabolic subgroup of $G'$.
Passing to a splitting covering one sees that the Dynkin diagram of $G'$ is $(\hat D\setminus\hat J)\cup O$,
and the type of $P'$ is given by $O$. The Lemma follows.
\end{proof}\medskip

In the next Proposition we list all possible cases when the conclusion of Lemma~\ref{lem:opinv} holds
for an irreducible root system $\Phi$. Our numbering of the vertices of Dynkin diagrams follows~\cite{Bu}.

\begin{prop}\label{prop:listcomb}
Let $\Phi$ be a reduced irreducible root system, $D$
the corresponding Dynkin diagram, $J\neq\emptyset$ a subset of $D$ and $\Gamma$ a group of automorphisms of $D$.
A triple $(\Phi,J,\Gamma)$ satisfies that
any $\Gamma$-orbit $O\subseteq \hat J$ is invariant
under the opposition involution of the Dynkin diagram $(\hat D\setminus \hat J)\cup O$,
if and only if it is, up to an automorphism of $D$, one in the following list:
\begin{enumerate}
\item $\Phi=A_n$, $n\ge 1$; $|\Gamma|=1$; $J=\{d,\,2d,\,\ldots,\,rd\}$ for some $d,r\ge 1$ such that
$d\cdot(r+1)=n+1$.

\item $\Phi=A_n$, $n\ge 2$; $|\Gamma|=2$; $J=\{d,\,2d,\,\ldots,\,rd,\,n+1-d,n+1-2d,\ldots,n+1-rd\}$
for some $d,r\ge 1$ such that $d\mid n+1$, $2rd\le n+1$.

\item $\Phi=B_n$, $n\ge 2$; $|\Gamma|=1$; $J=\{d,\,2d,\,\ldots,\,rd\}$ for some $d,r\ge 1$
such that $d$ is even or $d=1$, $rd\le n$.

\item $\Phi=C_n$, $n\ge 2$; $|\Gamma|=2$; $J=\{d,\,2d,\,\ldots,\,rd\}$ for some $d,r\ge 1$
such that $rd\le n$.

\item $\Phi=D_n$, $n\ge 4$; $|\Gamma|=1$; $J=\{d,\,2d,\,\ldots,\,rd\}$ for some $d,r\ge 1$
such that $d$ is even or $d=1$, $rd\le n$, $rd\neq n-1$.

\item $\Phi=D_n$, $n\ge 4$; $|\Gamma|=2$; $J=\{d,\,2d,\,\ldots,\,rd\}$ (or $J=\{d,2d,\ldots,(r-2)d,n-1,n\}$ in the case $rd=n-1$)
for some $d,r\ge 1$ such that $d$ is even or $d=1$, $rd\le n-1$.

\item $\Phi=D_4$; $|\Gamma|=3$ or $|\Gamma|=6$; $J=\{2\},\,D$.

\item $\Phi=E_6$; $|\Gamma|=1$; $J=\{2\},\,\{1,\,6\},\,\{2,\,4\},\,D$.

\item $\Phi=E_6$; $|\Gamma|=2$; $J=\{2\},\,\{1,\,6\},\,\{2,\,4\},\,\{1,\,6,\,2\},\,D$.

\item $\Phi=E_7$; $|\Gamma|=1$; $J=\{1\},\,\{6\},\,\{7\},\,\{1,\,3\},\,\{1,6\},\,\{1,6,7\},\,\{1,3,4,6\},\,D$.

\item $\Phi=E_8$; $|\Gamma|=1$; $J=\{1\},\,\{8\},\,\{1,\,8\},\,\{7,\,8\},\,\{1,\,6,\,7,\,8\},\,D$.

\item $\Phi=F_4$; $|\Gamma|=1$; $J=\{1\},\,\{4\},\,\{1,\,4\},\,D$.

\item $\Phi=G_2$; $|\Gamma|=1$; $J=\{2\},\,D$.
\end{enumerate}
\end{prop}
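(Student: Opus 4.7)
The plan is to carry out a case-by-case analysis for each irreducible root system $\Phi$, based on two structural observations. First, the opposition involution of a Dynkin diagram acts componentwise, and is trivial on components of type $B_m$, $C_m$, $D_{2k}$, $E_7$, $E_8$, $F_4$, $G_2$, while it is the unique non-trivial diagram involution on components of type $A_m$ with $m\geq 2$, $D_{2k+1}$, or $E_6$. Second, since $\hat J = J\cup(\hat D\setminus D)$, every $\Gamma$-orbit $O\subseteq\hat J$ is either \emph{interior} (contained in $J$) or \emph{extended} (contained in $\hat D\setminus D$), and both kinds must be checked.

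For each pair $(\Phi,\Gamma)$, one parametrizes the $\Gamma$-invariant nonempty subsets $J\subseteq D$, and for each orbit $O\subseteq\hat J$ identifies the connected components of the subdiagram $X_O := (\hat D\setminus\hat J)\cup O = (D\setminus J)\cup O$, determines their types, and decides whether the opposition involutions of those components preserve $O$. When $O=\{v\}$ is a singleton, the condition reduces to $v$ being fixed by opposition of its component; in an $A_m$-component this means $v$ sits at the midpoint.

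For the classical types, removing $J$ from $D$ decomposes the diagram into a chain of $A$-pieces (with possibly one $B$-, $C$-, or $D$-piece at an end). The midpoint condition on each interior orbit forces the elements of $J$ to form an arithmetic progression with common difference $d$ starting at $d$. The extended orbit contributes an additional constraint that depends on where the extra vertex attaches: in $A_n$, $\hat D$ is a cycle and the two ends of the progression interact, yielding the divisibility $d(r+1)=n+1$ of item (1); in $B_n$ and $D_n$, the extra vertex attaches near vertex $2$, the component of the extended vertex in $X_O$ is a $D_d$-piece, and its opposition is trivial iff $d$ is even, forcing the parity condition in items (3), (5), (6) (with $d=1$ allowed because then vertex $2\in J$ and the extended vertex is isolated); in $C_n$ the extra vertex attaches at vertex $1$ with a double edge, the relevant component is $C_d$, whose opposition is always trivial, so no parity constraint arises (item 4). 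For $A_n$ with $|\Gamma|=2$, the symmetry of $J$ is already enforced by $\Gamma$-invariance and the condition on the extended orbit $\{v_0\}$ becomes automatic, giving item (2). For $D_n$ with $|\Gamma|=2$, the fork vertices $\{n-1,n\}$ require separate treatment, producing the alternative form of $J$ in item (6) when $rd=n-1$.

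For the exceptional types $E_6,E_7,E_8,F_4,G_2$ and for $D_4$ with $|\Gamma|\in\{3,6\}$, the number of $\Gamma$-invariant subsets of $D$ is finite, so one enumerates them and checks the condition by direct inspection. The main obstacle is the bookkeeping: tracking the interaction of the $\Gamma$-action with the extra vertex of $\hat D$, distinguishing the subtle cases in type $D_n$ near the fork, and identifying the type of each sub-Dynkin diagram $X_O$ correctly. The argument for each individual case is combinatorially elementary, but the sheer number of subcases calls for careful organization, most naturally indexed by the type of the component of $X_O$ containing a chosen $v\in O$.
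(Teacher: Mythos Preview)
Your proposal follows essentially the same approach as the paper: direct enumeration for the exceptional types and triality $D_4$, and for the classical types a reduction to the $A$-type midpoint analysis yielding the arithmetic-progression form of $J$, followed by the check on the extended orbit to extract the divisibility or parity constraints. The paper organizes the classical cases slightly differently---it removes the largest element $i_r$ of $J$ so that $J\setminus\{i_r\}$ sits inside an $A_{i_r-1}$ subdiagram and invokes the already-established $A_n$ case---whereas you analyze each orbit's component type directly; but the underlying combinatorics and the level of detail are the same.
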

\begin{proof}
If $\Phi$ is an exceptional root system or $D_4$, the result is verified by an easy
try-out. Consider the case $\Phi=A_n$, $|\Gamma|=1$. The opposition
involution of $A_n$ is the non-trivial automorphism of $D$, hence if $|J|=1$ then $n=2k+1$ and $J=\{k+1\}$, the
middle vertex. Proceeding by induction on $|J|$, we see that $J=\{d,\,2d,\,\ldots,\,rd\}$ for some
$d\ge 1$ such that $d|n+1$, $d\cdot(r+1)=n+1$, and any such $J$ is valid. If $|\Gamma|=2$ then since $J$ is $\Gamma$-invariant, $J$ contains
a vertex $k$ if and only if it contains $n+1-k$; the opposition involution condition implies that
$J=\{d,\,2d,\,\ldots,\,rd\}\cup\{n+1-d,\,n+1-2d,\,\ldots,\,n+1-rd\}$, and any such $J$ is valid.

Now consider the case $\Phi=B_n,\,C_n,\,D_n$ and $|\Gamma|=1$. Let $J=\{i_1,\,i_2,\,\ldots,\,i_r\}$, $i_1<i_2<\ldots<i_r$.
If $\Phi=D_n$ and $i_r>n-2$, we may assume $i_r=n$ applying an automorphism of $D$.
Then $J\setminus\{i_r\}$ lies in the connected component of $D\setminus\{i_r\}$
of type $A_{i_r-1}$. Since $J\setminus\{i_r\}$ satisfies the opposition involution condition, by the $A_n$ case
$J\setminus\{i_r\}$ is of the form $\{d,\,2d,\,\ldots,\,(r-1)d\}$ for some $d\ge 1$ such that $i_r=rd$.
Therefore, $J=\{d,\,2d,\,\ldots,\,rd\}$, as required. If $\Phi=C_n$, this finishes the proof,
since any such $J$ satisfies the opposition involution condition.
If $\Phi=D_n$ or $B_n$, such $J$ does not satisfy the opposition involution condition for
$O=\hat J\setminus J$ if $d$ is odd $>1$, so this case is excluded. The case $\Phi=D_n$, $|\Gamma|=2$ is verified analogously.
\end{proof}\medskip

\section{Tits indices}\label{sec:ind}

We now start the classification of semisimple algebraic groups over $S=\Spec R$, where $R$ is
a connected semilocal ring. The problem allows two immediate reductions.
First, every semisimple group $G$ is completely determined by its root datum and the corresponding simply connected
group $G^{sc}$, so we can assume that $G$ is simply connected.

Second, if the Dynkin diagram $D$ of $G$ is not connected (that is, the root system is not irreducible), we can present $D$ as the disjoint union of its \emph{isotypic} components
$D_t$ (it means that we collect isomorphic components together), and then we have a canonical decomposition
$G\simeq\prod G_t$, where $G_t$ is a group over $S$ with the Dynkin diagram $D_t$ (Exp.~XXIV Prop.~5.5).
Further, if $D_t$ is the disjoint union of $n_t$ copies of a connected graph $D_{0,\,t}$, there exists
a canonical \'etale extension $S_t/S$ of degree $n_t$ and a group $G_{0,\,t}$ over $S_t$ such that
$G_t\simeq R_{S_t/S}(G_{0,\,t})$ (Exp.~XXIV Prop.~5.9). So we can assume that $D$ is connected,
that is, $G$ is a {\it simple} algebraic group.

Our reasoning will be based on Theorem~\ref{thm:levi}, which implies that an oriented semisimple simply connected
algebraic group $G$ is determined, up to an isomorphism, by its Tits index and the isomorphism
class of its anisotropic kernel $G_{an}$, subject to certain conditions on the Tits algebras,
together with an isomorphism $\Dyn(G_{an})\simeq\tf_{\min}$. Thus the classification consists in listing all
possible Tits indices of simple algebraic groups, and, for any given index, the conditions on the corresponding
anisotropic kernels. The necessary combinatorial restriction on a Tits index stated in Lemma~\ref{lem:opinv}
reduces possibilities to those listed in Proposition~\ref{prop:listcomb}. For some of them conditions on
the Tits algebras lead to a contradiction; for the rest they give criteria that anisotropic kernels must satisfy.


We represent Tits indices
graphically by Dynkin diagrams $D$ with the vertices in $J$ being circled; nontrivial $*$-action
is indicated by arrows $\longleftrightarrow$.
We also use the Tits notation ${}^mX_{n,r}^k$
for the groups of specific indices (see~\cite{Tits66}).
\medskip

We begin with simple groups of type $A_n$.
The split simple simply connected group of type $A_n$ over $R$ is $\SL_{n+1}(R)$;
the corresponding adjoint group is $\PGL_{n+1}(R)=\Aut(\MM_{n+1}(R))$. So the oriented simple simply connected
groups of inner type $A_n$ are of the form $\SL_1(A)$, where $A$ is an Azumaya algebra
over $R$ of degree $n+1$, uniquely determined up to an isomorphism. Obviously $A$ is the Tits algebra of
$\SL_1(A)$ corresponding to the natural representation of $\SL_{n+1}(R)$ in $R^{n+1}$; so
$[A]=\beta_{\SL_1(A)}(\omega_1)$. The change of orientation corresponds to the replacement of $A$ with $A^{op}$.
Note that $\SL_1(A)\simeq\SL_1(A^{op})$ as groups, the isomorphism being $g\mapsto g^{-1}$.

\begin{lem}\label{lem:Div}
Assume that $\SL_1(E)$ and $\SL_1(E')$ are anisotropic, and $[E]=[E']$ in $\Br(R)$. Then $E\simeq E'$.
\end{lem}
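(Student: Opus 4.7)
The plan is to pass to residue fields, apply the classical Wedderburn theorem there, and then lift the resulting isomorphism to $R$.

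First, I would show that $\SL_1(E)$ is anisotropic over the semilocal ring $R$ if and only if $\SL_1(E\otimes_R k_i)$ is anisotropic over each residue field $k_i=R/\mathfrak{m}_i$. One direction is just restriction; for the other, a proper parabolic subgroup of $\SL_1(E)\otimes k_i$ lies in the scheme of parabolic subgroups of a given type, which is smooth and projective over $R$ (Exp.~XXVI), and over the semilocal base its sections can be extended from the residue field, producing a proper parabolic subgroup over $R$ and contradicting anisotropy there. By the classical field-theoretic statement, anisotropy of $\SL_1(E\otimes k_i)$ is equivalent to $E\otimes k_i$ being a central division algebra over $k_i$, and the same holds for $E'$. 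Since $[E\otimes k_i]=[E'\otimes k_i]$ in $\Br(k_i)$ and both are central division algebras of the same degree $n+1$, Wedderburn's theorem gives an isomorphism $E\otimes k_i\simeq E'\otimes k_i$ of $k_i$-algebras.

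Second, I would lift this family of residual isomorphisms to a global isomorphism $E\simeq E'$. Since $\Pic(R)=0$ for semilocal $R$ and $\deg E=\deg E'$, the equality $[E]=[E']$ in $\Br(R)$ refines to an isomorphism of Azumaya $R$-algebras $E\otimes_R M_m(R)\simeq E'\otimes_R M_m(R)$ for some $m\ge 1$. By Morita theory applied to the Azumaya algebra $E'$, this corresponds to a finitely generated projective right $E'$-module $N$ with $\End_{E'}(N)\simeq E$ and with $R$-rank equal to $\mathrm{rk}_R(E')$. The residual isomorphisms $E\otimes k_i\simeq E'\otimes k_i$ translate into $N\otimes k_i\simeq E'\otimes k_i$ as right $E'\otimes k_i$-modules. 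Applying Nakayama's lemma at every maximal ideal of $R$ simultaneously (possible precisely because $R$ is semilocal), one lifts these to a global isomorphism $N\simeq E'$ of right $E'$-modules, and then $E\simeq\End_{E'}(E')\simeq E'$, as required.

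The main obstacle is the simultaneous lifting in the second step: the semilocal hypothesis is what allows Nakayama to be applied uniformly across all maximal ideals at once, and the vanishing of $\Pic(R)$ is what promotes Brauer equality to an honest isomorphism of Azumaya algebras after stabilization by $M_m(R)$. Structurally, the argument amounts to identifying both $E$ and $E'$ with the unique minimal-degree (``division-type'') representative of their common Brauer class over $R$, the semilocal analogue of Wedderburn's structure theorem.
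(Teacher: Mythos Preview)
Your first step contains a genuine error: it is \emph{not} true that anisotropy of $\SL_1(E)$ over a semilocal ring $R$ implies anisotropy over each residue field $k_i$. The claim that a $k_i$-point of the smooth projective scheme of parabolic subgroups extends to an $R$-point is false unless $R$ is Henselian. A concrete counterexample: take $R=\ZZ_{(5)}$ and $E=(-1,-1)_R$, the Hamilton quaternions. The Severi--Brauer conic $x^2+y^2+z^2=0$ has no $R$-points (it has no real points), so $\SL_1(E)$ is anisotropic over $R$; yet over $\mathbb{F}_5$ the point $(1:2:0)$ lies on the conic, so $\SL_1(E)\otimes\mathbb{F}_5\simeq\SL_2(\mathbb{F}_5)$ is split. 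Thus you cannot conclude that $E\otimes k_i$ is a division algebra, and your derivation of $\deg E=\deg E'$ collapses. Without that equality of degrees, your second step cannot start: the Morita module $N$ need not satisfy $N\otimes k_i\simeq E'\otimes k_i$, so the Nakayama lifting never gets off the ground.

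The paper avoids residue fields entirely and extracts $\deg E=\deg E'$ from group theory over $R$ itself. From $[E]=[E']$ and freeness of projectives one gets $\MM_n(E)\simeq\MM_m(E')$, hence $\SL_n(E)\simeq\SL_m(E')$ as oriented groups; since $\SL_1(E)$ and $\SL_1(E')$ are anisotropic, $\SL_1(E)^n$ and $\SL_1(E')^m$ are both anisotropic kernels of this single group, and conjugacy of minimal parabolics over a semilocal base (Exp.~XXVI Cor.~5.7) forces them to be isomorphic, whence $n=m$ and $\deg E=\deg E'$. Theorem~\ref{thm:levi}\,\eqref{item:bij} then finishes. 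If you want to salvage your Morita/Nakayama argument for the endgame, you could graft it onto this group-theoretic derivation of the degree equality; but the residue-field route to that equality does not work.
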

\begin{proof}
Since projective modules over $R$ are free, $[E]=[E']$ means that $\MM_n(E)\simeq\MM_m(E')$ for some
$n$ and $m$. Then $\SL_n(E)$ and $\SL_m(E')$ are isomorphic as oriented groups. Now $\SL_1(E)^n$ and
$\SL_1(E')^m$ are both anisotropic kernels of $G$, so they are isomorphic. In particular, they
have the same type, that is $m=n$, and the degrees of $E$ and $E'$ are equal. Theorem~\ref{thm:levi}~\eqref{item:bij}
implies that $\SL_1(E)^m$ and $\SL_1(E')^m$ are isomorphic as oriented groups, hence $\SL_1(E)$ and $\SL_1(E')$
are isomorphic as oriented groups, that is $E\simeq E'$.
\end{proof}\medskip

\begin{thm}[$\mathbf{{}^1A_n}$]\label{thm:An}
Every simple simply connected group $G$ of inner type $A_n$ over $R$ has the Tits index $({}^1A_n,\,J)$, where
$J=\{d,\,2d,\,\ldots,\,rd\}$, $n+1=(r+1)d$:
\begin{equation}\tag{${}^1A_{n,\,r}^{(d)}$}
\xymatrix@C=10pt@R=5pt{
\bullet\ar@{.}[r]&\bullet\ar@{-}[r]&*+[o][F]{\bullet}\ar@{-}[r]&\bullet\ar@{.}[r]&\bullet\ar@{-}[r]&*+[o][F]{\bullet}\ar@{-}[r]&\bullet
\ar@{.}[r]&\bullet\ar@{-}[r]&*+[o][F]{\bullet}\ar@{-}[r]&\bullet\ar@{.}[r]&\bullet\\
&&d&&&2d&&&rd
}
\end{equation}

$G$ is isomorphic to $\SL_{r+1}(E)$, and the anisotropic kernel is $\SL_1(E)^r$, where $\deg E=d$.

\end{thm}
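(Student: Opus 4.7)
The plan is to combine the combinatorial restriction of Proposition~\ref{prop:listcomb}~(1) with an explicit Morita-theoretic description of inner type $A_n$ groups as $\SL_{r+1}(E)$ for Azumaya algebras $E$, and then apply Theorem~\ref{thm:levi}~\eqref{item:bij} to conclude.

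First, as recalled in the paragraphs preceding the theorem, every oriented simple simply connected group of inner type $A_n$ has the form $\SL_1(A)$ for an Azumaya $R$-algebra $A$ of degree $n+1$, unique up to isomorphism, with Tits algebra $\beta_G(\omega_1)=[A]\in\Br(R)$. Since $R$ is semilocal, finitely generated projective $R$-modules are free, and Morita theory lets me write $A\simeq\MM_{r+1}(E)$ for some Azumaya algebra $E$ Brauer-equivalent to $A$, chosen with $\SL_1(E)$ anisotropic; uniqueness of such $E$ up to isomorphism is furnished by Lemma~\ref{lem:Div}. The canonical identification $\SL_1(\MM_{r+1}(E))\simeq\SL_{r+1}(E)$ then yields $G\simeq\SL_{r+1}(E)$, and setting $d:=\deg E$ we have $n+1=(r+1)d$.

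Next, I would exhibit a minimal parabolic of $\SL_{r+1}(E)$ explicitly: the stabilizer $P$ of the standard flag $0\subset E\subset E^2\subset\cdots\subset E^{r+1}$ of free right $E$-submodules. By Exp.~XXVI, $P$ is a parabolic subgroup and, since $S$ is affine, it admits a Levi subgroup $L$, which I identify with the block-diagonal subgroup scheme; its derived subgroup is a product of copies of $\SL_1(E)$, anisotropic by the choice of $E$. By Exp.~XXVI Prop.~1.20 this forces $P$ to be minimal. Passing to a splitting \'etale covering and computing inside the split group $\SL_{n+1}$, I would read off $\tf(P)$ as the subset $J=\{d,2d,\ldots,rd\}$ in the standard Bourbaki labelling of $A_n$.

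Finally, Proposition~\ref{prop:listcomb}~(1) shows that every inner type $A_n$ group has Tits index of the form $\{d',2d',\ldots,r'd'\}$ with $d'(r'+1)=n+1$, and the construction above realizes such a group for each admissible pair $(d',r')$. Theorem~\ref{thm:levi}~\eqref{item:bij} applied to the given $G$ and to $\SL_{r+1}(E)$ (with $E$ arising from the Morita decomposition of $A$) ensures that the two are isomorphic as oriented groups, since they have the same Tits index and the same anisotropic kernel. The main obstacle I anticipate is the scheme-theoretic identification of $L$ and the explicit reading-off of $\tf(P)$ over a semilocal base: over a field this is completely classical, but over $\Spec R$ it requires careful use of Morita equivalence and \'etale descent via Exp.~XXVI Cor.~2.3 and related statements. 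Once this is in hand, uniqueness of $E$ within its Brauer class is handled by Lemma~\ref{lem:Div}, and the classification follows.
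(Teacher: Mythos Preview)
Your argument has a genuine gap at the Morita step. You assert that ``Morita theory lets me write $A\simeq\MM_{r+1}(E)$ for some Azumaya algebra $E$ Brauer-equivalent to $A$, chosen with $\SL_1(E)$ anisotropic.'' Over a field this is Wedderburn's theorem, but over a connected semilocal ring the existence of such an $E$ is exactly what the paper \emph{deduces from} this theorem: see the paragraph immediately following the proof, where $\ind A$ is first defined. Morita theory together with freeness of projectives only tells you that if $[A]=[E]$ then $\MM_m(A)\simeq\MM_{m'}(E)$ for suitable $m,m'$ (this is how Lemma~\ref{lem:Div} is proved); it does not by itself produce a representative $E$ in the Brauer class with $\SL_1(E)$ anisotropic, nor does it show that an $E$ of minimal degree has this property. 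As written, your argument is circular.

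The paper's route avoids this. Starting from the Tits index $J=\{d,2d,\ldots,rd\}$ (via Lemma~\ref{lem:opinv} and Proposition~\ref{prop:listcomb}), one knows only that the anisotropic kernel is $\SL_1(E_1)\times\cdots\times\SL_1(E_{r+1})$ for \emph{a priori} unrelated Azumaya algebras $E_i$ of degree $d$. The key step you have skipped is the Tits-algebra computation: from $\alpha_{id}=2\omega_{id}-\omega_{id-1}-\omega_{id+1}$ and Theorem~\ref{thm:levi}~\eqref{item:titsrestr} one obtains $0=\beta_{G_{an}}(\alpha'_{id})=[E_i]-[E_{i+1}]$, and then Lemma~\ref{lem:Div} forces $E_i\simeq E_{i+1}$. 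Only at this point does one have a single $E$ with anisotropic kernel $\SL_1(E)^{r+1}$, and the identification $G\simeq\SL_{r+1}(E)$ follows from Theorem~\ref{thm:levi}. In short, the paper uses the group-theoretic structure (Tits index plus Tits-algebra constraints) to \emph{establish} the Wedderburn-type decomposition of $A$, rather than assuming it as input. Your explicit description of the minimal parabolic of $\SL_{r+1}(E)$ and its Levi is fine and is implicitly what the paper uses in the final comparison step; but it cannot substitute for the missing $[E_i]=[E_{i+1}]$ argument.
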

\begin{proof}
Let $({}^1A_n,\,J)$ be the Tits index of $G$; we have $J=\{d,\,2d,\,\ldots,\,rd\}$ for some $d$ with $n+1=(r+1)d$ by
Lemma~\ref{lem:opinv} and Proposition~\ref{prop:listcomb}. The anisotropic kernel $G_{an}$ is isomorphic to $\SL_1(E_1)\times\ldots\SL_1(E_{r+1})$
for some Azumaya algebras $E_1$, \ldots, $E_r$. The Cartan matrix of $A_n$ shows that
$\alpha_{i\cdot d}=2\omega_{i\cdot d}-\omega_{i\cdot d-1}-\omega_{i\cdot d+1}$ for $i=1,\,\ldots,\,r$. By Theorem~\ref{thm:levi}, we have
$$
0=\beta_{G_{an}}(\alpha'_{i\cdot d})=\beta_{\SL_1(E_i)}(\omega_1)-\beta_{\SL_1(E_{i+1})}(\omega_1)=[E_i]-[E_{i+1}].
$$
Now Lemma~\ref{lem:Div} implies that all $E_i$ are isomorphic. Set $E=E_1$; then $\SL_{r+1}(E)$ has the same
Tits index and the same anisotropic kernel as $G$, so by Theorem~\ref{thm:levi} we have $G\simeq\SL_{r+1}(E)$, as claimed.
\end{proof}\medskip

The above result implies that for any Azumaya algebra $A$ over $R$, the group
$G=\SL_1(A)$ is isomorphic to $\SL_{r+1}(E)$, where $E$ is an Azumaya algebra such that $\SL_1(E)$
is anisotropic. In this case the degree of $E$ is called the \emph{index} of $A$ and is denoted by $\ind A$; obviously $\ind A$ divides
$\deg A$. The \emph{exponent} $\exp A$ of $A$ is the order of $[A]$ in $\Br(R)$. We will need the following result:

\begin{prop}\label{prop:exp} Let $A$ be an Azumaya algebra. Then $\exp A$ divides $\ind A$, and they have the same prime factors.
\end{prop}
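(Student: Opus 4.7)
My plan is to prove the two assertions separately: $\exp A\mid\ind A$ follows immediately from Theorem~\ref{thm:An}, while the coincidence of prime factors reduces, via the primary decomposition of the Brauer group, to a Sylow-type argument that bootstraps on the already-proved divisibility.

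For $\exp A\mid\ind A$: Theorem~\ref{thm:An} produces an Azumaya algebra $E$ of degree $d=\ind A$ with $[E]=[A]$ and $\SL_1(A)\simeq\SL_{r+1}(E)$. The central exact sequence
\[
1\to\mu_d\to\SL_1(E)\to\PGL_1(E)\to 1
\]
shows that $[E]$ lies in the image of $\HH^2(R,\mu_d)\to\HH^2(R,\Gm)$, and since $\mu_d$ is the $d$-torsion of $\Gm$ this image is killed by $d$; hence $d\cdot[A]=0$ and $\exp A\mid\ind A$.

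For the second assertion only the direction $p\mid\ind A\Rightarrow p\mid\exp A$ needs argument. Decomposing the torsion class $[A]=\sum_\ell c_\ell\in\Br(R)$ into its primary components and choosing anisotropic representatives $E_\ell$ via Theorem~\ref{thm:An} (so that $\deg E_\ell=\ind c_\ell$), the key claim I would establish is that $\ind c_\ell$ is a power of $\ell$ for every $\ell$. Granting this, the degrees $\deg E_\ell$ are pairwise coprime, and the standard bounds $\ind(X\otimes Y)\mid\ind X\cdot\ind Y$ together with $\ind X\mid\ind(X\otimes Y)\cdot\ind Y$ give $\ind A=\prod_\ell\ind c_\ell$. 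The prime factors of this product are exactly $\{\ell:c_\ell\ne 0\}$, which coincides with the set of prime factors of $\exp A=\mathrm{lcm}_\ell\exp c_\ell$, as required.

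To prove the key claim, let $E$ represent a class of $p$-primary order with $\SL_1(E)$ anisotropic, so $\deg E=\ind[E]$. Over a connected semilocal ring $E$ contains a maximal commutative étale subalgebra $L$ of rank $\deg E$. Suppose for contradiction that a prime $q\ne p$ divides $\deg E$, and pick an étale $R$-subalgebra $L'\subset L$ whose rank equals the prime-to-$q$ part of $\deg E$. Then $L\otimes_R L'$ splits $E\otimes_R L'$ and has $L'$-rank a power of $q$, so $\ind(E\otimes_R L')$ is a power of $q$. Applying the first part of the proposition over $L'$, $\exp(E\otimes_R L')$ is also a power of $q$; but it divides $\exp E$, a power of $p$, and so $\exp(E\otimes_R L')=1$. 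Hence $L'$ already splits $E$, forcing $\ind E$ to divide $\mathrm{rk}_R L'<\deg E$, a contradiction.

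The main obstacle is the construction of the sub-étale algebra $L'\subset L$ of the prescribed rank over the semilocal base: in the field case this is just picking a subfield of the correct degree in a separable extension, but over a connected semilocal ring one must decompose $L$ into its connected components and apply a Galois-type correspondence on each (or alternatively pass to the Galois closure of $L/R$ and descend). With that combinatorial step in place, the rest of the proof is a direct translation of the classical Brauer-group argument, relying only on Theorem~\ref{thm:An} and the first part of the proposition.
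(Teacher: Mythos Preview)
Your argument for $\exp A\mid\ind A$ is essentially the paper's: both show $(\ind A)\cdot[A]=0$, the paper via the observation that $(\deg E)\,\omega_1$ lies in the root lattice so that $\beta_{\SL_1(E)}((\deg E)\,\omega_1)=0$, you via the boundary of $1\to\mu_d\to\SL_1(E)\to\PGL_1(E)\to 1$.

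For the second assertion the paper gives no argument: it simply cites Gabber's theorem \cite[Ch.~II, Thm.~1]{Ga}, a nontrivial result for Azumaya algebras over general bases. Your self-contained attempt has a genuine gap at the step you yourself flag. You assert that inside a maximal \'etale subalgebra $L\subset E$ one can find an \'etale $R$-subalgebra $L'\subset L$ of rank equal to the prime-to-$q$ part of $\deg E$, and that over a field this is ``just picking a subfield of the correct degree in a separable extension''. But a separable field extension of degree $n$ need not admit subextensions of every divisor of $n$: if its Galois closure has group $S_n$ then the point stabilizer $S_{n-1}$ is maximal, so there are no proper intermediate fields at all. Since for a division algebra $E$ a maximal \'etale subalgebra $L$ is a field, your $L'$ need not exist even in the field case.

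The classical repair is not to look inside $L$ but to pass to its Galois closure $M$ and take the fixed algebra $M^Q$ of a Sylow-$q$-subgroup $Q$ of $\Gal(M/R)$: then $M^Q$ has rank prime to $q$ over $R$ while $M/M^Q$ has $q$-power rank, so $\ind_{M^Q}(E_{M^Q})$ is a $q$-power and your concluding steps apply. Note however that $M^Q\not\subset L$ in general, and this rearranged argument relies on the lemma that $\ind_R A$ divides the rank of any finite \'etale $R$-algebra splitting $A$; over a field this is the standard module-theoretic fact, but over a semilocal base it is not obvious and is part of what one must establish. (A minor slip: with $L'\subset L$ it is $L$ viewed over $L'$, not $L\otimes_R L'$, that has $L'$-rank $q^a$.)
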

\begin{proof}
The first part follows from the fact that $[A]=[E]=\beta_{\SL_1(E)}(\omega_1)$, and
$(\deg E)\omega_1$ belongs to the root lattice of $\SL_1(A)$.
The second part follows from \cite[Ch.~II, Thm.~1]{Ga}.
\end{proof}\medskip

Let $R'/R$ be an \'etale extension of degree $n$. We can interpret the corestriction homomorphism $\cores_{R'/R}\colon\Br(R')\to\Br(R)$
as follows. If $A$ is an Azumaya algebra over $R'$ of degree $d$, $R_{R'/R}(\SL_1(A))$ is a group of type $nA_{d-1}$ over $R$, with the
$*$-action permuting the copies of $A_{d-1}$. Now $\cores_{R'/R}([A])=\beta_{R_{R'/R}(\SL_1(A))}(\omega)$, where $\omega$ is the sum of
the fundamental weights $\omega_1$ of all copies of $A_{d-1}$ (cf. \cite[\S~5.3]{Tits71}).

\setcounter{thm}{2}
\begin{thm}[$\mathbf{{}^2A_n}$]\label{thm:An2}
Every simple simply connected group $G$ of type ${}^2A_n$ over $R$ has the Tits index $({}^2A_n,\,J)$,
where $J=\{d,\,2d,\,\ldots,\,rd,\,n+1-rd,\,\ldots,\,n+1-2d,\,n+1-d\}$ for some $r\ge 0$, $d>0$ such that
$d\mid n+1$, $2rd\le n+1$:

\medskip

\begin{equation}\tag{${}^2A_{n,\,r}^{(d)}$}
\xymatrix@C=6pt@R=5pt{
\bullet\ar@{.}[r]&\bullet\ar@{-}[r]&*+[o][F]{\bullet}\ar@{<->}@/^1.5pc/[rrrrrrrrrrr]\ar@{-}[r]&\bullet\ar@{.}[r]&\bullet\ar@{-}[r]&*+[o][F]{\bullet}
\ar@{<->}@/^1pc/[rrrrr]\ar@{-}[r]&\bullet
\ar@{.}[r]&\bullet\ar@{-}[r]&\bullet\ar@{.}[r]&\bullet\ar@{-}[r]&*+[o][F]{\bullet}\ar@{-}[r]&\bullet
\ar@{.}[r]&\bullet\ar@{-}[r]&*+[o][F]{\bullet}\ar@{-}[r]&\bullet\ar@{.}[r]&\bullet\\
&&{\quad d\quad }&&&{\quad rd\quad }&&&&&n+1-rd&&&n+1-d
}
\end{equation}

Denote by $\Spec R'$ the orbit corresponding to $\{1,\,n\}$ (so that $R'/R$ is a connected quadratic \'etale extension). The possible anisotropic
kernels are the following:
\begin{itemize}
\item $H\times R_{R'/R}(\SL_1(E))^r$, where $E$ is an Azumaya algebra over $R'$ with $\ind E=\deg E=d$, $H$ is a simple simply connected anisotropic of
type ${}^2A_{n-2rd}$ over $R$ whose orbit $O$ corresponding to $\{1,\,n-2rd\}$ is isomorphic to $\Spec R'$, such that $\beta_{H_O}(\omega_1)=[E]$,
when $n-2rd\ge 2$;
\item $\SL_1(A)\times R_{R'/R}(\SL_1(A_{R'}))^r$, where $A$ is an Azumaya algebras $A$ over $R$ such that $\ind A=\deg A=2$ and $\ind A_{R'}=d$,
when $n-2rd=1$;
\item $R_{R'/R}(\SL_1(E))^r$, where $E$ is an Azumaya algebra over $R$ such that $\ind E=\deg E=d$ and $\cores_{R'/R}([E])=0$, when $n-2rd\le 0$.
\end{itemize}
\end{thm}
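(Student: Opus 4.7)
The plan is to mirror the proof of Theorem~\ref{thm:An}, passing to the quadratic \'etale extension $R'/R$ to reduce to the inner case.

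First, by Lemma~\ref{lem:opinv} and Proposition~\ref{prop:listcomb}(2), the Tits index of $G$ has the claimed form. Decomposing $D\setminus J$ under the $*$-action, each orbit outside the central region $\{rd+1,\ldots,n-rd\}$ is a pair of $A_{d-1}$-components swapped by $*$; such an orbit contributes to $G_{an}$ a factor $R_{R'/R}(\SL_1(E_i))$, where $E_i$ is an Azumaya algebra of degree~$d$ over $R'$. The central part yields a simply connected anisotropic factor of type ${}^2A_{n-2rd}$ if $n - 2rd \geq 2$, a single $*$-fixed vertex (hence $\SL_1(A)$ for an Azumaya algebra $A$ of degree~$2$ over $R$) if $n - 2rd = 1$, and nothing otherwise.

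To derive necessity I would base-change to $R'$ and apply Theorem~\ref{thm:An}. The group $G_{R'}$ is of inner type $A_n$ with anisotropic kernel $(G_{an})_{R'}$, which decomposes over $R'$ as $\SL_1(E_i)\times\SL_1(\sigma^*E_i)$ for each Weil-restriction piece together with the corresponding split of the central piece. Theorem~\ref{thm:An} forces all the $\SL_1(\cdot)$ factors here to be attached to mutually isomorphic Azumaya algebras, which immediately yields $E_1\simeq\cdots\simeq E_r=:E$ and identifies $E$ with the central datum: $E\simeq A_{R'}$ when $n-2rd=1$ (so $\ind A_{R'}=d$; anisotropy of $\SL_1(A)$ forces $\ind A=\deg A=2$), and $[E]=\beta_{H_O}(\omega_1)$ for the end-orbit $O$ of $H$ when $n-2rd\geq 2$. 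In the remaining case $n-2rd\leq 0$ there is no central piece; one instead applies Theorem~\ref{thm:levi}\eqref{item:titsrestr} to the $*$-invariant element $\alpha_{rd}+\alpha_{n+1-rd}$, whose restriction to $\Cent(G_{an,qs})$ lies entirely in the $r$-th Weil-restriction piece and, by the formula $\cores_{R'/R}([E])=\beta_{R_{R'/R}(\SL_1(E))}(\omega)$ recalled before the theorem, is equivalent to $\cores_{R'/R}([E])=0$. Conversely, given anisotropic data satisfying the stated conditions, the hypotheses of Theorem~\ref{thm:levi}\eqref{item:leviembed} are met and produce a $G$ of the desired Tits index; uniqueness of $G_{an}$ given $G$ is supplied by Theorem~\ref{thm:levi}\eqref{item:bij}.

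The principal technical difficulty is the careful tracking of how the $*$-action on ${}^2A_n$ restricts to each Weil-restriction piece: the outer automorphism of $A_n$ reverses the Dynkin diagram, so it acts on each pair of $A_{d-1}$-components by a swap combined with a reversal inside each copy. Consequently the $*$-invariant cocharacter whose $\beta$-image equals $\cores_{R'/R}([E])$ is $\omega_{d-1}$ of one copy plus $\omega_1$ of the other, not $\omega_1+\omega_1$; matching this to the restriction of $\alpha_{rd}+\alpha_{n+1-rd}$ is what makes the corestriction condition fall out correctly in the case $n-2rd\leq 0$.
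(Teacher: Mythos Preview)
Your strategy of base-changing to $R'$ and invoking Theorem~\ref{thm:An} is a genuine alternative to the paper's argument, but it contains a gap. You assert that $(G_{an})_{R'}$ is the anisotropic kernel of $G_{R'}$; this need not hold, since the central factor $H$ of type ${}^2A_{n-2rd}$ may become isotropic over $R'$. What you actually use is only that $(G_{an})_{R'}$ is the derived Levi of the parabolic $P_{R'}$. Once you know $G_{R'}\simeq\SL_{s+1}(E')$, the derived Levi of any parabolic is a product $\prod_k\SL_{m_k}(E')$, and matching factors with your decomposition (using that each $\SL_1(E_i)$ and $\SL_1(\sigma^*E_i)$ stays anisotropic over $R'$) forces the relevant $m_k=1$ and hence $E_i\simeq E'$ as oriented data. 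So the conclusion survives, but not via the sentence as written. A second small slip: in the case $n-2rd=-1$ the orbit $O_r$ is the single $*$-fixed vertex $\{rd\}$, so the correct $*$-invariant root is $\alpha_{rd}$ itself, not $\alpha_{rd}+\alpha_{n+1-rd}$ (which collapses to $2\alpha_{rd}$ and yields no information).

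The paper sidesteps these issues by never leaving the base: it applies Theorem~\ref{thm:levi}\eqref{item:titsrestr} orbit-by-orbit, computing $\beta_{(G_{an})_{O_i}}(\alpha'_{O_i})$ over each $O_i\simeq\Spec R'$ directly from the Cartan matrix, obtaining $[E_i]-[E_{i+1}]=0$ for $i<r$ and then the case-dependent condition at $O_r$. This treats the four cases $n-2rd\ge 2$, $=1$, $=0$, $=-1$ uniformly and never asks whether $H_{R'}$ is anisotropic. Your route has the virtue of reusing Theorem~\ref{thm:An} wholesale, at the cost of the extra bookkeeping above.
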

\begin{proof}
Let $({}^2A_n,\,J)$ be the Tits index of $G$; we have $J=\{d,\,2d,\,\ldots,\,rd,\,n+1-rd,\,\ldots,\,n+1-2d,\,n+1-d\}$ for
some $r\ge 0$, $d>0$ with $d\mid n+1$, $2rd\le n+1$ by Lemma~\ref{lem:opinv} and Proposition~\ref{prop:listcomb}.
The anisotropic kernel $G_{an}$ is isomorphic to $H_1\times\ldots\times H_r\times H$, where $H_i$ are groups
of outer type $A_{d-1}+A_{d-1}$ with the $*$-action permuting two summands, and $H$ is a group of outer type
${}^2A_{n-2rd}$ when $n-2rd\ge 2$, is isomorphic to $\SL_1(A)$ for some Azumaya algebra $A$ over $R$ with $\ind A=\deg A=2$ when $n-2rd=1$,
and is trivial otherwise. Over $R'$ every $H_i$ becomes inner, hence we have $H_i\simeq R_{R'/R}(\SL_1(E_i))$
for some Azumaya algebra $E_i$ over $R'$, $\ind E_i=\deg E_i=d$.

Denote the orbit corresponding to $\{i\cdot d,\,n+1-i\cdot d\}$ by $O_i$, $i=1,\,\ldots,\,r$.
The Cartan matrix of $A_n$ shows that $\alpha_{i\cdot d}=2\omega_{i\cdot d}-\omega_{i\cdot d-1}-\omega_{i\cdot d+1}$. When $i<r$, by Theorem~\ref{thm:levi}
we have
$$
0=\beta_{{(G_{an})}_{O_i}}(\alpha'_{O_i})=\beta_{\SL_1(E_i)}(\omega_1)-\beta_{\SL_1(E_{i+1})}(\omega_1)=[E_i]-[E_{i+1}].
$$
Lemma~\ref{lem:Div} implies now that all $E_i$ are isomorphic; we set $E=E_1$.

In the case $n-2rd\ge 2$ by Theorem~\ref{thm:levi} we have
$$
0=\beta_{{(G_{an})}_{O_r}}(\alpha'_{O_r})=\beta_{\SL_1(E)}(\omega_1)-\beta_{H_{O_r}}(\omega_1)=[E]-\beta_{H_{O_r}}(\omega_1).
$$

In the case $n-2rd=1$ we have
$$
0=\beta_{{(G_{an})}_{O_r}}(\alpha'_{O_r})=\beta_{\SL_1(E)}(\omega_1)-\beta_{\SL_1(A)_{O_r}}(\omega_1)=[E]-[A_{R'}],
$$
for $O_r\simeq\Spec R'$ as a scheme.

In the case $n-2rd=0$ we have
$$
0=\beta_{{(G_{an})}_{O_r}}(\alpha'_{O_r})=\beta_{\SL_1(E)}(\omega_1)=[E],
$$
hence $E\simeq R'$. $G$ is quasi-split in this case.

Finally, in the case $n-2rd=-1$ we have $O_r\simeq\Spec R$, and hence
$$
0=\beta_{G_{an}}(\alpha'_{O_r})=\cores_{R'/R}(\beta_{\SL_1(E)}(\omega_1))=\cores_{R'/R}([E]).
$$
\end{proof}\medskip

\setcounter{thm}{2}
\begin{thm}[$\mathbf{B_n}$]\label{thm:Bn}
Every simple simply connected group of type $B_n$ over $R$, $n\ge 2$, has the Tits index $(B_n,\,J)$, where
$J=\{1,\,2,\,\ldots,\,r\}$ for some $r\ge 0$:

\begin{equation}\tag{$B_{n,\,r}$}
\xymatrix@C=30pt@R=5pt{
*+[o][F]{\bullet}\ar@{.}[r]&*+[o][F]{\bullet}\ar@{-}[r]&\bullet
\ar@{.}[r]&\bullet\ar@{=>}[r]&\bullet\\
1&r
}
\end{equation}

The possible anisotropic kernels are the following:
\begin{itemize}
\item simple simply connected anisotropic groups of type $B_{n-r}$ over $R$, when $n-r\ge 2$;
\item $\SL_1(A)$, where $A$ is an Azumaya algebras $A$ over $R$ with $\ind A=\deg A=2$, when $n-r=1$.
\end{itemize}
If $n=r$ then $G$ is split.
\end{thm}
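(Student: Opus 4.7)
The plan follows the strategy of Theorem~\ref{thm:An}: combine the combinatorial classification from Proposition~\ref{prop:listcomb} with the Tits algebra obstruction of Theorem~\ref{thm:levi}. By Lemma~\ref{lem:opinv} and case~(3) of Proposition~\ref{prop:listcomb}, the Tits index $(B_n,\,J)$ of $G$ satisfies either $J=\emptyset$ (split $G$) or $J=\{d,\,2d,\,\ldots,\,rd\}$ for some $d,r\ge 1$ with $rd\le n$ and $d=1$ or $d$ even. From the connected components of $D\setminus J$ one reads off
$$G_{an}\simeq \SL_1(E_1)\times\cdots\times\SL_1(E_r)\times H,$$
where each $\SL_1(E_i)$ is anisotropic of type $A_{d-1}$ with $\ind E_i=\deg E_i=d$, and $H$ is anisotropic of type $B_{n-rd}$, interpreted as $H=\SL_1(A)$ with $\ind A=\deg A=2$ when $n-rd=1$, and as $H=1$ when $rd=n$.

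The main step is to rule out $d\ge 2$. I would apply Theorem~\ref{thm:levi}~\eqref{item:titsrestr} to the simple coroot $\alpha_{rd}$ and compute $\beta_{G_{an}}(\alpha'_{rd})$ by restricting to $\Cent(G_{an,qs})$ using the $B_n$ Cartan matrix: all $\omega'_j$ with $j\in J$ vanish, while $\omega'_{rd-1}$ identifies with the fundamental weight $\omega_{d-1}$ of $\SL_1(E_r)$, contributing $(d-1)[E_r]=-[E_r]$ in $\Br(R)$ (since $d[E_r]=0$ by Proposition~\ref{prop:exp}). The contribution of the right-hand neighbour of $\alpha_{rd}$ vanishes in each sub-case: for $rd=n$ there is no neighbour; for $rd=n-1$ the neighbour is the spinor vertex, contributing coefficient $-2$ on $\omega'_n=\omega_1^A$, which is killed by $2[A]=0$; and for $rd\le n-2$ the neighbour contributes $-\omega_1^{B_{n-rd}}$, which lies in the root lattice of $B_{n-rd}$ (as $n-rd\ge 2$) and hence vanishes under $\beta_H$. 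Thus $\beta_{G_{an}}(\alpha'_{rd})=[E_r]$, and the constraint $\beta_{G_{an}}(\alpha'_{rd})=0$ forces $[E_r]=0$, contradicting the anisotropy of $\SL_1(E_r)$. Hence $d=1$ and $J=\{1,\ldots,r\}$.

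In the remaining case $d=1$ all $A_{d-1}$ factors degenerate, so $G_{an}=H$. The remaining conditions $\beta_H(\alpha'_i)=0$ for $i\in J$ are tautological for $i<r$ (both neighbours of $\alpha_i$ lie in $J$), while for $i=r$ they reduce to $\beta_H(\omega_1^{B_{n-r}})=0$ (automatic when $n-r\ge 2$, as $\omega_1^{B_m}$ lies in the root lattice for every $m\ge 2$), to $2[A]=0$ when $n-r=1$ (automatic), or to a vacuous condition when $r=n$. Existence of $G$ with any listed anisotropic kernel is then supplied by Theorem~\ref{thm:levi}~\eqref{item:leviembed}, and the uniqueness of $G$ given $H$ follows from parts~\eqref{item:unique} and~\eqref{item:bij} of the same theorem, using that $\Pic(\Dyn(G))=\Pic(D_S)=0$ over the connected semilocal base.

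The main technical obstacle I anticipate is the weight-identification bookkeeping at the junction of the rightmost $A_{d-1}$ factor with the $B_{n-rd}$ tail; in particular, one must identify $\omega_{rd-1}$ with $\omega_{d-1}$ rather than $\omega_1$ of $\SL_1(E_r)$, and invoke the crucial fact that $\omega_1$ of type $B_m$ lies in the root lattice for every $m\ge 2$, so that no contribution from the long simple roots can save the case $d\ge 2$.
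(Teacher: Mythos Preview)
Your argument is correct and essentially identical to the paper's: both invoke Proposition~\ref{prop:listcomb} and then compute $\beta_{G_{an}}(\alpha'_{rd})$ via Theorem~\ref{thm:levi}~\eqref{item:titsrestr} in the three sub-cases $n-rd\ge 2$, $n-rd=1$, $n-rd=0$ to force $[E_r]=0$ and hence $d=1$; your explicit remark that $\omega_1$ of $B_m$ lies in the root lattice for $m\ge 2$ is exactly what makes the paper's line ``$\beta_{\SL_1(E_r)}(\omega_1)-\beta_H(\omega_1)=[E_r]$'' work. One correctable slip: your parenthetical ``$J=\emptyset$ (split $G$)'' is backwards---$J=\emptyset$ is the anisotropic case $r=0$, while $J=D$ (your $r=n$, $d=1$) is the split one, as you yourself correctly observe later.
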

\begin{proof}
Let $(B_n,\,J)$ be the Tits index of $G$ ; we have $J=\{d,\,2d,\,\ldots,\,rd\}$ for some $r\ge 0$, $d>0$ with $rd\le n$ by
Lemma~\ref{lem:opinv} and Proposition~\ref{prop:listcomb}. The anisotropic kernel $G_{an}$ is isomorphic to
$\SL_1(E_1)\times\ldots\SL_1(E_r)\times H$, where $H$ is a group of type $B_{n-rd}$ when $n-rd\ge 2$,
is isomorphic to $\SL_1(A)$ for some Azumaya algebra $A$ over $R$ with $\ind A=\deg A=2$ when $n-rd=1$, or is trivial when $n=rd$.

In the case $n-rd\ge 2$ the Cartan matrix of $B_n$ shows that
$\alpha_{rd}=2\omega_{rd}-\omega_{rd-1}-\omega_{rd+1}$. By Theorem~\ref{thm:levi}, we have
$$
0=\beta_{G_{an}}(\alpha'_{rd})=\beta_{\SL_1(E_r)}(\omega_1)-\beta_{H}(\omega_1)=[E_r].
$$
So $E_r=R$, hence $d=1$.

In the case $n-rd=1$ we have $\alpha_{rd}=2\omega_{n-1}-\omega_{n-2}-2\omega_n$, so
$$
0=\beta_{G_{an}}(\alpha'_{rd})=\beta_{\SL_1(E_r)}(\omega_1)-2\beta_H(\omega_1)=[E_r],
$$
and again $d=1$.

Finally, in the case $n=rd$ we have $\alpha_{rd}=2\omega_n-\omega_{n-1}$, so
$$
0=\beta_{G_{an}}(\alpha'_{rd})=\beta_{\SL_1(E_r)}(\omega_1)=[E_r],
$$
$d=1$, and $G$ is split in this case.
\end{proof}\medskip

The split simple simply connected group scheme of type $C_n$ over $R$ is $\Sp_{2n}(R)$.

\begin{prop}\label{prop:sp}
Assume that $G$ is a simple simply connected group of type $C_n$ over $R$, $\beta_G(\omega_1)=[E]$, $\ind E=d$.
Then $d=2^k$ for some $k\ge 0$ and $d\mid 2n$. If $d=1$ then $G$ is split.
\end{prop}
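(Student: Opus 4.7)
The plan is to deduce all three conclusions from the structure of the standard $2n$-dimensional representation $V(\omega_1)$ of $\Sp_{2n}$ together with the cocenter of simply connected $C_n$.

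First, to establish $d\mid 2n$, I would identify the Tits algebra attached to this standard representation. Its weights are the characters $\pm e_i$, any two of which differ by a root, so the representation is center preserving by Lemma~\ref{lem:cent}~\eqref{item:cent}. Theorem~\ref{thm:algtits} then yields an Azumaya algebra $A_{u,\rho}$ of degree $2n$ whose Brauer class equals $\beta_G(\omega_1)=[E]$; since $E$ has index $d$, one must have $A_{u,\rho}\simeq\MM_{2n/d}(E)$, and in particular $d\mid 2n$.

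Second, for the fact that $d$ is a power of $2$: the cocenter $\La/\Lar$ of simply connected $C_n$ is $\ZZ/2$, and $\omega_1=e_1$ represents its nontrivial element, with $2\omega_1=2e_1$ being a long root and hence lying in $\Lar$. Consequently $2[E]=\beta_G(2\omega_1)=0$ in $\Br(R)$, so $\exp E\mid 2$. Proposition~\ref{prop:exp}, which says $\exp E$ and $\ind E$ share the same prime divisors, then forces $d=\ind E$ to be a power of $2$.

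Finally, suppose $d=1$, so $[E]=0=\beta_G(\omega_1)$. Every fundamental weight $\omega_i=e_1+\cdots+e_i$ is congruent to $i\omega_1$ modulo $\Lar$, since $e_j-e_1\in\Lar$ for every $j$; hence $\beta_G(\omega_i)=0$ for all $i$. The $*$-action is trivial in type $C_n$, so each orbit $O$ of $\Dyn(G)$ consists of a single vertex, and the vanishing hypothesis of Proposition~\ref{prop:strin} is satisfied. Since $R$ is semilocal, $\Pic(\Dyn(G))=0$, so Proposition~\ref{prop:strin} guarantees that the class $[\xi]\in\HH^1(S,G^{ad}_{qs})$ defining $G$ lifts to $\HH^1(S,\Sp_{2n})$. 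The only external input needed is the standard fact that every symplectic space of a given rank over a connected semilocal ring is hyperbolic, so this $\HH^1$ is trivial and $G\simeq\Sp_{2n}$ is split. This last invocation is the only point outside the machinery developed in the paper and is the step I expect to be the main obstacle to verify carefully.
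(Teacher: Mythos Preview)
Your argument is correct and follows essentially the same route as the paper's proof: the paper likewise uses $2\omega_1\in\Lar$ together with Proposition~\ref{prop:exp} for the power-of-$2$ statement, the degree-$2n$ vector representation for $d\mid 2n$, and Proposition~\ref{prop:strin} plus the vanishing of $\HH^1(R,\Sp_{2n})$ (citing \cite[Ch.~I, Cor.~4.1.2]{Knus}) for the split conclusion. The only cosmetic point is that your phrasing ``one must have $A_{u,\rho}\simeq\MM_{2n/d}(E)$, and in particular $d\mid 2n$'' reads as if the isomorphism precedes the divisibility; it is cleaner to say $d=\ind E=\ind A_{u,\rho}\mid\deg A_{u,\rho}=2n$, as the paper does.
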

\begin{proof}
We have $2[E]=0$, since $2\omega_1$ belongs to $\Lar$. Now Proposition~\ref{prop:exp} implies that $d=2^k$.

The vector representation $\rho\colon\Sp_{2n}(R)\to\GL(R^{2n})$ is center preserving and has a weight
$\omega_1$; so $[A_\rho]=[E]$. But $A_\rho$ has degree $2n$, so $d\mid 2n$.

If $d=1$ then by Proposition~\ref{prop:strin} $G$ corresponds to an element of $\HH^1(R,\,\Sp_{2n})$, and the latter is trivial by
\cite[Ch.~I, Cor.~4.1.2]{Knus}.
\end{proof}\medskip

\setcounter{thm}{2}\begin{thm}[$\mathbf{C_n}$]\label{thm:Cn}
Every simple simply connected group $G$ of type $C_n$ over $R$, $n\ge 2$, has the Tits index
$(C_n,\,J)$, where
$J=\{d,\,2d,\,\ldots,\,rd\}$ for some $r\ge 0$, $d>0$ such that $d=2^k\mid 2n$, $rd\le n$, and $r=n$ when
$d=1$:

\begin{equation}\tag{$C_{n,\,r}^{(d)}$}
\xymatrix@C=20pt@R=5pt{
\bullet\ar@{.}[r]&\bullet\ar@{-}[r]&*+[o][F]{\bullet}\ar@{-}[r]&\bullet\ar@{.}[r]&\bullet\ar@{-}[r]&*+[o][F]{\bullet}\ar@{-}[r]&\bullet
\ar@{.}[r]&\bullet\ar@{<=}[r]&\bullet\\
&&d&&&rd
}
\end{equation}

The possible anisotropic kernels are the following:
\begin{itemize}
\item $H\times\SL_1(E)^r$, where $E$ is an Azumaya algebra over $R$ with $\ind E=\deg E=d$, $H$ is a simple simply connected anisotropic of type
$C_{n-rd}$ over $R$ with $\beta_H(\omega_1)=[E]$, when $n-rd\ge 2$;
\item $\SL_1(E)^{r+1}$, where $E$ is an Azumaya algebras $E$ over $R$ with $\ind E=\deg E=d$, when $n-rd=1$;
\item $\SL_1(E)^r$, where $E$ is an Azumaya algebras $E$ over $R$ with $\ind E=\deg E=d$ and $\exp E\le 2$, when $n-rd=0$.
\end{itemize}
\end{thm}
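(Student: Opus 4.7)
The proof follows the template established for the preceding types $A_n$ and $B_n$: read off the shape of $J$ from the combinatorial restrictions, decompose the anisotropic kernel factor-by-factor, and then extract Tits-algebra relations by restricting selected simple roots via Theorem~\ref{thm:levi}\eqref{item:titsrestr}.

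Lemma~\ref{lem:opinv} together with Proposition~\ref{prop:listcomb}(4) forces $J=\{d,2d,\ldots,rd\}$ for some $r\ge 0$, $d\ge 1$ with $rd\le n$. Inspecting the subdiagram $D\setminus J$, the anisotropic kernel splits into $r$ copies of $A_{d-1}$ (in the gaps between, and to the left of, the circled vertices) together with a tail of type $C_{n-rd}$ (respectively $A_1$ when $n-rd=1$, or empty when $n=rd$). Theorem~\ref{thm:An} identifies each $A_{d-1}$ component as $\SL_1(E_i)$ for an Azumaya $R$-algebra $E_i$ with $\ind E_i=\deg E_i=d$ (the anisotropy of the factor forces $E_i$ to be division-like), so $G_{an}\simeq\SL_1(E_1)\times\cdots\times\SL_1(E_r)\times H$ with $H$ as in the statement.

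For each internal index $1\le i<r$, apply Theorem~\ref{thm:levi}\eqref{item:titsrestr} with $\lambda=\alpha_{id}=2\omega_{id}-\omega_{id-1}-\omega_{id+1}$: the term $\omega_{id}$ restricts trivially to $\Cent(G_{an})$ since $id\in J$, while $\omega_{id-1}$ restricts to $\omega_{d-1}$ of $\SL_1(E_i)$ and $\omega_{id+1}$ to $\omega_1$ of $\SL_1(E_{i+1})$. Using $\beta_{\SL_1(E)}(\omega_k)=k[E]$ together with $d[E_i]=0$, this yields $[E_i]=[E_{i+1}]$; Lemma~\ref{lem:Div} upgrades this to $E_i\simeq E_{i+1}$, and we set $E:=E_1$. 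Applying the same restriction principle to the weight $\omega_1\in\La^*$ itself gives $\beta_G(\omega_1)=[E]$, whereupon Proposition~\ref{prop:sp} delivers $d=2^k$, $d\mid 2n$, and splitness of $G$ when $d=1$ (so $r=n$ in that case).

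It remains to extract the boundary constraint at the last circled vertex. When $n-rd\ge 2$ the internal formula $\alpha_{rd}=2\omega_{rd}-\omega_{rd-1}-\omega_{rd+1}$ still applies, and a direct computation as above yields $\beta_H(\omega_1)=[E]$. When $n-rd=1$ the Cartan relation reads $\alpha_{n-1}=2\omega_{n-1}-\omega_{n-2}-\omega_n$; checking the restrictions shows that $\omega_{n-2}$ contributes $-[E]$ from $\SL_1(E_r)$ while $\omega_n$ contributes $[A]=\beta_H(\omega_1)$ from $H\simeq\SL_1(A)$, forcing $[E]=[A]$. Lemma~\ref{lem:Div} then identifies $E\simeq A$, hence $d=2$ and $G_{an}\simeq\SL_1(E)^{r+1}$. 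Finally, when $n=rd$ the short-root relation $\alpha_n=2\omega_n-2\omega_{n-1}$ takes over: $\omega_n$ restricts trivially to $\Cent(G_{an})$ while $\omega_{n-1}$ still contributes $-[E]$ from $\SL_1(E_r)$, so the doubled coefficient survives restriction and one obtains $2[E]=0$, i.e., $\exp E\le 2$. The main delicate point is precisely this final case: one must carefully track the short-root coefficient $2$ at the end of the $C_n$ diagram and verify that it propagates to an exponent bound on $E$ rather than to the outright vanishing of $[E]$.
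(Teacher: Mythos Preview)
Your proof is correct and follows essentially the same route as the paper's: the combinatorial shape of $J$ from Proposition~\ref{prop:listcomb}, the factorisation of $G_{an}$, the chain of equalities $[E_i]=[E_{i+1}]$ via $\alpha_{id}$ and Lemma~\ref{lem:Div}, the identification $\beta_G(\omega_1)=[E]$ feeding into Proposition~\ref{prop:sp}, and then the three boundary cases at $\alpha_{rd}$ handled exactly as in the paper (including the key observation that the coefficient $2$ in $\alpha_n=2\omega_n-2\omega_{n-1}$ yields $2[E]=0$ rather than $[E]=0$). The only cosmetic difference is that you spell out the restriction computations and invoke Lemma~\ref{lem:Div} once more in the $n-rd=1$ case to pass from $[E]=[A]$ to $E\simeq A$, which the paper leaves implicit.
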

\begin{proof}
Let $(C_n,\,J)$ be the Tits index of $G$; we have $J=\{d,\,2d,\,\ldots,\,rd\}$ for some $r\ge 0$, $d>0$ with $rd\le n$ by
Lemma~\ref{lem:opinv} and Proposition~\ref{prop:listcomb}. The anisotropic kernel $G_{an}$ is isomorphic to
$\SL_1(E_1)\times\ldots\SL_1(E_r)\times H$, where $H$ is a group of type $C_{n-rd}$ when $n-rd\ge 2$,
is isomorphic to $\SL_1(A)$ for some Azumaya algebra $A$ over $R$ with $\ind A=\deg A=2$ when $n-rd=1$, or is trivial when $n=rd$.

The Cartan matrix of $C_n$ shows that
$\alpha_{i\cdot d}=2\omega_{i\cdot d}-\omega_{i\cdot d-1}-\omega_{i\cdot d+1}$ for $i=1,\,\ldots,\,r-1$. By Theorem~\ref{thm:levi}, we have
$$
0=\beta_{G_{an}}(\alpha'_{i\cdot d})=\beta_{\SL_1(E_i)}(\omega_1)-\beta_{\SL_1(E_{i+1})}(\omega_1)=[E_i]-[E_{i+1}].
$$
Lemma~\ref{lem:Div} implies now that all $E_i$ are isomorphic; set $E=E_1$. Note that $[E]=\beta_G(\omega_1)$, hence by Proposition~\ref{prop:sp}
$d=2^k\mid 2n$, and $G$ is split when $d=1$.

In the case $n-rd\ge 2$ the Cartan matrix of $C_n$ shows that $\alpha_{rd}=2\omega_{rd}-\omega_{rd-1}-\omega_{rd+1}$, so
$$
0=\beta_{G_{an}}(\alpha'_{rd})=\beta_{\SL_1(E)}(\omega_1)-\beta_H(\omega_1)=[E]-\beta_H(\omega_1).
$$

In the case $n-rd=1$ we have $\alpha_{rd}=2\omega_{n-1}-\omega_{n-2}-\omega_n$, so
$$
0=\beta_{G_{an}}(\alpha'_{rd})=\beta_{\SL_1(E)}(\omega_1)-\beta_{\SL_1(A)}(\omega_1)=[E]-[A].
$$
Hence $[E]=[A]$ and $d=2$.

Finally, in the case $n=rd$ we have $\alpha_{rd}=2\omega_n-2\omega_{n-1}$, so
$$
0=\beta_{G_{an}}(\alpha'_{rd})=2\beta_{\SL_1(E)}(\omega_1)=2[E],
$$
that is $\exp E\le 2$.
\end{proof}\medskip

The split simple simply connected group scheme of type $D_n$ over $R$ is $\Spin_{2n}(R)$.

\begin{prop}\label{prop:so} Assume that $G$ is a simple simply connected group of type ${}^1D_n$ or ${}^2D_n$ over $R$, $n\ge 4$, $\beta_G(\omega_1)=[E]$,
$\ind E=d$. Then $d=2^k$ for some $k\ge 0$ and $d\mid 2n$.
\end{prop}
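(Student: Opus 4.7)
The plan is to mirror exactly the argument of Proposition~\ref{prop:sp} for the symplectic case, using the weight $\omega_1$ of the vector representation.

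First, I would verify that $2\omega_1 \in \Lar$ for the root system $D_n$. Inspecting the Cartan matrix of $D_n$, one has $2\omega_1 = 2\alpha_1 + 2\alpha_2 + \cdots + 2\alpha_{n-2} + \alpha_{n-1} + \alpha_n$, so $2\omega_1$ lies in the root lattice. Consequently $2[E] = 2\beta_G(\omega_1) = \beta_G(2\omega_1) = 0$ in $\Br(R)$, so $\exp E$ divides $2$. By Proposition~\ref{prop:exp}, $\exp E$ and $\ind E$ share the same prime factors, hence $d = 2^k$ for some $k\ge 0$.

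Second, to get divisibility $d\mid 2n$, I would exhibit a center preserving representation of $G_{qs}$ of degree $2n$ whose Tits algebra has Brauer class $[E]$. The natural candidate is the vector representation of $\Spin_{2n}$ on $R^{2n}$, of weights $\pm\epsilon_i$ ($1\le i\le n$). Any two such weights differ by an element of $\Lar$, so by Lemma~\ref{lem:cent}\eqref{item:cent} the representation is center preserving, with $\lambda_\rho=\omega_1$. Since the $*$-action on the Dynkin diagram of $D_n$ fixes the first vertex (for both ${}^1D_n$ and ${}^2D_n$), the weight $\omega_1$ is $*$-invariant, and Lemma~\ref{lem:weightrepr} (or, more directly, the fact that this representation descends from $\Spin_{2n}$ and therefore already exists over $\ZZ$) yields the required representation of $G_{qs}$. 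Applying Theorem~\ref{thm:algtits} produces a Tits algebra $A_{u,\rho}$ of degree $2n$ with $[A_{u,\rho}]=\beta_G(\omega_1)=[E]$. Since $\ind E$ always divides the degree of any Azumaya algebra in the class $[E]$, we conclude $d\mid 2n$.

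There is no real obstacle here: the only potentially subtle point is the existence of the vector representation for the quasi-split outer form ${}^2D_n$, but this is immediate from the $*$-invariance of $\omega_1$ (and even more directly from the fact that $\Spin_{2n}$ itself carries the vector representation as a $\ZZ$-form, so no cocycle twisting is needed to descend it to the quasi-split form). The argument is therefore parallel to the $C_n$ case, with the only input specific to $D_n$ being the expression of $2\omega_1$ as a $\ZZ$-combination of simple roots.
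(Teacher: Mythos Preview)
Your proof is correct and follows exactly the same approach as the paper's own proof: first use $2\omega_1\in\Lar$ together with Proposition~\ref{prop:exp} to get $d=2^k$, then use the vector representation $\Spin_{2n}\to\GL(R^{2n})$ to produce a Tits algebra of degree $2n$ in the class $[E]$, forcing $d\mid 2n$. Your additional remarks (the explicit expression of $2\omega_1$ in the root basis and the $*$-invariance of $\omega_1$ for ${}^2D_n$) are just making explicit what the paper leaves implicit.
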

\begin{proof}
We have $2[E]=0$, since $2\omega_1$ belongs to $\Lar$. Now Proposition~\ref{prop:exp} implies that $d=2^k$.

The vector representation $\rho\colon\Spin_{2n}(R)\to\GL(R^{2n})$ is center preserving and has a weight $\omega_1$; so
$[A_\rho]=[E]$. But $A_\rho$ has degree $2n$, so $d\mid 2n$.
\end{proof}\medskip

\setcounter{thm}{2}\begin{thm}[$\mathbf{{}^1D_n}$]\label{thm:Dn}
Every simple simply connected group $G$ of inner type $D_n$ over $R$, $n\ge 4$, has the Tits index $({}^1D_n,\,J)$, where
$J=\{d,\,2d,\,\ldots,\,rd\}$ (possibly, after interchanging $n-1$ and $n$) for some $r\ge 0$, $d>0$
such that $d=2^k\mid 2n$, $rd\le n$, $n\ne rd+1$:

\begin{equation}\tag{${}^1D_{n,\,r}^{(d)}$}
\xymatrix@R=5pt@C=20pt{
&&&&&&&&\bullet\\
\bullet\ar@{.}[r]&\bullet\ar@{-}[r]&*+[o][F]{\bullet}\ar@{-}[r]&\bullet\ar@{.}[r]&\bullet\ar@{-}[r]&*+[o][F]{\bullet}\ar@{-}[r]&\bullet
\ar@{.}[r]&\bullet\ar@{-}[ru]\ar@{-}[rd]\\
&&d&&&rd&&&\bullet
}
\end{equation}

The possible anisotropic kernels are the following:
\begin{itemize}
\item $H\times\SL_1(E)^r$, where $E$ is an Azumaya algebra over $R$ with $\ind E=\deg E=d$, $H$ is a simple simply connected anisotropic group
of inner type $D_{n-rd}$ over $R$ with $\beta_H(\omega_1)=[E]$, when $n-rd\ge 4$;
\item $\SL_1(A)\times\SL_1(E)^r$, where $E$ is an Azumaya algebra over $R$ with $\ind E=\deg E=d$, $A$ is an Azumaya algebra over $R$ with
$\ind A=\deg A=4$ such that $2[A]=[E]$, when $n-rd=3$;
\item $\SL_1(A_1)\times\SL_1(A_2)\times\SL_1(E)^r$, where $E$ is an Azumaya algebra over $R$ with $\ind E=\deg E=d$, $A_1$ and $A_2$ are
Azumaya algebras over $R$ such that $\ind A_1=\deg A_1=\ind A_2=\deg A_2=2$ and $[A_1]+[A_2]=[E]$, when $n-rd=2$;
\item $\SL_1(E)^r$, where $E$ is an Azumaya algebra over $R$ with $\ind E=\deg E=d$ and $\exp E\le 2$, when $n=rd$.
\end{itemize}
\end{thm}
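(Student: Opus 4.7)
The plan is to follow the template of Theorems~\ref{thm:An}--\ref{thm:Cn}. By Lemma~\ref{lem:opinv} and Proposition~\ref{prop:listcomb}(5), the Tits index must have the stated form $J=\{d,2d,\ldots,rd\}$ with $d=1$ or $d$ even, $rd\le n$ and $rd\ne n-1$, after applying the outer automorphism swapping $\alpha_{n-1}$ and $\alpha_n$ if necessary. The anisotropic kernel decomposes accordingly as $G_{an}\simeq\SL_1(E_1)\times\cdots\times\SL_1(E_r)\times H$, where the factors $\SL_1(E_i)$ arise from the $r$ consecutive $A_{d-1}$-pieces separating the circled vertices, and $H$ is the simply connected anisotropic group whose Dynkin diagram is the remaining right-hand $D_{n-rd}$-piece. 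In the degenerate ranges $n-rd\in\{3,2,0\}$, this $H$ is interpreted via the low-rank identifications $D_3\simeq A_3$, $D_2\simeq A_1\sqcup A_1$, and the empty diagram, respectively.

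Next I treat the interior circled vertices exactly as in Theorem~\ref{thm:An}. For $i=1,\ldots,r-1$ the Cartan relation $\alpha_{id}=2\omega_{id}-\omega_{id-1}-\omega_{id+1}$ combined with Theorem~\ref{thm:levi}\eqref{item:titsrestr} yields $[E_i]=[E_{i+1}]$ in $\Br(R)$, so Lemma~\ref{lem:Div} lets me set $E:=E_1\simeq\cdots\simeq E_r$. Applying the same restriction principle to the weight $\omega_1$ of $G$ identifies $\beta_G(\omega_1)=[E]$, hence Proposition~\ref{prop:so} forces $d=\ind E=2^k\mid 2n$.

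The four cases in the theorem then correspond to the type of the final factor $H$ and the shape of the Cartan relation at the last circled vertex $\alpha_{rd}$. When $n-rd\ge 4$, the interior-type relation $\alpha_{rd}=2\omega_{rd}-\omega_{rd-1}-\omega_{rd+1}$ applies verbatim, and $\omega_{rd+1}$ restricts inside $H$ to the vector weight $\omega_1^{D_{n-rd}}$, yielding $[E]=\beta_H(\omega_1)$. When $n-rd=3$, the factor $H=\SL_1(A)$ is a form of $\SL_4\simeq\Spin_6$ of degree $4$; under $D_3\simeq A_3$ the vector weight $\omega_1^{D_3}$ becomes $\omega_2^{A_3}$, and the congruence $\omega_2\equiv 2\omega_1\pmod{\Lar}$ in $SL_4$ gives $\beta_H(\omega_1^{D_3})=2[A]$, so the relation at $\alpha_{rd}$ becomes $[E]=2[A]$. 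When $n-rd=2$, the relation at $\alpha_{rd}$ involves both spin weights $\omega_{n-1}$ and $\omega_n$ of $D_n$, which under $D_2\simeq A_1\sqcup A_1$ correspond to the $\omega_1$-weights of the two $A_1$-factors of $H=\SL_1(A_1)\times\SL_1(A_2)$; this translates to $[E]=[A_1]+[A_2]$. When $n=rd$, the factor $H$ is absent, and the Cartan relation at the fork leaf $\alpha_{rd}$ (or at $\alpha_{n-2}$) reduces modulo the root lattice of the surviving $A_{d-1}$-factors to $2[E]=0$, i.e.\ $\exp E\le 2$.

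The converse direction---that each anisotropic kernel of the listed form satisfying the stated constraints on the Tits algebras really is realized by a group with the given Tits index, and that $G$ is then determined up to isomorphism---follows from Theorem~\ref{thm:levi}\eqref{item:leviembed}--\eqref{item:unique}, since $\Pic(\Dyn(G))=0$. The main obstacle will be the careful bookkeeping in the degenerate cases $n-rd\in\{2,3\}$ and $n=rd$: one has to combine the Cartan matrix of $D_n$ at the fork with the low-rank exceptional isomorphisms to express the restrictions of $\omega_{n-2},\omega_{n-1},\omega_n$ to $\Cent(H_{qs})$ in terms of the fundamental weights of the $A_3$ or $A_1\sqcup A_1$ pieces, and then match the resulting Brauer class to $2[A]$ or $[A_1]+[A_2]$.
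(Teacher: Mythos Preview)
Your proposal is correct and follows essentially the same route as the paper's proof: the combinatorial restriction via Lemma~\ref{lem:opinv} and Proposition~\ref{prop:listcomb}, the decomposition of $G_{an}$, the Cartan-matrix computations at the interior circled vertices to identify all $E_i$ via Lemma~\ref{lem:Div}, the appeal to Proposition~\ref{prop:so} for $d=2^k\mid 2n$, and the case-by-case analysis at the last circled vertex using the low-rank identifications $D_3\simeq A_3$ and $D_2\simeq A_1\sqcup A_1$ all match the paper. Your explicit mention of the converse via Theorem~\ref{thm:levi}\eqref{item:leviembed}--\eqref{item:unique} is not spelled out in the paper's proof of this particular theorem, but is exactly the general mechanism the paper invokes at the start of \S\ref{sec:ind}.
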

\begin{proof}
Let $({}^1D_n,\,J)$ be the Tits index of $G$; we have $J=\{d,\,2d,\,\ldots,\,rd\}$ for some $r\ge 0$, $d>0$ with $rd\le n$, $rd\ne n-1$ by
Lemma~\ref{lem:opinv} and Proposition~\ref{prop:listcomb}. The anisotropic kernel $G_{an}$ is isomorphic to
$\SL_1(E_1)\times\ldots\SL_1(E_r)\times H$, where $H$ is a group of inner type $D_{n-rd}$ when $n-rd\ge 4$,
is isomorphic to $\SL_1(A)$ for some Azumaya algebra $A$ over $R$ with $\ind A=\deg A=4$ when $n-rd=3$
is isomorphic to $\SL_1(A_1)\times\SL_1(A_2)$ for some Azumaya algebras $A_1$, $A_2$ over $R$ with $\ind A_1=\deg A_1=\ind A_2=\deg A_2=2$,
or is trivial when $n=rd$.

The Cartan matrix of $D_n$ shows that
$\alpha_{i\cdot d}=2\omega_{i\cdot d}-\omega_{i\cdot d-1}-\omega_{i\cdot d+1}$ for $i=1,\,\ldots,\,r-1$. By Theorem~\ref{thm:levi}, we have
$$
0=\beta_{G_{an}}(\alpha'_{i\cdot d})=\beta_{\SL_1(E_i)}(\omega_1)-\beta_{\SL_1(E_{i+1})}(\omega_1)=[E_i]-[E_{i+1}].
$$
Lemma~\ref{lem:Div} implies now that all $E_i$ are isomorphic; set $E=E_1$. Note that $[E]=\beta_G(\omega_1)$, hence by Proposition~\ref{prop:so}
$d=2^k\mid 2n$.

In the case $n-rd\ge 4$ the Cartan matrix of $D_n$ shows that $\alpha_{rd}=2\omega_{rd}-\omega_{rd-1}-\omega_{rd+1}$, so
$$
0=\beta_{G_{an}}(\alpha'_{rd})=\beta_{\SL_1(E)}(\omega_1)-\beta_H(\omega_1)=[E]-\beta_H(\omega_1).
$$

In the case $n-rd=3$ we still have $\alpha_{rd}=2\omega_{rd}-\omega_{rd-1}-\omega_{rd+1}$, so
$$
0=\beta_{G_{an}}(\alpha'_{rd})=\beta_{\SL_1(E)}(\omega_1)-\beta_{\SL_1(A)}(\omega_2)=[E]-2[A].
$$

In the case $n-rd=2$ we have $\alpha_{rd}=2\omega_{n-2}-\omega_{n-3}-\omega_{n-1}-\omega_n$, so
$$
0=\beta_{G_{an}}(\alpha'_{rd})=\beta_{\SL_1(E)}(\omega_1)-\beta_{\SL_1(A_1)}(\omega_1)-\beta_{\SL_1(A_1)}(\omega_2)=[E]-[A_1]-[A_2].
$$

Finally, in the case $n=rd$ we have $\alpha_{rd}=2\omega_n-\omega_{n-2}$, so
$$
0=\beta_{G_{an}}(\alpha'_{rd})=\beta_{\SL_1(E)}(\omega_2)=2[E],
$$
hence $\exp E\le 2$.
\end{proof}\medskip

\setcounter{thm}{2}\begin{thm}[$\mathbf{{}^2D_n}$]\label{thm:Dn2}
Every simple simply connected group $G$ of type ${}^2D_n$, $n\ge 4$, has the Tits index $({}^2D_n,\,J)$, where
$J=\{d,\,2d,\,\ldots,\,rd\}$ for some $r\ge 0$, $d>0$ such that $d=2^k\mid 2n$, $rd< n-1$,
or $J=\{d,\,2d,\,\ldots,\,(r-1)d,\,n-1,\,n\}$ for some $r\ge 0$, $d\in \{1,\,2\}$
such that $rd=n-1$.

\begin{equation}\tag{${}^2D_{n,\,r}^{(d)}$}
\xymatrix@R=5pt@C=20pt{
&&&&&&&&\bullet\ar@{<->}@/^/[dd]\\
\bullet\ar@{.}[r]&\bullet\ar@{-}[r]&*+[o][F]{\bullet}\ar@{-}[r]&\bullet\ar@{.}[r]&\bullet\ar@{-}[r]&*+[o][F]{\bullet}\ar@{-}[r]&\bullet
\ar@{.}[r]&\bullet\ar@{-}[ru]\ar@{-}[rd]\\
&&d&&&rd&&&\bullet
}
\end{equation}

Denote by $\Spec R'$ the orbit corresponding to $\{n-1,\,n\}$ (so that $R'/R$ is a connected quadratic \'etale extension). The possible anisotropic
kernels are the following:
\begin{itemize}
\item $H\times\SL_1(E)^r$, where $E$ is an Azumaya algebra over $R$ with $\ind E=\deg E=d$, $H$ is a simple simply connected anisotropic group
of type ${}^2D_{n-rd}$ over $R$ whose orbit corresponding to $\{n-rd-1,\,n-rd\}$ is isomorphic to $\Spec R'$, such that $\beta_H(\omega_1)=[E]$,
when $n-rd\ge 4$;
\item $H\times\SL_1(E)^r$, where $E$ is an Azumaya algebra over $R$ with $\ind E=\deg E=d$, $H$ is a simple simply connected anisotropic group
of type ${}^2A_3$ over $R$ whose orbit corresponding to $\{1,\,3\}$ is isomorphic to $\Spec R'$, such that $\beta_H(\omega_2)=[E]$, when $n-rd=3$;
\item $R_{R'/R}(\SL_1(A))\times\SL_1(E)^r$, where $E$ is an Azumaya algebra over $R$ with $\ind E=\deg E=d$, $A$ is an Azumaya algebras over $R'$ such that
$\ind A=\deg A=2$ and $\cores_{R'/R}([A])=[E]$, when $n-rd=2$;
\item $\SL_1(E)^r$, where $E$ is an Azumaya algebra over $R$ such that $\ind E=\deg E=d$ and $[E_{R'}]=0$, when $n-rd=1$.
\end{itemize}
\end{thm}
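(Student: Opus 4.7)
The plan follows the template established in the proofs of Theorems~\ref{thm:An}--\ref{thm:Dn}. First I apply Lemma~\ref{lem:opinv} together with Proposition~\ref{prop:listcomb}(6) (the $\Phi=D_n$, $|\Gamma|=2$ case) to fix the shape of $J$ and the admissible range of $d,r$. Reading off the Dynkin subdiagram obtained by deleting the circled orbits, I identify the anisotropic kernel as
$$G_{an} \simeq \SL_1(E_1) \times \cdots \times \SL_1(E_r) \times H,$$
where each $\SL_1(E_i)$ is the $A_{d-1}$-type factor between two consecutive circled vertices, and $H$ is the ``end piece'' dictated by $n - rd$: of type ${}^2D_{n-rd}$ when $n-rd \ge 4$; of type ${}^2A_3$ when $n-rd = 3$ (via the identification $D_3\simeq A_3$, with the $*$-action swapping $n-1,n$ producing the outer form); a Weil restriction $R_{R'/R}(\SL_1(A))$ when $n-rd = 2$ (as the two tail vertices form an $A_1\sqcup A_1$ orbit over $R'$); and trivial when $n-rd = 1$.

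Next, for each interior orbit $\{id\}$ with $1 \le i \le r-1$, the Cartan relation $\alpha_{id} = 2\omega_{id} - \omega_{id-1} - \omega_{id+1}$ together with Theorem~\ref{thm:levi}(1) yields $[E_i] = [E_{i+1}]$. Lemma~\ref{lem:Div} then forces $E_1\simeq\cdots\simeq E_r$, defining a single algebra $E$, and Proposition~\ref{prop:so} applied to $[E] = \beta_G(\omega_1)$ gives $d = 2^k \mid 2n$.

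The remaining task is to extract the constraint from the ``last'' orbit in each of the four subcases. For $n-rd\ge 4$, Theorem~\ref{thm:levi}(1) applied to $\alpha_{rd} = 2\omega_{rd} - \omega_{rd-1} - \omega_{rd+1}$ gives $\beta_H(\omega_1) = [E]$. For $n-rd = 3$ the same root is used, but $\omega_{rd+1}$ now corresponds to the middle fundamental weight $\omega_2$ of $A_3$, yielding $\beta_H(\omega_2) = [E]$. For $n-rd = 2$ we have $\alpha_{rd} = 2\omega_{rd} - \omega_{rd-1} - \omega_{n-1} - \omega_n$; the last two terms combine into the canonical weight $\bar\omega_O$ for the orbit $O = \{n-1, n\}\simeq \Spec R'$, and the corestriction interpretation recalled before Theorem~\ref{thm:An2} turns the relation into $\cores_{R'/R}([A]) = [E]$. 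For $n-rd = 1$, the orbit $O = \{n-1, n\}$ is itself circled; applying Theorem~\ref{thm:levi}(1) over $O\simeq \Spec R'$ with $\alpha_{n-1} = 2\omega_{n-1} - \omega_{n-2}$ (working over $R'$, where $O$ splits) and the identification $\omega_{n-2}|_{\Cent(\SL_1(E_{R'}))} = \omega_{d-1}$, the relation reduces to $[E_{R'}] = 0$ in $\Br(R')$.

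The main obstacle I expect lies in the two ``outer'' cases $n-rd\in\{1,2\}$, where the orbit $\{n-1,n\}$ lives over the quadratic extension $R'$ and the canonical weight and root of Proposition~\ref{prop:canweight} must be carefully translated through the Weil restriction formalism into the corestriction homomorphism in the Brauer group, as in the end-case analysis of Theorem~\ref{thm:An2}.
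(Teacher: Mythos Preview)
Your proposal is correct and follows essentially the same approach as the paper's own proof: the same combinatorial input (Lemma~\ref{lem:opinv} and Proposition~\ref{prop:listcomb}(6)), the same identification of $G_{an}$, the same use of the Cartan relations with Theorem~\ref{thm:levi} and Lemma~\ref{lem:Div} to collapse the $E_i$'s, and the same case analysis at the last orbit. The only points worth adding are that in the case $n-rd=1$ the paper explicitly records that $d\mid 2n$ together with $rd=n-1$ forces $d\in\{1,2\}$ (you state this in the theorem but do not re-derive it), and that the paper phrases the final computation via $\alpha_n=2\omega_n-\omega_{n-2}$ rather than $\alpha_{n-1}$, which is immaterial since both are components of the canonical root $\alpha_{O_r}$ after base change to $R'$.
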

\begin{proof}
Let $({}^2D_n,\,J)$ be the Tits index of $G$; by Lemma~\ref{lem:opinv} and Proposition~\ref{prop:listcomb} we have $J=\{d,\,2d,\,\ldots,\,rd\}$
(or $J=\{d,2d,\ldots,(r-2)d,n-1,n\}$ in the case $rd=n-1$) for some $r\ge 0$, $d>0$ with $rd\ne n-1$. The anisotropic kernel $G_{an}$ is isomorphic to
$\SL_1(E_1)\times\ldots\SL_1(E_r)\times H$, where $H$ is a group of type ${}^2D_{n-rd}$ when $n-rd\ge 4$,
of type ${}^2A_3$ when $n-rd=3$, is isomorphic to $R_{R'/R}(\SL_1(A))$ for some Azumaya algebras $A$ over a connected quadratic \'etale extension
$R'/R$ with $\ind A=\deg A=2$, or is trivial when $n-rd=1$.

Denote the orbit corresponding to $\{i\cdot d,\,n+1-i\cdot d\}$ by $O_i$, $i=1,\,\ldots,\,r$. The Cartan matrix of $D_n$ shows that
$\alpha_{i\cdot d}=2\omega_{i\cdot d}-\omega_{i\cdot d-1}-\omega_{i\cdot d+1}$ for $i=1,\,\ldots,\,r-1$. By Theorem~\ref{thm:levi}, we have
$$
0=\beta_{{(G_{an})}_{O_i}}(\alpha'_{O_i})=\beta_{\SL_1(E_i)}(\omega_1)-\beta_{\SL_1(E_{i+1})}(\omega_1)=[E_i]-[E_{i+1}].
$$
Lemma~\ref{lem:Div} implies now that all $E_i$ are isomorphic; set $E=E_1$. Note that $[E]=\beta_G(\omega_1)$, hence by Proposition~\ref{prop:so}
$d=2^k\mid 2n$.

In the case $n-rd\ge 4$ the Cartan matrix of $D_n$ shows that $\alpha_{rd}=2\omega_{rd}-\omega_{rd-1}-\omega_{rd+1}$, so
$$
0=\beta_{{(G_{an})}_{O_r}}(\alpha'_{O_r})=\beta_{\SL_1(E)}(\omega_1)-\beta_H(\omega_1)=[E]-\beta_H(\omega_1).
$$

In the case $n-rd=3$ we still have $\alpha_{rd}=2\omega_{rd}-\omega_{rd-1}-\omega_{rd+1}$, so
$$
0=\beta_{{(G_{an})}_{O_r}}(\alpha'_{O_r})=\beta_{\SL_1(E)}(\omega_1)-\beta_H(\omega_2).
$$

In the case $n-rd=2$ $O_r\simeq\Spec R$, and we have $\alpha_{rd}=2\omega_{n-2}-\omega_{n-3}-\omega_{n-1}-\omega_n$, so
$$
0=\beta_{{(G_{an})}_{O_r}}(\alpha'_{O_r})=\beta_{\SL_1(E)}(\omega_1)-\cores_{R'/R}(\beta_{\SL_1(A)}(\omega_1))=[E]-\cores_{R'/R}([A]).
$$

Finally, in the case $n-rd=1$ the condition $d|2n$ implies $d\in\{1,2\}$; also, $O_r\simeq\Spec R'$, and we have $\alpha_{rd}=2\omega_n-\omega_{n-2}$, so
$$
0=\beta_{{(G_{an})}_{O_r}}(\alpha'_{O_r})=\beta_{\SL_1(E)_{O_r}}(\omega_1)=[E_{R'}].
$$
\end{proof}\medskip

\setcounter{thm}{2}\begin{thm}[$\mathbf{{}^3D_4}$ and $\mathbf{{}^6D_4}$]\label{thm:D43}
Every simple simply connected group $G$ of type ${}^3D_4$ or ${}^6D_4$ over $R$ has one of the following Tits indices:

\begin{equation}\tag{${}^3D_{4,\,0}^{28}$, ${}^6D_{4,\,0}^{28}$}
\begin{array}{cccc}
\xymatrix@C=10pt@R=20pt{
&&&\bullet\ar@{->}@/^1.5pc/[dd]\\
\bullet\ar@{-}[rr]\ar@{->}@/^1.5pc/[rrru]\ar@{<-}@/_1.5pc/[rrrd]&&\bullet\ar@{-}[ru]\ar@{-}[rd]\\
&&&\bullet
}
&&&
\xymatrix@C=10pt@R=20pt{
&&&\bullet\ar@{<->}@/^1.5pc/[dd]\\
\bullet\ar@{-}[rr]\ar@{<->}@/^1.5pc/[rrru]\ar@{<->}@/_1.5pc/[rrrd]&&\bullet\ar@{-}[ru]\ar@{-}[rd]\\
&&&\bullet
}
\end{array}
\end{equation}

\begin{equation}\tag{${}^3D_{4,\,1}^{9}$, ${}^6D_{4,\,1}^{9}$}
\begin{array}{cccc}
\xymatrix@C=10pt@R=20pt{
&&&\bullet\ar@{->}@/^1.5pc/[dd]\\
\bullet\ar@{-}[rr]\ar@{->}@/^1.5pc/[rrru]\ar@{<-}@/_1.5pc/[rrrd]&&*+[o][F]{\bullet}\ar@{-}[ru]\ar@{-}[rd]\\
&&&\bullet
}
&&&
\xymatrix@C=10pt@R=20pt{
&&&\bullet\ar@{<->}@/^1.5pc/[dd]\\
\bullet\ar@{-}[rr]\ar@{<->}@/^1.5pc/[rrru]\ar@{<->}@/_1.5pc/[rrrd]&&*+[o][F]{\bullet}\ar@{-}[ru]\ar@{-}[rd]\\
&&&\bullet
}
\end{array}
\end{equation}

\begin{equation}\tag{${}^3D_{4,\,2}^{2}$, ${}^6D_{4,\,2}^{2}$}
\begin{array}{cccc}
\xymatrix@C=10pt@R=20pt{
&&&*+[o][F]{\bullet}\ar@{->}@/^1.5pc/[dd]\\
*+[o][F]{\bullet}\ar@{-}[rr]\ar@{->}@/^1.5pc/[rrru]\ar@{<-}@/_1.5pc/[rrrd]&&*+[o][F]{\bullet}\ar@{-}[ru]\ar@{-}[rd]\\
&&&*+[o][F]{\bullet}
}
&&&
\xymatrix@C=10pt@R=20pt{
&&&*+[o][F]{\bullet}\ar@{<->}@/^1.5pc/[dd]\\
*+[o][F]{\bullet}\ar@{-}[rr]\ar@{<->}@/^1.5pc/[rrru]\ar@{<->}@/_1.5pc/[rrrd]&&*+[o][F]{\bullet}\ar@{-}[ru]\ar@{-}[rd]\\
&&&*+[o][F]{\bullet}
}
\end{array}
\end{equation}

Denote by $\Spec R'$ the orbit corresponding to $\{1,\,3,\,4\}$ (so that $R'/R$ is a connected cubic \'etale extension).
The possible anisotropic kernels in the case of ${}^3D_{4,\,1}^{9}$ or ${}^6D_{4,\,1}^{9}$ are of the form
$R_{R'/R}(\SL_1(A))$, where $A$ is an Azumaya algebra over $R'$ with
$\ind A=\deg A=2$ and $\cores_{R'/R}([A])=0$.

In the case of ${}^3D_{4,\,0}^{28}$ or ${}^6D_{4,\,0}^{28}$ $G$ is anisotropic; in the case of ${}^3D_{4,\,2}^{2}$ or ${}^6D_{4,\,2}^{2}$ $G$ is quasi-split.
\end{thm}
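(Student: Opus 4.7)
The plan is to follow the same pattern as the preceding theorems: first use the combinatorial classification provided by Lemma~\ref{lem:opinv} and Proposition~\ref{prop:listcomb} to list the admissible indices, then apply Theorem~\ref{thm:levi} together with the Cartan matrix of $D_4$ to extract the constraints on the anisotropic kernel.

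First, Lemma~\ref{lem:opinv} combined with Proposition~\ref{prop:listcomb}(7) shows that for $\Phi=D_4$ and $|\Gamma|\in\{3,6\}$ the only possibilities for $J\neq\emptyset$ are $J=\{2\}$ and $J=D$; adding the anisotropic case $J=\emptyset$ yields precisely the three indices in the statement. The case $J=\emptyset$ is the definition of anisotropy, and in the case $J=D$ one has $\tf_{\min}=\emptyset$, so $G$ is quasi-split and its anisotropic kernel is trivial.

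The only substantive case is $J=\{2\}$. Here the Dynkin diagram of the anisotropic kernel $H$ is obtained from $D_4$ by deleting vertex $2$, so $H$ is of type $A_1+A_1+A_1$ and the three simple factors form a single $*$-orbit (both $\ZZ/3$ and $S_3$ act transitively on $\{1,3,4\}$). By Exp.~XXIV Prop.~5.9, invoked already in the reductions at the beginning of Section~\ref{sec:ind}, this forces $H\simeq R_{R'/R}(H_0)$ for a simply connected $A_1$-type group $H_0$ over the connected cubic \'etale extension $R'/R$ corresponding to this orbit, and $H_0\simeq \SL_1(A)$ for some Azumaya $R'$-algebra $A$ of degree~$2$. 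Anisotropy of $H$ is equivalent to anisotropy of $\SL_1(A)$, which by the $A_1$ case of Theorem~\ref{thm:An} means $\ind A=2$.

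To obtain the corestriction condition on $[A]$, apply Theorem~\ref{thm:levi}(1) with $\alpha=\alpha_2\in\Lar^*$. Since $\omega_2$ pairs trivially with each of the simple coroots $\alpha_j^\vee$, $j\in\{1,3,4\}$, it restricts trivially to $\Cent(H_{qs})$, and the Cartan-matrix identity $\alpha_2=2\omega_2-\omega_1-\omega_3-\omega_4$ then gives $\alpha'_2=-(\omega_1+\omega_3+\omega_4)|_{\Cent(H_{qs})}$. The right-hand side is the canonical weight $\bar\omega_O$ of Proposition~\ref{prop:canweight} attached to the single orbit $O=\Spec R'$ of $H$, and by the interpretation of $\cores_{R'/R}$ in terms of $\beta$ recalled immediately before Theorem~\ref{thm:An2}, one has $\beta_H(\alpha'_2)=-\cores_{R'/R}([A])$. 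Setting this equal to $0$ yields the stated condition $\cores_{R'/R}([A])=0$. Conversely, given any such $A$, Theorem~\ref{thm:levi}(2) produces an oriented inner form $G$ of $G_{qs}$ realizing $H=R_{R'/R}(\SL_1(A))$ as the anisotropic kernel. The only minor obstacle is the routine identification of $(\omega_1+\omega_3+\omega_4)|_{\Cent(H_{qs})}$ with $\bar\omega_O$ so that the corestriction formula applies; once this is done, everything else follows from the general machinery set up in the previous sections.
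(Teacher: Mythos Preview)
Your proof is correct and follows essentially the same approach as the paper: both invoke Lemma~\ref{lem:opinv} and Proposition~\ref{prop:listcomb}(7) to reduce to the three indices, identify the anisotropic kernel in the $J=\{2\}$ case as $R_{R'/R}(\SL_1(A))$, and apply Theorem~\ref{thm:levi} together with the Cartan relation $\alpha_2=2\omega_2-\omega_1-\omega_3-\omega_4$ to obtain $\cores_{R'/R}([A])=0$. You spell out a few details the paper leaves implicit (the vanishing of $\omega_2'$, the explicit invocation of Theorem~\ref{thm:levi}(2) for the converse), but the argument is the same.
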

\begin{proof}
By Lemma~\ref{lem:opinv} and Proposition~\ref{prop:listcomb} $G$ is anisotropic or quasi-split, or has the Tits index
${}^3D_{4,\,1}^{9}$ or ${}^6D_{4,\,1}^{9}$.

Let the Tits index be ${}^3D_{4,\,1}^{9}$ or ${}^6D_{4,\,1}^{9}$. The anisotropic kernel $G_{an}$ is isomorphic to
$R_{R'/R}(\SL_1(A))$ for some Azumaya algebra over $R'$ with $\ind A=\deg A=2$.
The Cartan matrix of $D_4$ shows that $\alpha_2=2\omega_2-\omega_1-\omega_3-\omega_4$, so by Theorem~\ref{thm:levi} we have
$$
0=\beta_{G_{an}}(\alpha'_2)=-\cores_{R'/R}(\beta_{\SL_1(A)}(\omega_1))=-\cores_{R'/R}([A]).
$$
\end{proof}\medskip

\setcounter{thm}{2}\begin{thm}[$\mathbf{{}^1E_6}$]\label{thm:E6}
Every simple simply connected group $G$ of inner type $E_6$ over $R$ has one of the following Tits indices:

\begin{equation}\tag{${}^1E_{6,\,0}^{78}$}
\xymatrix{
{\bullet}\ar@{-}[r]&{\bullet}\ar@{-}[r]&{\bullet}\ar@{-}[d]\ar@{-}[r]&{\bullet}\ar@{-}[r]&{\bullet}\\
&&{\bullet}
}
\end{equation}

\begin{equation}\tag{${}^1E_{6,\,2}^{28}$}
\xymatrix{
*+[o][F]{\bullet}\ar@{-}[r]&{\bullet}\ar@{-}[r]&{\bullet}\ar@{-}[d]\ar@{-}[r]&{\bullet}\ar@{-}[r]&*+[o][F]{\bullet}\\
&&{\bullet}
}
\end{equation}

\begin{equation}\tag{${}^1E_{6,\,2}^{16}$}
\xymatrix{
{\bullet}\ar@{-}[r]&{\bullet}\ar@{-}[r]&*+[o][F]{\bullet}\ar@{-}[d]\ar@{-}[r]&{\bullet}\ar@{-}[r]&{\bullet}\\
&&*+[o][F]{\bullet}
}
\end{equation}

\begin{equation}\tag{${}^1E_{6,\,6}^{0}$}
\xymatrix{
*+[o][F]{\bullet}\ar@{-}[r]&*+[o][F]{\bullet}\ar@{-}[r]&*+[o][F]{\bullet}\ar@{-}[d]\ar@{-}[r]&*+[o][F]{\bullet}\ar@{-}[r]&*+[o][F]{\bullet}\\
&&*+[o][F]{\bullet}
}
\end{equation}

The possible anisotropic kernels are the following:
\begin{itemize}
\item simple simply connected anisotropic groups $H$ of type $D_4$ over $R$ with $\beta_H=0$, in the case of ${}^1E_{6,\,2}^{28}$;
\item $\SL_1(A)^2$, where $A$ is an Azumaya algebra over $R$ with $\ind A=\deg A=3$, in the case of ${}^1E_{6,\,2}^{16}$.
\end{itemize}

In the case of ${}^1E_{6,\,0}^{78}$ $G$ is anisotropic; in the case of ${}^1E_{6,\,6}^{0}$ $G$ is split.
\end{thm}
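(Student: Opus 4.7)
The plan is to follow the pattern of the preceding theorems: first restrict $J$ combinatorially, then for each remaining case either determine the Tits algebras of the anisotropic kernel $H$ or derive a contradiction. Lemma~\ref{lem:opinv} and Proposition~\ref{prop:listcomb} leave $J \in \{\emptyset, \{2\}, \{1,6\}, \{2,4\}, D\}$; the extreme cases $J = \emptyset$ and $J = D$ correspond tautologically to $G$ anisotropic and $G$ split. For each of the three intermediate cases I would identify $H$ from $D \setminus J$, and then use Theorem~\ref{thm:levi}~\eqref{item:titsrestr} together with the Cartan formulas $\alpha_j = \sum_k C_{jk}\omega_k$ and the vanishing $\omega_j|_{\Cent H} = 0$ for $j \in J$ to translate each $\beta_H(\alpha'_j) = 0$ ($j \in J$) into a condition on the Tits algebras of $H$. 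Lemma~\ref{lem:Div} eliminates redundancy between components, while sufficiency of the resulting conditions on $H$ comes from Theorem~\ref{thm:levi}~\eqref{item:leviembed}.

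For $J = \{1, 6\}$, $D \setminus J$ has type $D_4$ (vertex $4$ is the branch and $\alpha_2, \alpha_3, \alpha_5$ are the three arms), so $H$ is a simply connected anisotropic group of type $D_4$. From $\alpha_1 = 2\omega_1 - \omega_3$ and $\alpha_6 = 2\omega_6 - \omega_5$, and using $\omega_1|_{\Cent H} = \omega_6|_{\Cent H} = 0$, the boundary vanishings reduce to $\beta_H(\omega_3|_{\Cent H}) = \beta_H(\omega_5|_{\Cent H}) = 0$. The restrictions $\omega_3|_{\Cent H}$ and $\omega_5|_{\Cent H}$ are fundamental weights at two distinct arms of $D_4$, hence two distinct nontrivial elements of $\Cocent(H) \cong (\ZZ/2)^2$, and so generate it; therefore $\beta_H \equiv 0$.

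For $J = \{2, 4\}$, removing these vertices disconnects $D$ into the two $A_2$ components $\{1, 3\}$ and $\{5, 6\}$, so $H \cong \SL_1(A_1) \times \SL_1(A_2)$ with each $A_i$ division of degree $3$. Only $\alpha_4$ contributes a nontrivial relation: from $\alpha_4 = 2\omega_4 - \omega_2 - \omega_3 - \omega_5$, the $\omega_2$ and $\omega_4$ terms vanish on $\Cent H$, while $\omega_3, \omega_5$ restrict to the relevant fundamental weights of the two $A_2$ components adjacent to vertex $4$, producing $[A_1] = [A_2]$. Lemma~\ref{lem:Div} then gives $A_1 \cong A_2 =: A$.

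The main obstacle is to exclude $J = \{2\}$, which is left open by the combinatorial restrictions. In that case $D \setminus J$ has type $A_5$, so $H$ would be $\SL_1(E)$ with $E$ division of degree $6$, and $\Cocent(H) = \ZZ/6$. From $\alpha_2 = 2\omega_2 - \omega_4$, vanishing of $\omega_2|_{\Cent H}$, and the fact that $\omega_4|_{\Cent H}$ is the middle fundamental weight of $A_5$ (corresponding to $3 \in \ZZ/6$), the relation $\beta_H(\alpha'_2) = 0$ becomes $3[E] = 0$, i.e.\ $\exp E \mid 3$. This contradicts Proposition~\ref{prop:exp}, which forces the prime factors of $\exp E$ to match those of $\ind E = 6$, so in particular $2 \mid \exp E$. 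Hence no group of index $({}^1E_6, \{2\})$ exists.
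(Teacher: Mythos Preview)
Your proposal is correct and follows essentially the same approach as the paper: the same combinatorial reduction via Lemma~\ref{lem:opinv} and Proposition~\ref{prop:listcomb}, the same Cartan-matrix computations of $\beta_H(\alpha'_j)$ for $j\in J$ in each case, the same use of Lemma~\ref{lem:Div} for $J=\{2,4\}$, and the same exclusion of $J=\{2\}$ via $3[E]=0$ together with Proposition~\ref{prop:exp}. The only cosmetic differences are the order of cases and that the paper relabels the restricted weights in the $D_4$ numbering (writing $\beta_{G_{an}}(\omega_1)=\beta_{G_{an}}(\omega_4)=0$) whereas you keep the $E_6$ labels $\omega_3|_{\Cent H},\,\omega_5|_{\Cent H}$; the content is identical.
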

\begin{proof}
By Lemma~\ref{lem:opinv} and Proposition~\ref{prop:listcomb} the Tits index of $G$ is either one of the listed above or
the following:
$$
\xymatrix{
{\bullet}\ar@{-}[r]&{\bullet}\ar@{-}[r]&{\bullet}\ar@{-}[d]\ar@{-}[r]&{\bullet}\ar@{-}[r]&{\bullet}\\
&&*+[o][F]{\bullet}
}
$$

Let us first exclude the latter case. The anisotropic kernel $G_{an}$ is isomorphic to $\SL_1(A)$ for some
Azumaya algebra $A$ over $R$ with $\ind A=\deg A=6$. The Cartan matrix of $E_6$ shows that $\alpha_2=2\omega_2-\omega_4$.
By Theorem~\ref{thm:levi} we have
$$
0=\beta_{G_{an}}(\alpha'_2)=-\beta_{\SL_1(A)}(\omega_3)=-3[A].
$$
Hence $\exp A=3$, but this contradicts  Proposition~\ref{prop:exp}.

In the case of ${}^1E_{6,\,2}^{28}$ the anisotropic kernel $G_{an}$ is of type ${}^1D_4$. We have
$\alpha_1=2\omega_1-\omega_3$, $\alpha_6=2\omega_6-\omega_5$, so
\begin{align*}
&0=\beta_{G_{an}}(\alpha'_1)=-\beta_{G_{an}}(\omega_1);\\
&0=\beta_{G_{an}}(\alpha'_6)=-\beta_{G_{an}}(\omega_4).
\end{align*}
It follows that $\beta_{G_{an}}=0$.

In the case of ${}^1E_{6,\,2}^{16}$ the anisotropic kernel $G_{an}$ is isomorphic to $\SL_1(A_1)\times\SL_1(A_2)$
for some Azumaya algebras $A_1$, $A_2$ over $R$ with $\ind A_1=\deg A_1=\ind A_2=\deg A_2=3$. We have
$\alpha_4=2\omega_4-\omega_2-\omega_3-\omega_5$, so
$$
0=\beta_{G_{an}}(\alpha'_4)=\beta_{\SL_1(A_1)}(\omega_1)-\beta_{\SL_1(A_2)}(\omega_1)=[A_1]-[A_2].
$$
By Lemma~\ref{lem:Div} $A_1\simeq A_2$.
\end{proof}\medskip

\setcounter{thm}{2}\begin{thm}[$\mathbf{{}^2E_6}$]\label{thm:E62}
Every simple simply connected group $G$ of type ${}^2E_6$ over $R$ has one of the following Tits indices:

\medskip

\begin{equation}\tag{${}^2E_{6,\,0}^{78}$}
\xymatrix{
{\bullet}\ar@{-}[r]\ar@{<->}@/^1.5pc/[rrrr]&{\bullet}\ar@{-}[r]&{\bullet}\ar@{-}[d]\ar@{-}[r]&{\bullet}\ar@{-}[r]&{\bullet}\\
&&{\bullet}
}
\end{equation}

\medskip

\begin{equation}\tag{${}^2E_{6,\,1}^{35}$}
\xymatrix{
{\bullet}\ar@{-}[r]\ar@{<->}@/^1.5pc/[rrrr]&{\bullet}\ar@{-}[r]&{\bullet}\ar@{-}[d]\ar@{-}[r]&{\bullet}\ar@{-}[r]&{\bullet}\\
&&*+[o][F]{\bullet}
}
\end{equation}

\medskip

\begin{equation}\tag{${}^2E_{6,\,1}^{29}$}
\xymatrix{
*+[o][F]{\bullet}\ar@{-}[r]\ar@{<->}@/^1.5pc/[rrrr]&{\bullet}\ar@{-}[r]&{\bullet}\ar@{-}[d]\ar@{-}[r]&{\bullet}\ar@{-}[r]&*+[o][F]{\bullet}\\
&&{\bullet}
}
\end{equation}

\medskip

\begin{equation}\tag{${}^2E_{6,\,2}^{16'}$}
\xymatrix{
*+[o][F]{\bullet}\ar@{-}[r]\ar@{<->}@/^1.5pc/[rrrr]&{\bullet}\ar@{-}[r]&{\bullet}\ar@{-}[d]\ar@{-}[r]&{\bullet}\ar@{-}[r]&*+[o][F]{\bullet}\\
&&*+[o][F]{\bullet}
}
\end{equation}

\medskip

\begin{equation}\tag{${}^2E_{6,\,2}^{16''}$}
\xymatrix{
{\bullet}\ar@{-}[r]\ar@{<->}@/^1.5pc/[rrrr]&{\bullet}\ar@{-}[r]&*+[o][F]{\bullet}\ar@{-}[d]\ar@{-}[r]&{\bullet}\ar@{-}[r]&{\bullet}\\
&&*+[o][F]{\bullet}
}
\end{equation}

\medskip

\begin{equation}\tag{${}^2E_{6,\,4}^{2}$}
\xymatrix{
*+[o][F]{\bullet}\ar@{-}[r]\ar@{<->}@/^1.5pc/[rrrr]&*+[o][F]{\bullet}\ar@{-}[r]&*+[o][F]{\bullet}\ar@{-}[d]\ar@{-}[r]
&*+[o][F]{\bullet}\ar@{-}[r]&*+[o][F]{\bullet}\\
&&*+[o][F]{\bullet}
}
\end{equation}

Denote by $\Spec R'$ the orbit corresponding to $\{1,\,6\}$ (so that $R'/R$ is a connected quadratic \'etale extension). The possible anisotropic
kernels are the following:
\begin{itemize}
\item simple simply connected anisotropic groups $H$ of type ${}^2A_5$ over $R$ with $\beta_H(\omega_3)=0$, in the case of ${}^2E_{6,\,1}^{35}$;
\item simple simply connected anisotropic groups $H$ of type ${}^2D_4$ over $R$ with $\beta_{H_{R'}}(\omega_3)=0$, in the case of ${}^2E_{6,\,1}^{29}$;
\item simple simply connected anisotropic groups $H$ of type ${}^2A_3$ over $R$ with $\beta_H(\omega_2)=0$ and $\beta_{H_{R'}}(\omega_1)=0$,
in the case of ${}^2E_{6,\,2}^{16'}$;
\item $R_{R'/R}(\SL_1(A))$, where $A$ is an Azumaya algebra over $R'$ with $\ind A=\deg A=3$ and $\cores_{R'/R}([A])=0$, in the case
${}^2E_{6,\,2}^{16''}$.
\end{itemize}

In the case of ${}^2E_{6,\,0}^{78}$ $G$ is anisotropic; in the case of ${}^2E_{6,\,4}^{2}$ $G$ is quasi-split.
\end{thm}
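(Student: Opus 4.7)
The plan is to follow the template of Theorems~\ref{thm:An}--\ref{thm:D43}. By Lemma~\ref{lem:opinv} and Proposition~\ref{prop:listcomb}(9), the subset $J\subseteq D$ must be one of $\emptyset$, $\{2\}$, $\{1,6\}$, $\{2,4\}$, $\{1,2,6\}$, or $D$, giving precisely the six candidate Tits indices. The extremal cases $J=\emptyset$ and $J=D$ correspond to the anisotropic and quasi-split groups and require no further argument.

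For each of the four remaining indices I would first read off the type of $H=G_{an}$ from the subdiagram $D\setminus J$ with its induced $*$-action: ${}^2A_5$ for $J=\{2\}$; ${}^2D_4$ (the involution swaps two of the three outer nodes) for $J=\{1,6\}$; ${}^2A_3$ for $J=\{1,2,6\}$; and, for $J=\{2,4\}$, where $D\setminus J$ is the disjoint union of two copies of $A_2$ swapped by the involution, $H\simeq R_{R'/R}(\SL_1(A))$ with $R'$ the quadratic \'etale extension corresponding to the orbit $\{1,6\}$ and $A$ an Azumaya algebra over $R'$ with $\ind A=\deg A=3$.

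Next, I would extract the Tits-algebra conditions from Theorem~\ref{thm:levi}(1), which asserts $\beta_{H_O}(\alpha'_O)=0$ for every orbit $O\subseteq J$. The input is the Cartan relations $\alpha_1=2\omega_1-\omega_3$, $\alpha_2=2\omega_2-\omega_4$, $\alpha_4=2\omega_4-\omega_2-\omega_3-\omega_5$, $\alpha_6=2\omega_6-\omega_5$, together with the observation that an $E_6$-fundamental weight $\omega_i$ restricts trivially to $T_{H_{qs}}$ whenever $i\in J$ (then $\omega_i$ pairs as zero with each coroot $\alpha_j^\vee$, $j\in D\setminus J$). This yields: $\alpha'_2=-\omega_3^{A_5}$, hence $\beta_H(\omega_3)=0$, for ${}^2E_{6,1}^{35}$; after passing to $R'$, $\alpha'_1=-\omega_3^{D_4}$, hence $\beta_{H_{R'}}(\omega_3)=0$, for ${}^2E_{6,1}^{29}$; the pair $\beta_H(\omega_2)=0$ and $\beta_{H_{R'}}(\omega_1)=0$ for ${}^2E_{6,2}^{16'}$; and, for ${}^2E_{6,2}^{16''}$, the orbit $\{2\}$ produces nothing (both $\omega_2$ and $\omega_4$ vanish on $T_H$) while the orbit $\{4\}$ gives $\alpha'_4=-(\omega_3+\omega_5)|_{T_H}$, which via the Shapiro formula for the corestriction recalled before Theorem~\ref{thm:An2} converts to $\cores_{R'/R}([A])=0$. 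The converse---reconstructing $G$ from any anisotropic kernel satisfying these constraints---then follows immediately from Theorem~\ref{thm:levi}(2).

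The main obstacle will be the Shapiro book-keeping in the case ${}^2E_{6,2}^{16''}$, where $H$ is a Weil restriction and the $*$-invariant combination $\omega_3+\omega_5$ has to be correctly identified (accounting for the orientation-reversing Galois twist between the two $A_2$-components) with a single fundamental weight of the $A_2$-factor defined over $R'$. The other three cases reduce to short single-weight computations entirely parallel to those in Theorems~\ref{thm:An2}--\ref{thm:D43}.
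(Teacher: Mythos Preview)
Your proposal is correct and follows essentially the same route as the paper: invoke Lemma~\ref{lem:opinv} and Proposition~\ref{prop:listcomb}(9) to pin down the six indices, identify the type of $G_{an}$ from $D\setminus J$ with its induced $*$-action, and then read off the Tits-algebra constraints from Theorem~\ref{thm:levi} via the Cartan relations exactly as you wrote them. Your explicit mention of the converse (Theorem~\ref{thm:levi}\eqref{item:leviembed}) and your care with the corestriction identification in the ${}^2E_{6,2}^{16''}$ case make precise points the paper leaves implicit; the ``orientation-reversing'' worry there is harmless since $\cores_{R'/R}([A])=0$ is equivalent to $\cores_{R'/R}([A^{op}])=0$.
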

\begin{proof}
By Lemma~\ref{lem:opinv} and Proposition~\ref{prop:listcomb} the Tits index of $G$ is one of the listed above.

In the case of ${}^2E_{6,\,1}^{35}$ the anisotropic kernel $G_{an}$ is a group of type ${}^2A_5$. The Cartan matrix of $E_6$ shows that
$\alpha_2=2\omega_2-\omega_4$. By Theorem~\ref{thm:levi} we have
$$
0=\beta_{G_{an}}(\alpha'_2)=-\beta_{G_{an}}(\omega_3).
$$

In the case of ${}^2E_{6,\,1}^{29}$ the anisotropic kernel $G_{an}$ is a group of type ${}^2D_4$. Denote by $O=\Spec R'$ the orbit
corresponding to $\{1,\,6\}$. We have $\alpha_1=2\omega_1-\omega_2$, so
$$
0=\beta_{{G_{an}}_O}(\alpha'_O)=-\beta_{{G_{an}}_O}(\omega_3).
$$

In the case of ${}^2E_{6,\,2}^{16'}$ the anisotropic kernel $G_{an}$ is a group of type ${}^2A_3$. We have
$\alpha_1=2\omega_1-\omega_2$, $\alpha_2=2\omega_2-\omega_4$, so
\begin{align*}
&0=\beta_{{G_{an}}_O}(\alpha'_O)=-\beta_{{G_{an}}_O}(\omega_1);\\
&0=\beta_{G_{an}}(\alpha'_4)=-\beta_{G_{an}}(\omega_2).
\end{align*}

In the case of ${}^2E_{6,\,2}^{16''}$ the anisotropic kernel $G_{an}$ is isomorphic to $R_{R'/R}(\SL_1(A))$, where $A$
is an Azumaya algebra over $R'$ with $\ind A=\deg A=3$, $O\simeq\Spec R'$. We have
$\alpha_4=2\omega_4-\omega_2-\omega_3-\omega_5$, so
$$
0=\beta_{G_{an}}(\alpha'_4)=\cores_{R'/R}(\beta_{\SL_1(A)}(\omega_1))=\cores_{R'/R}([A]).
$$
\end{proof}\medskip

\setcounter{thm}{2}\begin{thm}[$\mathbf{E_7}$]\label{thm:E7}
Every simple simply connected group $G$ of type $E_7$ over $R$ has one of the following Tits indices:

\begin{equation}\tag{$E_{7,\,0}^{133}$}
\xymatrix{
{\bullet}\ar@{-}[r]&{\bullet}\ar@{-}[r]&{\bullet}\ar@{-}[d]\ar@{-}[r]&{\bullet}\ar@{-}[r]&{\bullet}\ar@{-}[r]&{\bullet}\\
&&{\bullet}
}
\end{equation}

\begin{equation}\tag{$E_{7,\,1}^{78}$}
\xymatrix{
{\bullet}\ar@{-}[r]&{\bullet}\ar@{-}[r]&{\bullet}\ar@{-}[d]\ar@{-}[r]&{\bullet}\ar@{-}[r]&{\bullet}\ar@{-}[r]&*+[o][F]{\bullet}\\
&&{\bullet}
}
\end{equation}

\begin{equation}\tag{$E_{7,\,1}^{66}$}
\xymatrix{
*+[o][F]{\bullet}\ar@{-}[r]&{\bullet}\ar@{-}[r]&{\bullet}\ar@{-}[d]\ar@{-}[r]&{\bullet}\ar@{-}[r]&{\bullet}\ar@{-}[r]&{\bullet}\\
&&{\bullet}
}
\end{equation}

\begin{equation}\tag{$E_{7,\,1}^{48}$}
\xymatrix{
{\bullet}\ar@{-}[r]&{\bullet}\ar@{-}[r]&{\bullet}\ar@{-}[d]\ar@{-}[r]&{\bullet}\ar@{-}[r]&*+[o][F]{\bullet}\ar@{-}[r]&{\bullet}\\
&&{\bullet}
}
\end{equation}

\begin{equation}\tag{$E_{7,\,2}^{31}$}
\xymatrix{
*+[o][F]{\bullet}\ar@{-}[r]&{\bullet}\ar@{-}[r]&{\bullet}\ar@{-}[d]\ar@{-}[r]&{\bullet}\ar@{-}[r]&*+[o][F]{\bullet}\ar@{-}[r]&{\bullet}\\
&&{\bullet}
}
\end{equation}

\begin{equation}\tag{$E_{7,\,3}^{28}$}
\xymatrix{
*+[o][F]{\bullet}\ar@{-}[r]&{\bullet}\ar@{-}[r]&{\bullet}\ar@{-}[d]\ar@{-}[r]&{\bullet}\ar@{-}[r]&*+[o][F]{\bullet}\ar@{-}[r]&*+[o][F]{\bullet}\\
&&{\bullet}
}
\end{equation}

\begin{equation}\tag{$E_{7,\,4}^{9}$}
\xymatrix{
*+[o][F]{\bullet}\ar@{-}[r]&*+[o][F]{\bullet}\ar@{-}[r]&*+[o][F]{\bullet}\ar@{-}[d]\ar@{-}[r]&{\bullet}\ar@{-}[r]&*+[o][F]{\bullet}\ar@{-}[r]&{\bullet}\\
&&{\bullet}
}
\end{equation}

\begin{equation}\tag{$E_{7,\,7}^{0}$}
\xymatrix{
*+[o][F]{\bullet}\ar@{-}[r]&*+[o][F]{\bullet}\ar@{-}[r]&*+[o][F]{\bullet}\ar@{-}[d]\ar@{-}[r]&*+[o][F]{\bullet}\ar@{-}[r]&*+[o][F]{\bullet}
\ar@{-}[r]&*+[o][F]{\bullet}\\
&&*+[o][F]{\bullet}
}
\end{equation}

The possible anisotropic kernels are the following:
\begin{itemize}
\item simple simply connected anisotropic groups $H$ of type ${}^1E_6$ over $R$ with $\beta_H=0$, in the case of $E_{7,\,1}^{78}$;
\item simple simply connected anisotropic groups $H$ of type ${}^1D_6$ over $R$ with $\beta_H(\omega_5)=0$, in the case of $E_{7,\,1}^{66}$;
\item $H\times\SL_1(E)$, where $E$ is an Azumaya algebra over $R$ with $\ind E=\deg E=2$, $H$ is a simple simply connected anisotropic group of type ${}^1D_5$
over $R$ with $\beta_H(\omega_4)=[E]$, in the case of $E_{7,\,1}^{48}$;
\item $H\times\SL_1(E)$, where $E$ is an Azumaya algebra over $R$ with $\ind E=\deg E=2$, $H$ is a simple simply connected anisotropic group of type ${}^1D_4$
over $R$ with $\beta_H(\omega_1)=0$ and $\beta_H(\omega_3)=[E]$, in the case of $E_{7,\,2}^{31}$;
\item simple simply connected anisotropic groups $H$ of type ${}^1D_4$ over $R$ with $\beta_H=0$, in the case of $E_{7,\,3}^{28}$;
\item $\SL_1(A)^3$, where $A$ is an Azumaya algebra over $R$ with $\ind A=\deg A=2$, in the case of $E_{7,\,4}^{9}$.
\end{itemize}

In the case of $E_{7,\,0}^{133}$ $G$ is anisotropic; in the case of $E_{7,\,7}^{0}$ $G$ is split.
\end{thm}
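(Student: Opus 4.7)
The plan is to follow the template already applied to types $A_n$, $B_n$, $C_n$, $D_n$ and $E_6$ in the preceding theorems. Since $E_7$ has no nontrivial Dynkin diagram automorphisms, $\Gamma=\{1\}$, so by Lemma~\ref{lem:opinv} and Proposition~\ref{prop:listcomb}(10) the Tits index of $G$ is $(E_7,J)$ with $J$ equal to one of $\emptyset$, $\{1\}$, $\{6\}$, $\{7\}$, $\{1,3\}$, $\{1,6\}$, $\{1,6,7\}$, $\{1,3,4,6\}$, or the whole diagram. Inspection of $D\setminus J$ fixes the Dynkin type, together with the decomposition into simple factors given by connected components, of the anisotropic kernel $G_{an}$; using the classification of groups of types $A$, $D$ and $E_6$ already established, each classical simple factor takes the form $\SL_1(A)$ for some Azumaya algebra $A$ of appropriate degree. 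This yields the eight indices listed in the theorem together with one extra combinatorial candidate $J=\{1,3\}$, which has to be excluded.

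The exclusion of $J=\{1,3\}$ proceeds in the same spirit as the analogous step in the proof of Theorem~\ref{thm:E6}. The anisotropic kernel is a simply connected group of type $D_5$, and the Cartan-matrix relation $\alpha_1=2\omega_1-\omega_3$ combined with Theorem~\ref{thm:levi}(1) produces a relation forcing an odd multiple of a Tits algebra of $G_{an}$ to vanish. Together with Proposition~\ref{prop:exp} and the $2$-power nature of the index of Tits algebras of spin groups (Proposition~\ref{prop:so}), this contradicts the nontriviality that the index must have if $G_{an}$ is anisotropic.

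For each of the remaining seven non-split, non-anisotropic indices I would then carry out the routine: for each vertex $i\in J$, read off the Cartan-matrix expansion of $\alpha_i$ as a combination of fundamental weights of $E_7$, restrict to $\Cent((G_{an})_{qs})$, and apply Theorem~\ref{thm:levi}(1) to obtain $\beta_{G_{an}}(\alpha'_i)=0$. Rewriting the left-hand side in terms of the Tits algebras of the simple factors of $G_{an}$, using the identifications $\beta_{\SL_1(E)}(\omega_1)=[E]$ and the familiar Cartan-matrix computations for types $A$ and $D$, yields the conditions listed in each bullet; for instance $[A_1]=[A_2]=[A_3]$ together with Lemma~\ref{lem:Div} in the $E_{7,4}^{9}$ case, or $\beta_H(\omega_k)=[E]$ in the mixed cases with one classical $\SL_1(E)$ factor. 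Existence and uniqueness of $G$ with a given Tits index and anisotropic kernel satisfying these constraints are then delivered by Theorem~\ref{thm:levi}(2)--(4).

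The main obstacle, and the only genuine calculation beyond book-keeping, is the $J=\{1,3\}$ exclusion: one has to identify the images of $\omega_1$ and $\omega_3$ under the restriction map $\Lambda(E_7)\to\Cocent(H_{qs})$ explicitly in terms of the fundamental weights of $D_5$, so as to pin down the precise numerical relation that contradicts Proposition~\ref{prop:exp}. Every other case reduces to expressing the relevant simple root as an integral combination of fundamental weights and matching the resulting Brauer-class identity to the criterion stated in the theorem.
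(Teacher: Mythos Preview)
Your overall plan matches the paper's, but the crucial exclusion of $J=\{1,3\}$ is wrong in a way that cannot be repaired as written. In Bourbaki's numbering of $E_7$ the branch vertex is $4$, with $2$ attached to $4$; removing $\{1,3\}$ leaves the chain $2$--$4$--$5$--$6$--$7$, which is of type $A_5$, not $D_5$. Consequently the anisotropic kernel is $\SL_1(A)$ for an Azumaya algebra $A$ with $\ind A=\deg A=6$, and Proposition~\ref{prop:so} is irrelevant here.

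Your choice of relation is also unusable: since both $1$ and $3$ lie in $J$, the restrictions $\omega_1'$ and $\omega_3'$ to $\Cent(H_{qs})$ vanish (each $\omega_i$ with $i\in J$ pairs trivially with every coroot of $H$), so $\alpha_1'=2\omega_1'-\omega_3'=0$ yields the empty identity $0=0$. The informative relation is $\alpha_3=2\omega_3-\omega_1-\omega_4$: only $\omega_4$ survives restriction, landing on the second fundamental weight of the $A_5$ factor, so Theorem~\ref{thm:levi}\,(1) gives $0=-\beta_{\SL_1(A)}(\omega_2)=-2[A]$. Thus $\exp A\mid 2$, and Proposition~\ref{prop:exp} (same prime factors for $\exp$ and $\ind$) contradicts $\ind A=6$. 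Note that the multiple forced to vanish is \emph{even}, not odd; the logical shape of the contradiction is opposite to what you sketched. The remaining cases in your outline are handled correctly and agree with the paper.
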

\begin{proof}
By Lemma~\ref{lem:opinv} and Proposition~\ref{prop:listcomb} the Tits index of $G$ is either one of the listed above or
the following:
$$
\xymatrix{
*+[o][F]{\bullet}\ar@{-}[r]&*+[o][F]{\bullet}\ar@{-}[r]&{\bullet}\ar@{-}[d]\ar@{-}[r]&{\bullet}\ar@{-}[r]&{\bullet}\ar@{-}[r]&{\bullet}\\
&&{\bullet}
}
$$

Let us first exclude the latter case. The anisotropic kernel $G_{an}$ is isomorphic to $\SL_1(A)$ for some
Azumaya algebra $A$ over $R$ with $\ind A=\deg A=6$. The Cartan matrix of $E_7$ shows that $\alpha_3=2\omega_3-\omega_1-\omega_4$.
By Theorem~\ref{thm:levi} we have
$$
0=\beta_{G_{an}}(\alpha'_3)=-\beta_{\SL_1(A)}(\omega_2)=2[A].
$$
Hence $\exp A=2$, but this contradicts  Proposition~\ref{prop:exp}.

In the case of $E_{7,\,1}^{78}$ the anisotropic kernel $G_{an}$ is of type ${}^1E_6$. We have $\alpha_7=2\omega_7-\omega_6$, so
$$
0=\beta_{G_{an}}(\alpha'_7)=-\beta_{G_{an}}(\omega_6).
$$
It follows that $\beta_{G_{an}}=0$.

In the case of $E_{7,\,1}^{66}$ the anisotropic kernel $G_{an}$ is of type ${}^1D_6$. We have $\alpha_1=2\omega_1-\omega_3$, so
$$
0=\beta_{G_{an}}(\alpha'_1)=-\beta_{G_{an}}(\omega_5).
$$

In the case of $E_{7,\,1}^{48}$ the anisotropic kernel $G_{an}$ is isomorphic to $H\times\SL_1(E)$, where $H$ is a group of
type ${}^1D_6$, $E$ is an Azumaya algebra over $R$ with $\ind E=\deg E=2$. We have $\alpha_6=2\omega_6-\omega_5-\omega_7$, so
$$
0=\beta_{G_{an}}(\alpha'_6)=-\beta_H(\omega_4)-\beta_{\SL_1(E)}(\omega_1)=-\beta_H(\omega_4)+[E].
$$

In the case of $E_{7,\,2}^{31}$ the anisotropic kernel $G_{an}$ is isomorphic to $H\times\SL_1(E)$, where $H$ is a group of
type ${}^1D_4$, $E$ is an Azumaya algebra over $R$ with $\ind E=\deg E=2$. We have $\alpha_1=2\omega_1-\omega_3$,
$\alpha_6=2\omega_6-\omega_5-\omega_7$, so
\begin{align*}
&0=\beta_{G_{an}}(\alpha'_1)=-\beta_H(\omega_1);\\
&0=\beta_{G_{an}}(\alpha'_6)=-\beta_H(\omega_3)-\beta_{\SL_1(E)}(\omega_1)=-\beta_H(\omega_3)+[E].
\end{align*}

In the case of $E_{7,\,3}^{28}$ the anisotropic kernel $G_{an}$ is of type ${}^1D_4$. We have $\alpha_1=2\omega_1-\omega_3$,
$\alpha_6=2\omega_6-\omega_5-\omega_7$, so
\begin{align*}
&0=\beta_{G_{an}}(\alpha'_1)=-\beta_{G_{an}}(\omega_1);\\
&0=\beta_{G_{an}}(\alpha'_6)=-\beta_{G_{an}}(\omega_3).
\end{align*}
It follows that $\beta_{G_{an}}=0$.

In the case of $E_{7,\,3}^{28}$ the anisotropic kernel $G_{an}$ is isomorphic to $\SL_1(A_1)\times\SL_1(A_2)\times\SL_1(A_3)$
for some Azumaya algebras $A_1$, $A_2$, $A_3$ over $R$ with $\ind A_1=\deg A_1=\ind A_2=\deg A_2=\ind A_3=\deg A_3=2$.
We have $\alpha_4=2\omega_4-\omega_2-\omega_3-\omega_5$, $\alpha_6=2\omega_6-\omega_5-\omega_7$, so
\begin{align*}
&0=\beta_{G_{an}}(\alpha'_4)=-\beta_{\SL_1(A_1)}(\omega_1)-\beta_{\SL_1(A_2)}(\omega_1)=[A_1]-[A_2];\\
&0=\beta_{G_{an}}(\alpha'_6)=-\beta_{\SL_1(A_2)}(\omega_1)-\beta_{\SL_1(A_3)}(\omega_1)=[A_2]-[A_3].
\end{align*}
By Lemma~\ref{lem:Div} $A_1\simeq A_2\simeq A_3$.
\end{proof}\medskip

\setcounter{thm}{2}\begin{thm}[$\mathbf{E_8}$]\label{thm:E8}
Every simple simply connected group $G$ of type $E_8$ over $R$ has one of the following Tits indices:

\begin{equation}\tag{$E_{8,\,0}^{248}$}
\xymatrix{
{\bullet}\ar@{-}[r]&{\bullet}\ar@{-}[r]&{\bullet}\ar@{-}[d]\ar@{-}[r]&{\bullet}\ar@{-}[r]&{\bullet}\ar@{-}[r]&{\bullet}\ar@{-}[r]&{\bullet}\\
&&{\bullet}
}
\end{equation}

\begin{equation}\tag{$E_{8,\,1}^{133}$}
\xymatrix{
{\bullet}\ar@{-}[r]&{\bullet}\ar@{-}[r]&{\bullet}\ar@{-}[d]\ar@{-}[r]&{\bullet}\ar@{-}[r]&{\bullet}\ar@{-}[r]&{\bullet}\ar@{-}[r]&*+[o][F]{\bullet}\\
&&{\bullet}
}
\end{equation}

\begin{equation}\tag{$E_{8,\,1}^{91}$}
\xymatrix{
*+[o][F]{\bullet}\ar@{-}[r]&{\bullet}\ar@{-}[r]&{\bullet}\ar@{-}[d]\ar@{-}[r]&{\bullet}\ar@{-}[r]&{\bullet}\ar@{-}[r]&{\bullet}\ar@{-}[r]&{\bullet}\\
&&{\bullet}
}
\end{equation}

\begin{equation}\tag{$E_{8,\,2}^{78}$}
\xymatrix{
{\bullet}\ar@{-}[r]&{\bullet}\ar@{-}[r]&{\bullet}\ar@{-}[d]\ar@{-}[r]&{\bullet}\ar@{-}[r]&{\bullet}\ar@{-}[r]&*+[o][F]{\bullet}\ar@{-}[r]&*+[o][F]{\bullet}\\
&&{\bullet}
}
\end{equation}

\begin{equation}\tag{$E_{8,\,2}^{66}$}
\xymatrix{
*+[o][F]{\bullet}\ar@{-}[r]&{\bullet}\ar@{-}[r]&{\bullet}\ar@{-}[d]\ar@{-}[r]&{\bullet}\ar@{-}[r]&{\bullet}\ar@{-}[r]&{\bullet}\ar@{-}[r]&*+[o][F]{\bullet}\\
&&{\bullet}
}
\end{equation}

\begin{equation}\tag{$E_{8,\,4}^{28}$}
\xymatrix{
*+[o][F]{\bullet}\ar@{-}[r]&{\bullet}\ar@{-}[r]&{\bullet}\ar@{-}[d]\ar@{-}[r]&{\bullet}\ar@{-}[r]&*+[o][F]{\bullet}\ar@{-}[r]&*+[o][F]{\bullet}\ar@{-}[r]&*+[o][F]{\bullet}\\
&&{\bullet}
}
\end{equation}

\begin{equation}\tag{$E_{8,\,8}^{0}$}
\xymatrix{
*+[o][F]{\bullet}\ar@{-}[r]&*+[o][F]{\bullet}\ar@{-}[r]&*+[o][F]{\bullet}\ar@{-}[d]\ar@{-}[r]&*+[o][F]{\bullet}\ar@{-}[r]&*+[o][F]{\bullet}\ar@{-}[r]
&*+[o][F]{\bullet}\ar@{-}[r]&*+[o][F]{\bullet}\\
&&*+[o][F]{\bullet}
}
\end{equation}

The possible anisotropic kernels are the following:
\begin{itemize}
\item simple simply connected anisotropic groups $H$ of type $E_7$ over $R$ with $\beta_H=0$, in the case of $E_{8,\,1}^{133}$;
\item simple simply connected anisotropic groups $H$ of type ${}^1D_7$ over $R$ with $\beta_H=0$, in the case of $E_{8,\,1}^{91}$;
\item simple simply connected anisotropic groups $H$ of type ${}^1E_6$ over $R$ with $\beta_H=0$, in the case of $E_{8,\,2}^{78}$;
\item simple simply connected anisotropic groups $H$ of type ${}^1D_6$ over $R$ with $\beta_H=0$, in the case of $E_{8,\,2}^{66}$;
\item simple simply connected anisotropic groups $H$ of type ${}^1D_4$ over $R$ with $\beta_H=0$, in the case of $E_{8,\,4}^{28}$.
\end{itemize}

In the case of $E_{8,\,0}^{248}$ $G$ is anisotropic; in the case of $E_{8,\,8}^{0}$ $G$ is split.
\end{thm}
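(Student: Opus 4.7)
The plan is to follow closely the structure of the proof of Theorem~\ref{thm:E7}. First, Lemma~\ref{lem:opinv} combined with Proposition~\ref{prop:listcomb} restricts the Tits index of $G$ to one of the seven listed; in contrast with the $E_6$ and $E_7$ arguments, no spurious combinatorially admissible index needs to be excluded by a Tits algebra obstruction, since the $E_8$ entry of Proposition~\ref{prop:listcomb} matches the theorem exactly. The extreme cases $J=\emptyset$ and $J=D$ are the anisotropic and split cases by definition, so only five indices require further analysis.

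The key simplification peculiar to type $E_8$ is that the simply connected group coincides with the adjoint group: $\Cent(G_{qs})=1$ and $\Cocent(G_{qs})=0$, hence $\beta_G\equiv 0$ for every inner form $G$. By Theorem~\ref{thm:levi}(\ref{item:titsrestr}), this forces $\beta_H(\lambda')=\beta_G(\lambda)=0$ for every weight $\lambda$ of $G_{qs}$. For each nontrivial index I would read off the type of $G_{an}$ by deleting the circled vertices from the Dynkin diagram, obtaining $E_7$, ${}^1D_7$, ${}^1E_6$, ${}^1D_6$, ${}^1D_4$ respectively. In the style of the preceding $E$-type proofs, the conclusion $\beta_{G_{an}}=0$ is extracted from the appropriate rows of the $E_8$ Cartan matrix: the relation $\alpha_8=2\omega_8-\omega_7$ for $E_{8,1}^{133}$ yields $\beta_H(\omega'_7)=0$, the generator of $\Cocent(E_7)$; the relation $\alpha_1=2\omega_1-\omega_3$ for $E_{8,1}^{91}$ kills the generator of $\Cocent(D_7)$; and analogous pairs or quadruples of Cartan relations for $J=\{7,8\}$, $\{1,8\}$, and $\{1,6,7,8\}$ handle the remaining three cases, exhausting in each instance the generators of $\Cocent(H_{qs})$.

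Conversely, given an anisotropic kernel $H$ of the prescribed type satisfying $\beta_H=0$, the existence of an inner form $G$ of $E_8$ with the stated index and $G_{an}\simeq H$ follows from Theorem~\ref{thm:levi}(\ref{item:leviembed}): the compatibility condition $\beta_{H_O}(\alpha'_O)=0$ is automatic once $\beta_H\equiv 0$. The main obstacle, as in the earlier $E$-type theorems, is the combinatorial bookkeeping --- verifying that the cocenter of each $H_{qs}$ is actually hit by the restrictions coming from the chosen Cartan relations, and correctly tracking Bourbaki's vertex numbering under each subdiagram embedding.
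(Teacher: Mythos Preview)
Your proposal is correct and follows essentially the same route as the paper's proof: invoke Lemma~\ref{lem:opinv} and Proposition~\ref{prop:listcomb} to restrict to the seven listed indices (with no spurious index to eliminate), and then for each of the five intermediate indices use the relevant rows of the $E_8$ Cartan matrix together with Theorem~\ref{thm:levi}\,(\ref{item:titsrestr}) to show that $\beta_{G_{an}}=0$. Your preliminary observation that $\Cent(G_{qs})=1$ forces $\beta_G\equiv 0$ is correct, but note that it does not actually shorten the argument: since $\Lambda=\Lambda_r$ for $E_8$, the statement ``$\beta_H(\lambda')=0$ for every weight $\lambda$'' is exactly the ``in particular'' clause of Theorem~\ref{thm:levi}\,(\ref{item:titsrestr}), and the real work in either formulation is verifying that the restrictions $\alpha'_j$ for $j\in J$ hit generators of $\Cocent(H_{qs})$---which is precisely the case-by-case Cartan bookkeeping you outline and the paper carries out.
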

\begin{proof}
By Lemma~\ref{lem:opinv} and Proposition~\ref{prop:listcomb} the Tits index of $G$ is one of the listed above.

In the case of $E_{8,\,1}^{133}$ the anisotropic kernel $G_{an}$ is of type $E_7$. The Cartan matrix of $E_8$ shows that
$\alpha_8=2\omega_8-\omega_7$. By Theorem~\ref{thm:levi} we have
$$
0=\beta_{G_{an}}(\alpha'_8)=-\beta_{G_{an}}(\omega_7).
$$
It follows that $\beta_{G_{an}}=0$.

In the case of $E_{8,\,1}^{91}$ the anisotropic kernel $G_{an}$ is of type ${}^1D_7$. We have $\alpha_1=2\omega_1-\omega_3$, so
$$
0=\beta_{G_{an}}(\alpha'_1)=-\beta_{G_{an}}(\omega_6).
$$
It follows that $\beta_{G_{an}}=0$.

In the case of $E_{8,\,2}^{78}$ the anisotropic kernel $G_{an}$ is of type ${}^1E_6$. We have $\alpha_7=2\omega_7-\omega_6-\omega_8$, so
$$
0=\beta_{G_{an}}(\alpha'_7)=-\beta_{G_{an}}(\omega_6).
$$
It follows that $\beta_{G_{an}}=0$.

In the case of $E_{8,\,2}^{66}$ the anisotropic kernel $G_{an}$ is of type ${}^1D_6$. We have $\alpha_1=2\omega_1-\omega_3$,
$\alpha_8=2\omega_8-\omega_7$, so
\begin{align*}
&0=\beta_{G_{an}}(\alpha'_1)=-\beta_{G_{an}}(\omega_5);\\
&0=\beta_{G_{an}}(\alpha'_8)=-\beta_{G_{an}}(\omega_1).
\end{align*}
It follows that $\beta_{G_{an}}=0$.

In the case of $E_{8,\,4}^{28}$ the anisotropic kernel $G_{an}$ is of type ${}^1D_4$. We have $\alpha_1=2\omega_1-\omega_3$,
$\alpha_6=2\omega_6-\omega_5-\omega_7$, so
\begin{align*}
&0=\beta_{G_{an}}(\alpha'_1)=-\beta_{G_{an}}(\omega_1);\\
&0=\beta_{G_{an}}(\alpha'_6)=-\beta_{G_{an}}(\omega_3).
\end{align*}
It follows that $\beta_{G_{an}}=0$.
\end{proof}\medskip

\setcounter{thm}{2}\begin{thm}[$\mathbf{F_4}$]\label{thm:F4}
Every simple simply connected group $G$ of type $F_4$ over $R$ has one of the following Tits indices:

\begin{equation}\tag{$F_{4,\,0}^{52}$}
\xymatrix{
{\bullet}\ar@{-}[r]&{\bullet}\ar@{=>}[r]&{\bullet}\ar@{-}[r]&{\bullet}
}
\end{equation}

\begin{equation}\tag{$F_{4,\,1}^{21}$}
\xymatrix{
{\bullet}\ar@{-}[r]&{\bullet}\ar@{=>}[r]&{\bullet}\ar@{-}[r]&*+[o][F]{\bullet}
}
\end{equation}

\begin{equation}\tag{$F_{4,\,4}^{0}$}
\xymatrix{
*+[o][F]{\bullet}\ar@{-}[r]&*+[o][F]{\bullet}\ar@{=>}[r]&*+[o][F]{\bullet}\ar@{-}[r]&*+[o][F]{\bullet}
}
\end{equation}

The possible anisotropic kernels in the case of $F_{4,\,1}^{21}$ are simple simply connected anisotropic groups $H$ of type $B_3$ over $R$ with $\beta_H=0$.

In the case of $F_{4,\,0}^{52}$ $G$ is anisotropic; in the case of $F_{4,\,4}^{0}$ $G$ is split.
\end{thm}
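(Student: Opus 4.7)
My approach follows the template of the preceding exceptional-type theorems. First, I apply Lemma~\ref{lem:opinv} together with case~(12) of Proposition~\ref{prop:listcomb} to reduce the combinatorially admissible Tits indices to $J\in\{\emptyset,\{1\},\{4\},\{1,4\},D\}$; the cases $J=\emptyset$ and $J=D$ are the anisotropic group $F_{4,0}^{52}$ and the split group $F_{4,4}^{0}$ respectively and require no further argument. The remaining work is to establish the case $F_{4,1}^{21}$ corresponding to $J=\{4\}$ and to rule out $J=\{1\}$ and $J=\{1,4\}$.

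For the two cases to be excluded, the anisotropic kernel $H$ is simply connected of type $C_3$, respectively $B_2$, each with cocenter $\ZZ/2$. Using the Cartan matrix of $F_4$ and identifying the simple roots of the standard Levi with a Bourbaki-numbered copy of $C_3$ or $B_2$, I compute the restrictions to $\Cent(H_{qs})$ modulo $\Lar$: for $J=\{1\}$ the root $\alpha_1$ gives the generator $-\omega_3^{C_3}$ of $\Cocent(H_{qs})$, while for $J=\{1,4\}$ the root $\alpha_4$ gives the generator $-\omega_2^{B_2}$. Since the cocenter of $F_4$ is trivial, Theorem~\ref{thm:levi}\eqref{item:titsrestr} forces $\beta_H=0$ in both cases. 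As $\Pic(\Dyn(H))=0$ over the semilocal base, Proposition~\ref{prop:strin} then lifts the class defining $H$ to $\HH^1(R,H_{qs})$, where $H_{qs}$ is the split $\Sp_6$ or $\Spin_5\simeq\Sp_4$; by \cite[Ch.~I, Cor.~4.1.2]{Knus} these cohomology sets are trivial, so $H$ would be split, contradicting its anisotropy.

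For the surviving index $F_{4,1}^{21}$ the anisotropic kernel $H$ is simply connected of type $B_3$ with cocenter generated by $\omega_3^{B_3}$, and an analogous computation gives $\alpha_4'\equiv-\omega_3^{B_3}\pmod{\Lar}$. Theorem~\ref{thm:levi}\eqref{item:titsrestr} therefore yields the necessary condition $\beta_H=0$. Conversely, given any anisotropic simply connected $H$ of type $B_3$ over $R$ with $\beta_H=0$, the same calculation verifies the hypothesis of Theorem~\ref{thm:levi}\eqref{item:leviembed} for the unique orbit $O=\{4\}\not\subset\tf(P_{qs})$, producing a group $G$ with Tits index $F_{4,1}^{21}$; its uniqueness follows from part~\eqref{item:unique} of Theorem~\ref{thm:levi}, using $\Pic(\Dyn(G))=0$ (because $F_4$ admits no nontrivial Dynkin automorphism and $R$ is semilocal).

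The main obstacle will be the explicit combinatorial identification of the images of $\omega_i^{F_4}|_{\Cent(H_{qs})}$ in the various cocenters: one has to pair fundamental weights of $F_4$ against the coroots of each Levi subsystem while keeping careful track of Bourbaki's different numbering conventions for $B_n$ and $C_n$ (whose long and short ends are switched). Once this bookkeeping is complete, the exclusion step closes exactly as in the proof of Theorem~\ref{thm:Cn} via Proposition~\ref{prop:strin} and the triviality of $\HH^1(R,\Sp_{2n})$, and the existence statement reduces to Theorem~\ref{thm:levi} in the standard way.
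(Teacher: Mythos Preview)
Your proof is correct and follows essentially the same route as the paper: the combinatorial reduction via Lemma~\ref{lem:opinv} and Proposition~\ref{prop:listcomb}, the computation of the restricted roots $\alpha_j'$ in the cocenter of the anisotropic kernel, and the exclusion of $J=\{1\}$ and $J=\{1,4\}$ by forcing $\beta_H=0$ all match the paper exactly. The only cosmetic difference is that where the paper cites Proposition~\ref{prop:sp} to conclude that a type~$C_n$ group with $\beta=0$ is split, you unfold that proposition into its constituents (Proposition~\ref{prop:strin} plus the vanishing of $\HH^1(R,\Sp_{2n})$ from~\cite{Knus}); you also spell out the converse via Theorem~\ref{thm:levi}\eqref{item:leviembed}--\eqref{item:unique}, which the paper leaves implicit in the general framework.
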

\begin{proof}
By Lemma~\ref{lem:opinv} and Proposition~\ref{prop:listcomb} the Tits index of $G$ is either one of the listed above or
one of the following:
$$
\xymatrix{
*+[o][F]{\bullet}\ar@{-}[r]&{\bullet}\ar@{=>}[r]&{\bullet}\ar@{-}[r]&{\bullet}
}
$$

$$
\xymatrix{
*+[o][F]{\bullet}\ar@{-}[r]&{\bullet}\ar@{=>}[r]&{\bullet}\ar@{-}[r]&*+[o][F]{\bullet}
}
$$

Let us exclude the two latter cases. In the first of them the anisotropic kernel $G_{an}$ is of type $C_3$.
The Cartan matrix of $F_4$ shows that $\alpha_1=2\omega_1-\omega_2$. By Theorem~\ref{thm:levi} we have
$$
0=\beta_{G_{an}}(\alpha'_1)=-\beta_{G_{an}}(\omega_3).
$$
It follows that $\beta_{G_{an}}=0$, in contradiction with Proposition~\ref{prop:sp}.

In the second case the anisotropic kernel $G_{an}$ is of type $C_2$. We have $\alpha_4=2\omega_4-\omega_3$, so
$$
0=\beta_{G_{an}}(\alpha'_4)=-\beta_{G_{an}}(\omega_1).
$$
It follows that $\beta_{G_{an}}=0$, in contradiction with Proposition~\ref{prop:sp}.

In the case of $F_{4,\,1}^{21}$ $G_{an}$ is of type $B_3$. We have $\alpha_4=2\omega_4-\omega_3$, so
$$
0=\beta_{G_{an}}(\alpha'_4)=-\beta_{G_{an}}(\omega_3).
$$
It follows that $\beta_{G_{an}}=0$.
\end{proof}\medskip

\setcounter{thm}{2}\begin{thm}[$\mathbf{G_2}$]\label{thm:G2}
Every simple simply connected group $G$ of type $G_2$ over $R$ has one of the following Tits indices:

\begin{equation}\tag{$G_{2,\,0}^{14}$}
\xymatrix{
{\bullet}\ar@3{<-}[r]&{\bullet}
}
\end{equation}

\begin{equation}\tag{$G_{2,\,2}^{0}$}
\xymatrix{
*+[o][F]{\bullet}\ar@3{<-}[r]&*+[o][F]{\bullet}
}
\end{equation}

In the case of $G_{2,\,0}^{14}$ $G$ is anisotropic; in the case of $G_{2,\,2}^{0}$ $G$ is split.
\end{thm}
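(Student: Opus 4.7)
I plan to follow the same two-step recipe used for the other exceptional types in this section: first narrow down the candidate Tits indices via Lemma~\ref{lem:opinv} and Proposition~\ref{prop:listcomb}, and then dispose of the one remaining spurious index using Theorem~\ref{thm:levi}\eqref{item:titsrestr}. Item~(13) of Proposition~\ref{prop:listcomb} leaves only three possibilities for the subset $J$ underlying the Tits index of $G$, namely $\emptyset$, $\{2\}$, or the whole of $D$. The case $J=\emptyset$ produces a quasi-split group, which for type $G_2$ is automatically split (as $G_2$ admits no non-trivial diagram automorphisms), yielding the index $G_{2,\,2}^{0}$; the case $J=D$ is by definition the anisotropic index $G_{2,\,0}^{14}$. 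Everything therefore reduces to ruling out $J=\{2\}$.

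Assume $J=\{2\}$. The Dynkin diagram of the anisotropic kernel $G_{an}$ is the single vertex $\{1\}$, so by Theorem~\ref{thm:An} we have $G_{an}\simeq\SL_1(A)$ for some Azumaya algebra $A$ over $R$ with $\ind A=\deg A=2$; in particular $\beta_{G_{an}}(\omega_1)=[A]$ and $2[A]=0$. Reading off the Cartan matrix of $G_2$ gives the relation $\alpha_2=-3\omega_1+2\omega_2$. Because vertex~$2$ lies in $J$, we have $\langle\omega_2,\alpha_1^\vee\rangle=0$, so the restriction of $\omega_2$ to the maximal torus of $H_{qs}\simeq\SL_2$ is already trivial, while the restriction of $\omega_1$ generates $\Cocent(H_{qs})$. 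Applying Theorem~\ref{thm:levi}\eqref{item:titsrestr} to the root $\alpha_2\in\Lar^*$ therefore yields
\[
0=\beta_{G_{an}}(\alpha_2')=-3\beta_{G_{an}}(\omega_1)=-3[A],
\]
and combined with $2[A]=0$ this forces $[A]=0$. Hence $A$ is split and $\SL_1(A)$ is isotropic, contradicting the choice of $G_{an}$.

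The argument contains no real obstacle; its essence is that the triple bond of $G_2$ contributes the coefficient $-3$ in the Cartan relation, and since $\gcd(3,2)=1$ this immediately annihilates the $2$-torsion class $[A]$. The step is a direct analogue of the exclusion arguments appearing in the proofs of Theorems~\ref{thm:F4}, \ref{thm:E6}, and \ref{thm:E7}.
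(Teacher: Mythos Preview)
Your exclusion of the case $J=\{2\}$ is exactly the paper's argument: use $\alpha_2=2\omega_2-3\omega_1$, apply Theorem~\ref{thm:levi}\eqref{item:titsrestr} to get $-3[A]=0$, and combine with $2[A]=0$ (equivalently, Proposition~\ref{prop:exp}) to reach a contradiction. That part is fine and matches the paper verbatim.

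However, you have reversed the two extreme cases. In the conventions of \S\ref{sec:comb}, the set $J$ is the \emph{complement} of $\tf_{\min}$, and the circled vertices are precisely those in $J$. Thus $J=\emptyset$ means $\tf_{\min}=\Dyn(G)$, i.e.\ $G$ is anisotropic and the index is $G_{2,\,0}^{14}$ (no circles); while $J=D$ means $\tf_{\min}=\emptyset$, i.e.\ $G$ is quasi-split and hence split, giving $G_{2,\,2}^{0}$ (all vertices circled). Your sentence ``The case $J=\emptyset$ produces a quasi-split group \ldots\ yielding the index $G_{2,\,2}^{0}$; the case $J=D$ is by definition the anisotropic index $G_{2,\,0}^{14}$'' has these swapped. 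This does not affect the substantive step of the proof, but it should be corrected.
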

\begin{proof}
By Lemma~\ref{lem:opinv} and Proposition~\ref{prop:listcomb} the Tits index of $G$ is either one of the listed above or
the following:
$$
\xymatrix{
{\bullet}\ar@3{<-}[r]&*+[o][F]{\bullet}
}
$$
We need to exclude the latter case. The anisotropic kernel $G_{an}$ is isomorphic to $\SL_1(A)$ for some Azumaya
algebra $A$ over $R$ with $\ind A=\deg A=2$. The Cartan matrix of $G_2$ shows that $\alpha_2=2\omega_2-3\omega_1$.
By Theorem~\ref{thm:levi} we have
$$
0=\beta_{G_{an}}(\alpha'_2)=-3\beta_{\SL_1(A)}(\omega_1)=-3[A].
$$
But by Proposition~\ref{prop:exp} $2[A]=0$, hence $[A]=0$, a contradiction.
\end{proof}\medskip

\section{Existence of indices}\label{sec:exist}

For the sake of completeness we give here a new uniform proof of the existence
of indices of exceptional inner type over fields (note that all indices
of outer types ${}^2E_6$, ${}^3D_4$, and ${}^6D_4$ appear already over
number fields).

\begin{thm}\label{th:ex} For any field $F$ and any prescribed Tits index of exceptional inner type
listed in Section~\ref{sec:ind}, there exists a field extension $E/F$ and a simple
algebraic group $G$ over $E$ having that Tits index.
\end{thm}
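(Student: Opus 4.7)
The plan is to realize each exceptional inner Tits index from Section~\ref{sec:ind} by a construct-and-obstruct strategy: first exhibit a candidate group $G$ whose Tits index has at least the prescribed circled set, then rule out any further splitting via a canonical-dimension lower bound.

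For the construction, given a target index $(D,J)$, I would take a split simple simply connected group $G_0$ of type $D$ over a prime field $F_0$, its standard parabolic $P_0$ of type $D\setminus J$, its standard Levi $L_0$, and the derived subgroup $H_0$ of $L_0$. Choose $\xi$ to be a versal $H_0^{ad}$-torsor, defined over a purely transcendental extension $E/F_0$ of sufficiently large transcendence degree. Via the inclusions $H_0^{ad}\hookrightarrow L_0^{ad}\hookrightarrow G_0^{ad}$, the class of $\xi$ yields an element of $\HH^1(E,G_0^{ad})$, and then Theorem~\ref{thm:levi}\eqref{item:leviembed} provides a simple simply connected inner form $G/E$ of $G_0$ admitting a parabolic subgroup of type $D\setminus J$. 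Consequently the true Tits index $(D,J')$ of $G$ satisfies $J\subseteq J'$. For the fully anisotropic target indices ($E_{8,0}^{248}$, $E_{7,0}^{133}$, ${}^1E_{6,0}^{78}$, $F_{4,0}^{52}$, $G_{2,0}^{14}$) one takes $J=\emptyset$, i.e.\ a versal $G_0^{ad}$-torsor; the fully split indices require no argument.

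The obstruction step is to exclude $J\subsetneq J'$. Let $X$ denote the variety of Borel subgroups of $G$ and $X^{an}$ that of the anisotropic kernel $G^{an}$. Since $G$ has a minimal parabolic over $E$, the natural projection of $X$ onto the minimal-parabolic variety $Y$ has, over any $E$-point of $Y$, a fibre birational to an $E$-form of $X^{an}$, which yields the inequality $\cd_p(X)\le\dim X^{an}$ for every prime $p$. For the versal $\xi$ chosen above, $\cd_p(X)$ attains the maximum listed in~\cite{PSZ} (computed via the $J$-invariant), and for a prime $p$ dictated by the target index this maximum equals precisely the number of positive roots of the anisotropic kernel of type $D\setminus J$ stipulated by the target. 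A strict enlargement $J\subsetneq J'$ would force $X^{an}$ to correspond to a proper subdiagram of $D\setminus J$ (by the list in Theorems~\ref{thm:E6}--\ref{thm:G2}), yielding $\dim X^{an}<\cd_p(X)$ and contradicting the inequality. Hence $J'=J$.

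The principal obstacle is the finite case-by-case verification in the second step: for each exceptional inner index listed in Theorems~\ref{thm:E6}, \ref{thm:E7}, \ref{thm:E8}, \ref{thm:F4}, and \ref{thm:G2}, one must single out a prime $p$ for which the PSZ-maximum of $\cd_p$ of the Borel variety of a generic $H_0^{ad}$-torsor matches the Borel dimension of the prescribed anisotropic kernel. Pairing these invariants is the combinatorial heart of the argument, while the versal construction and the lifting from $H_0^{ad}$- to $G_0^{ad}$-torsors via Theorem~\ref{thm:levi} are formal. Once the pairing is checked in each of the finitely many cases, the existence of every exceptional inner Tits index is established.
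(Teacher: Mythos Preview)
Your construction step is essentially the paper's: take a generic torsor under the semisimple part $H_0$ of the Levi and push it forward to $G_0^{ad}$. The obstruction step, however, does not go through as written.

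The equality you assert---that for a suitable prime $p$ the maximal value of $\cd_p(X)$ from~\cite{PSZ} equals $\dim X^{an}$, the number of positive roots of the prescribed anisotropic kernel---is simply false. For instance, for the anisotropic index $E_{8,0}^{248}$ one has $\dim X^{an}=|\Phi^+(E_8)|=120$, whereas the maximal values of $\cd_p(X)$ are $60$, $28$, $24$ for $p=2,3,5$; for $F_{4,1}^{21}$ the kernel $B_3$ has $9$ positive roots but the maximal $\cd_2(X)$ is $3$; for ${}^1E_{6,2}^{16}$ the kernel $A_2\times A_2$ has $6$ positive roots but the maximal $\cd_3(X)$ is $2$. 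Worse, your inequality $\cd_p(X)\le\dim X^{an}$, while correct, is too coarse to exclude strictly larger $J'$: if the true index of a generic $E_8$ were $E_{8,1}^{133}$, the anisotropic kernel would be of type $E_7$ with $\dim X^{an}=63$, and $60\le 63$ gives no contradiction.

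The paper's argument avoids the dimension bound entirely. The point is that $\cd_p(X)$ is an explicit monotone function of the $J$-invariant $J_p(G)$, and by~\cite[Cor.~5.19]{PSZ} one has $J_p(G)=J_p(H)=J_p(H')$ for both the constructed Levi kernel $H$ and the true anisotropic kernel $H'$. Since $H$ is generic, $J_p(H)$ is maximal, so $\cd_p(X)$ equals the table entry for the target index; on the other hand $J_p(H')$ is bounded by the maximal value for a generic torsor under a group of type $H'$, so $\cd_p(X)$ is at most the table entry for the index with kernel $H'$. One then just checks that for any two distinct indices of the same type there is a prime $p$ separating the table entries. The upper bound you need is thus another entry of the same table, not $\dim X^{an}$.
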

\begin{proof}
Denote by $H_0$ the derived subgroup of
the standard Levi subgroup of a parabolic subgroup $P_0$ in the split adjoint
group $G_0^{ad}$ over $F$. Now consider a \emph{generic torsor} $\zeta$ under
$H_0$ over an extension $E/F$. Recall that to construct $\zeta$ one chooses a
faithful representation $H_0\to\GL_n$, considers $E=F(\GL_n/H_0)$, and then
takes the image in $\HH^1(E,\,H_0)$ of the generic point in $\GL_n/H_0(E)$
under the connecting map  arising from the sequence
$$
1\to H_0\to\GL_n\to\GL_n/H_0\to 1.
$$
After that we take the image $\xi$ of $\zeta$ in $\HH^1(E,\,G_0^{ad})$ and
consider the corresponding group $G$ over $E$. Obviously $G$ has a parabolic
subgroup $P$ whose Levi part is isomorphic to the group $H$ corresponding to
$\zeta$. In general it may happen that $H$ is isotropic, that is the Tits
index of $G$ contains more circled vertices than desired. Our goal is to show
that if $P_0$ corresponds to one of the indices listed in Section~\ref{sec:ind},
this is not the case.

To this end we employ an invariant $\cd_p(X)$ of a projective homogeneous
variety $X$ called the \emph {$p$-relative canonical dimension} of $X$;
see \cite{KM} for the definition and basic properties. We take $X$ to be
the variety of Borel subgroups of $G$. It is shown in
\cite[Proposition~6.1]{PSZ} that $\cd_p(X)$ depends in an explicit monotonic way
on a certain discrete invariant $J_p(G)$ of $G$ (the~\emph{$J$-invariant}). By
\cite[Corollary~5.19]{PSZ} this invariant is the same for the group $G$
itself and the derived subgroup $H'$ of any parabolic subgroup of $G$.
Also, if a group $H$ corresponds to a generic torsor, $J_p(G)$ takes
the maximal possible value, which is computed in \cite[Example~4.7]{PSZ}.

Now assume that the anisotropic kernel $H'$ of $G$
is less than $H$. From the one hand side, $\cd_p(X)$ can be computed in terms
of $J_p(H)$, which is known, since $H$ is generic. From the other hand side, it
can be computed in terms of $J_p(H')$, which does not exceed the known maximal
possible value. If these values are distinct, we get a contradiction. Looking
at the following table we see that for any two indices of the same type one can
find $p$ such that the maximal possible values of $\cd_p(X)$ differ, and we are
done.

\medskip

{\renewcommand{\arraystretch}{1.3}

\begin{tabular}{l|lcl|l}
Index&Maximal value of $\cd_p(X)$&&Index&Maximal value of $\cd_p(X)$\\
\cline{1-2}\cline{4-5}
${}^1E_{6,\,0}^{78}$&$3$, $p=2$; $16$, $p=3$&             &$E_{8,\,0}^{248}$&$60$, $p=2$; $28$, $p=3$; $24$, $p=5$\\
${}^1E_{6,\,2}^{28}$&$3$, $p=2$&                          &$E_{8,\,1}^{133}$&$17$, $p=2$; $8$, $p=3$\\
${}^1E_{6,\,2}^{16}$&$2$, $p=3$&                          &$E_{8,\,1}^{91}$&$14$, $p=2$\\
${}^1E_{6,\,6}^{0}$&$0$&                                  &$E_{8,\,2}^{78}$&$3$, $p=2$; $8$, $p=3$\\
\cline{1-2}
$E_{7,\,0}^{133}$&$18$, $p=2$; $8$, $p=3$&                &$E_{8,\,2}^{66}$&$8$, $p=2$\\
$E_{7,\,1}^{78}$&$3$, $p=2$; $8$, $p=3$&                  &$E_{8,\,4}^{28}$&$3$, $p=2$\\
$E_{7,\,1}^{66}$&$9$, $p=2$&                              &$E_{8,\,8}^{0}$&$0$\\
\cline{4-5}
$E_{7,\,1}^{48}$&$10$, $p=2$&                             &$F_{4,\,0}^{52}$&$3$, $p=2$; $8$, $p=3$\\
$E_{7,\,2}^{31}$&$6$, $p=2$&                              &$F_{4,\,1}^{21}$&$3$, $p=2$\\
$E_{7,\,3}^{28}$&$3$, $p=2$&                              &$F_{4,\,4}^{0}$&$0$\\
\cline{4-5}
$E_{7,\,4}^{9}$&$1$, $p=2$&                               &$G_{2,\,0}^{14}$&$3$, $p=2$\\
$E_{7,\,7}^{0}$&$0$&                                      &$G_{2,\,2}^{0}$&$0$
\end{tabular}
}



\end{proof}\medskip

\end{document}